\title{\vspace{-0.6cm}Random directed graphs are robustly Hamiltonian}
\author{{Dan Hefetz \thanks{School of Mathematics, University of Birmingham, Edgbaston, Birmingham B15 2TT, United
Kingdom. Email: d.hefetz@bham.ac.uk. Research supported by EPSRC grant EP/K033379/1.}}
\quad {Angelika Steger \thanks{Institute of Theoretical Computer Science, ETH Z\"urich, CH-8092 Switzerland, Email: steger@inf.ethz.ch.}}
\quad {Benny Sudakov \thanks{
Department of Mathematics, ETH, 8092 Zurich.
Email: benjamin.sudakov@math.ethz.ch.
Research supported in part by SNSF grant 200021-149111 and
by a USA-Israel BSF grant.}}}
\newif\ifnotesw\noteswtrue
\def\({\left(}
\def\){\right)}
\def\deg{\text{deg}}
\newtheorem{theorem}{Theorem}[section]
\newtheorem{lemma}[theorem]{Lemma}
\newtheorem{claim}[theorem]{Claim}
\newtheorem{proposition}[theorem]{Proposition}
\newtheorem{observation}[theorem]{Observation}
\newtheorem{corollary}[theorem]{Corollary}
\newtheorem{question}[theorem]{Question}
\newtheorem{remark}[theorem]{Remark}
\newtheorem{definition}[theorem]{Definition}
\renewcommand{\epsilon}{\varepsilon}
\newenvironment{proof}{\noindent{\bf Proof\,}}{\hfill$\Box$}
\date{}
\begin{document}

\maketitle

\begin{abstract}
A classical theorem of Ghouila-Houri from 1960 asserts that every directed graph on $n$ vertices with minimum out-degree and in-degree at least $n/2$ contains a directed Hamilton cycle. In this paper we extend this theorem to a random directed graph ${\mathcal D}(n,p)$, that is, a directed graph in which every ordered pair $(u,v)$ becomes an arc with probability $p$ independently of all other pairs. Motivated by the study of resilience of properties of random graphs, we prove that if $p \gg \log n/\sqrt{n}$, then a.a.s. every subdigraph of ${\mathcal D}(n,p)$ with minimum out-degree and in-degree at least $(1/2 + o(1)) n p$ contains a directed Hamilton cycle. The constant $1/2$ is asymptotically best possible. Our result also strengthens classical results about the existence of directed Hamilton cycles in random directed graphs.
\end{abstract}

\section{Introduction} \label{sec::intro}

A Hamilton cycle of a graph $G$ is a cycle which passes through every vertex of $G$ exactly once. A graph is said to be 
\emph{Hamiltonian} if it admits a Hamilton cycle. Hamiltonicity is one of the most central notions in graph theory, and has been 
intensively studied by numerous researchers for many years. The problem of deciding whether a graph is Hamiltonian or not is one of the 
NP-complete problems that Karp listed in his seminal paper~\cite{Karp}, and, accordingly, one cannot hope for a simple classification of 
such graphs. It is thus important to find generally applicable sufficient conditions for graphs to be Hamiltonian and in the last 60 years 
many interesting results were obtained in this direction. One of the first results of this type is the celebrated theorem of 
Dirac~\cite{Dirac}, asserting that every graph on $n \geq 3$ vertices with minimum degree at least $n/2$ (such graphs are called Dirac graphs) is Hamiltonian.

Dirac's Theorem provides a natural and easy to check sufficient condition for the Hamiltonicity of graphs with very high minimum degree. 
On the other hand, there are of course Hamiltonian graphs with minimum degree 2. Therefore, while Dirac's Theorem is sharp in general, 
one would like to have sufficient conditions for the Hamiltonicity of sparser graphs. When looking for such sufficient conditions, it is 
natural to consider random graphs with an appropriate edge probability. Erd\H{o}s and R\'enyi~\cite{ER} raised the question of what the 
threshold probability of Hamiltonicity in random graphs is. After a series of efforts by various researchers, including 
Korshunov~\cite{Korshunov} and P\'osa~\cite{Posa}, the problem was finally solved by Koml\'os and Szemer\'edi~\cite{KS} and 
independently by Bollob\'as~\cite{Bollobas}, who proved that if $p \geq (\log n + \log \log n + \omega(1))/n$, where $\omega(1)$ tends 
to infinity with $n$ arbitrarily slowly, then $G(n,p)$ is asymptotically almost surely (or a.a.s. for brevity) Hamiltonian. Note that 
this is best possible since for $p \leq (\log n + \log \log n - \omega(1))/n$ a.a.s. there are vertices of degree at most one in 
$G(n,p)$ (see, e.g.~\cite{BolBook}). An even stronger result was obtained by Bollob\'as~\cite{Bollobas}. He proved that for the random 
graph process, the hitting time for Hamiltonicity is exactly the same as the hitting time for having minimum degree 2, that is, a.a.s. 
the very edge which increases the minimum degree to 2 also makes the graph Hamiltonian.

In recent years there has been a lot of  interest in proving that certain families of graphs, like Dirac graphs or random graphs, are Hamiltonian in some robust sense. Results in this direction include showing that such graphs admit not only one Hamilton cycle but many (see, e.g.~\cite{CF, Janson, CK, GK}), that they admit many pairwise edge-disjoint Hamilton cycles (see, e.g.~\cite{KnKO, KSa, FK2, KLO, FKS}), that a player can claim all edges of a Hamilton cycle of these graphs, even when facing an optimal adversary (see, e.g.~\cite{SS, HKSS, BFHK, KLS, FGKN}), and many more.

The measure of how robust a graph is with respect to Hamiltonicity that we use in this paper is via the notion of \emph{local resilience}, which was introduced by Vu and the third author in~\cite{SV}. Let $G$ be a simple graph and let ${\mathcal P}$ be a monotone 
increasing graph property. The \emph{local resilience} of $G$ with respect to ${\mathcal P}$, denoted by $r_{\ell}(G, {\mathcal P})$, is the smallest non-negative integer $r$ such that one can obtain a graph which does not satisfy ${\mathcal P}$ by deleting at most $r$ edges at every vertex of $G$. Namely,

\vspace*{-0.7cm}

\begin{eqnarray*}
r_{\ell}(G, {\mathcal P}) = \min \{r : \exists H \subseteq G \textrm{ such that } \Delta(H) = r \textrm{ and } G \setminus H \notin 
{\mathcal P}\} \,.
\end{eqnarray*}     

\vspace*{-0.15cm}

Let ${\mathcal H}$ denote the graph property of being Hamiltonian. Using the notion of local resilience, one can restate the 
aforementioned results of Dirac~\cite{Dirac} as $r_{\ell}(K_n, {\mathcal H}) = \lfloor n/2 \rfloor = (1/2 + o(1)) n$. Following a series of results (see~\cite{SV, FK, BKS1, BKS2}), it was proved by Lee and Sudakov~\cite{LS} that a.a.s. $r_{\ell}(G(n,p), {\mathcal H}) = (1/2 + o(1)) n p$ for every $p \gg \log n/n$. This is a far reaching generalization of Dirac's Theorem, since a complete graph is also a random graph $G(n,p)$ with $p=1$.
 
In this paper we aim to prove analogous results for directed graphs (or digraphs for brevity). Similarly to the case of undirected 
graphs, we define the local resilience of a digraph $D$ with respect to a monotone increasing digraph property ${\mathcal P}$ to be the 
smallest non-negative integer $r$ such that one can obtain a digraph which does not satisfy ${\mathcal P}$ by deleting at most $r$ 
out-going arcs and at most $r$ in-going arcs at every vertex of $D$. Namely,
\begin{eqnarray*}
r_{\ell}(D, {\mathcal P}) = \min \{r : \exists H \subseteq D \textrm{ such that } \Delta^+(H) \leq r, \Delta^-(H) \leq r \textrm{ and } 
D \setminus H \notin {\mathcal P}\} \,.
\end{eqnarray*}

\vspace*{-0.15cm}

For a positive integer $n$ and $0 \leq p = p(n) \leq 1$, let ${\mathcal D}(n,p)$ denote the probability space of random labeled 
\emph{directed} graphs with vertex set $[n] := \{1,2, \ldots, n\}$. That is, for every ordered pair $(u,v)$ with $1 \leq u \neq v \leq 
n$ we flip a coin, all coin flips being mutually independent. With probability $p$ we include the arc $(u,v)$ in our digraph and with probability $1-p$ we do not. By abuse of notation we sometimes use ${\mathcal D}(n,p)$ to denote a single element of the space ${\mathcal D}(n,p)$. We also use ${\mathcal H}$ to denote the \emph{digraph} property of being Hamiltonian, that is, a digraph $D$ satisfies $D \in {\mathcal H}$ if and only if $D$ admits a directed Hamilton cycle.

Similarly to the aforementioned results of Koml\'os and Szemer\'edi and of Bollob\'as regarding the Hamiltonicity of random undirected 
graphs, results of McDiarmid~\cite{McDiarmid} and Frieze~\cite{Frieze} imply that a.a.s. $\mathcal D(n,p)$ is Hamiltonian for every $p \geq (1 + o(1)) \log n /n$ but admits no Hamilton cycles when $p \leq (1 - o(1)) \log n /n$. Moreover, a classical analog of Dirac's Theorem for directed graphs was proved in 1960 by Ghouila-Houri~\cite{GH}. It asserts that every directed graph on $n$ vertices with minimum out-degree and in-degree at least $n/2$ contains a directed Hamilton cycle. Stating this result in terms of local resilience we have that $r_{\ell}({\mathcal D(n,1)}, {\mathcal H}) = \lfloor n/2 \rfloor$. Hence it is natural to ask whether one can generalize the theorem of Ghouila-Houri to sparse random directed graphs, similar to the generalization of Dirac's Theorem proved by Lee and Sudakov in~\cite{LS}. In this paper we obtain such a result for every $p$ which is not too small.

\begin{theorem} \label{directedHam}
For every fixed $\alpha > 0$, if the arc probability of the random digraph ${\mathcal D}(n,p)$ satisfies $\log n/\sqrt{n} \ll p = p(n) \leq 1$, then a.a.s.
$$
(1/2 - \alpha) n p \leq r_{\ell}({\mathcal D}(n,p), {\mathcal H}) \leq (1/2 + \alpha) n p.
$$
\end{theorem}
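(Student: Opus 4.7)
The plan is to prove the two inequalities separately; the upper bound on $r_\ell$ follows from a simple construction, while all the work lies in the lower bound.

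For the upper bound $r_\ell(\mathcal{D}(n,p),\mathcal{H}) \le (1/2+\alpha)np$, I would fix an equipartition $V = A \cup B$ with $|A|=\lfloor n/2\rfloor$, $|B|=\lceil n/2\rceil$ and let $H$ consist of all arcs of $\mathcal{D}(n,p)$ whose endpoints lie in different parts. Then $\mathcal{D}(n,p)\setminus H$ decomposes into two vertex-disjoint digraphs on $A$ and $B$, so it cannot contain a directed Hamilton cycle. For every vertex $v$, the number of crossing out-arcs, and of crossing in-arcs, incident to $v$ is binomially distributed with mean $(1/2+o(1))np$, so Chernoff together with a union bound over the $n$ vertices (using $np \gg \log n$) yields $\Delta^+(H), \Delta^-(H) \le (1/2+\alpha)np$ a.a.s.

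For the lower bound $r_\ell \ge (1/2-\alpha)np$, I would show that a.a.s.\ every spanning subdigraph $D' \subseteq \mathcal{D}(n,p)$ with $\delta^+(D'), \delta^-(D') \ge (1/2+\alpha/2)np$ is Hamiltonian. First I would record the pseudo-random properties of $\mathcal{D}(n,p)$ that hold a.a.s.: all in/out-degrees are $(1\pm o(1))np$, and for every pair of vertex sets $S,T$ with $|S||T|p$ sufficiently large, the number of arcs from $S$ to $T$ is $(1\pm o(1))|S||T|p$. A Chernoff plus union-bound over pairs of sets of size $\Theta(\sqrt n)$ is precisely where the threshold $p \gg \log n/\sqrt n$ becomes tight, since the count $|S||T|p$ must dominate $2|S|\log(n/|S|)$. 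These properties imply, for every admissible $D'$, that both $D'$ and its reverse are expanders: every $S\subseteq V$ with $|S| \le n/2$ satisfies $|N^+_{D'}(S)\setminus S|, |N^-_{D'}(S)\setminus S| \ge (1+\Omega(\alpha))|S|$.

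With expansion in hand, I would apply the absorbing method adapted to directed graphs. The steps are: (i) build a short absorbing directed path $P_0$ with endpoints $a,b$ such that for every small $U \subseteq V \setminus V(P_0)$ there is an $(a,b)$-path in $D'$ covering exactly $V(P_0) \cup U$, by assembling per-vertex absorber gadgets found greedily via the bipartite-count concentration; (ii) in $D'\setminus V(P_0)$, build a path $Q$ covering $(1-o(1))n$ vertices by a directed P\'osa rotation-extension argument, maintaining a pair of endpoint sets reachable by iterated out- and in-rotations and using expansion to show these sets grow until an extension or a cycle closure is forced; (iii) connect $Q$ and $P_0$ through short $D'$-paths supplied by expansion, and absorb the residual vertices into $P_0$ to close a Hamilton cycle.

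The main obstacle I expect is step (ii): P\'osa rotation-extension is inherently more delicate in digraphs than in graphs, since rotating at the final vertex of a path uses out-arcs while the symmetric argument at the initial vertex requires in-arcs, and the two endpoint-set families have to be tracked and grown together. A secondary challenge is ensuring that the absorber, the long path, and the connectors use essentially disjoint portions of the arc set of $D'$; this is typically handled by reserving a small random arc reservoir for each stage, but in the directed setting one has to split both in- and out-arc reservoirs consistently. Threading $p \gg \log n/\sqrt n$ through every union bound, in particular over rotation families of size roughly $n^{\Theta(\sqrt n)}$, will demand careful bookkeeping but no fundamentally new idea.
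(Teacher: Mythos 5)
Your upper bound is exactly the paper's construction and is fine. The lower bound, however, rests on a step that does not work in directed graphs: P\'osa rotation-extension. In an undirected graph, if a path $P$ ends at $v$ and $v$ is adjacent to an interior vertex $u$ of $P$, one obtains a new path on the same vertex set with a new endpoint. In a directed path $x_1 \to x_2 \to \cdots \to x_k$, if $(x_k, x_i) \in E$ with $i<k$, the ``rotated'' walk $x_1 \to \cdots \to x_i \to x_k \to x_{k-1} \to \cdots \to x_{i+1}$ requires the \emph{reversed} arcs $(x_k,x_{k-1}),\ldots,(x_{i+2},x_{i+1})$, which need not exist. This is precisely why the combinatorics of directed Hamiltonicity is qualitatively harder than the undirected case, and why neither the original P\'osa machinery nor the Lee--Sudakov resilience proof for $G(n,p)$ (which relies on it) transfers. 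Your phrase ``more delicate'' undersells this: you would need to invent a genuinely new rotation mechanism, and it is not clear one exists in this form. Relatedly, the assertion that $\delta^\pm(D') \geq (1/2+\alpha/2)np$ yields $|N^\pm_{D'}(S)\setminus S| \geq (1+\Omega(\alpha))|S|$ for \emph{every} $S$ with $|S|\le n/2$ is not justified as stated; it is plausible for $|S|$ in a moderate range by discrepancy arguments, but the small-$S$ and near-$n/2$ boundary cases need a proof, and the whole expansion framework is moot if step (ii) collapses.

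The paper takes a completely different route. It applies the Sparse Diregularity Lemma (Kohayakawa) to $D'$, shows via a degree-counting argument that the regularity digraph $R$ retains a subgraph with in- and out-degrees at least half its order, invokes Ghouila-Houri's theorem to find an almost-spanning cycle $C_R$ of $R$, and then builds an almost-spanning directed cycle $C_1$ of $D'$ by walking around the blow-up of $C_R$, choosing successor vertices at random to guarantee, for every leftover vertex $u$, many arcs $(x,y)$ of $C_1$ with $(x,u),(u,y)\in E(D')$. Absorption of the leftover vertices is then done by a bipartite matching argument (Hall's theorem plus a discrepancy lemma), not by an absorbing path. The bottleneck $p \gg \log n/\sqrt n$ arises from this last step: one needs $\Theta(p^2 n)$ arc-disjoint ``triangles'' per leftover vertex, and the relevant discrepancy bound requires $p^2 n \gg \log n$. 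Your proposed explanation for the threshold (union bound over pairs of $\Theta(\sqrt n)$-sets) is not where the constraint actually enters in the paper's proof. In short: your high-level plan (pseudorandomness, absorption, long path, threshold bookkeeping) is sensible, but the core engine you propose for building the long path is broken in the directed setting, and without it the rest does not assemble.
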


\subsection{Notation and preliminaries}

\noindent For the sake of simplicity and clarity of presentation, we do not make a par\-ti\-cu\-lar effort to optimize some of the 
constants obtained in our proofs. We also omit floor and ceiling signs whenever these are not crucial. Most of our results are 
asymptotic in nature and whenever necessary we assume that $n$ is sufficiently large. Throughout the paper, $\log$ stands for the 
natural logarithm, unless explicitly stated otherwise. We say that a graph property ${\mathcal P}$ holds \emph{asymptotically almost 
surely}, or a.a.s. for brevity, if the probability of satisfying ${\mathcal P}$ tends to 1 as the number of vertices $n$ tends to infinity. Our graph-theoretic notation is standard and follows that of~\cite{West}. In particular, we use the following.

For a directed graph (or \emph{digraph} for brevity) $D$, let $V(D)$ and $E(D)$ denote its sets of vertices and arcs respectively, and 
let $v(D) = |V(D)|$ and $e(D) = |E(D)|$. For a set $A \subseteq V(D)$, let $E_D(A)$ denote the set of arcs of $D$ with both endpoints in 
$A$, and let $e_D(A) = |E_D(A)|$. For disjoint sets $A,B \subseteq V(D)$, let $E_D(A,B)$ denote the set of arcs of $D$ which are 
oriented from some vertex of $A$ towards some vertex of $B$, and let $e_D(A,B) = |E_D(A,B)|$. Let $d_D(A,B) = \frac{e_D(A,B)}{|A||B|}$ 
denote the \emph{directed density} of the ordered pair $(A,B)$ in $D$. For a vertex $u \in V(D)$ and a set $Y \subseteq V(D)$ let 
$N^+_D(u,Y) = \{y \in Y : (u,y) \in E(D)\}$ denote the set of \emph{out-neighbors} of $u$ in $Y$, and let $\deg^+_D(u,Y) = 
|N^+_D(u,Y)|$. Similarly, let $N^-_D(u,Y) = \{y \in Y : (y,u) \in E(D)\}$ denote the set of \emph{in-neighbors} of $u$ in $Y$, and let 
$\deg^-_D(u,Y) = |N^-_D(u,Y)|$. Let $\deg_D(u,Y) = \deg^+_D(u,Y) + \deg^-_D(u,Y)$. We abbreviate $\deg^+_D(u,V(D))$ to $\deg^+_D(u)$, 
$\deg^-_D(u,V(D))$ to $\deg^-_D(u)$, and $\deg_D(u,V(D))$ to $\deg_D(u)$. Moreover, $\deg^+_D(u)$ is referred to as the out-degree of 
$u$, $\deg^-_D(u)$ is referred to as the in-degree of $u$, and $\deg_D(u)$ is referred to as the degree of $u$. Let $\delta^+(D) = 
\min_{u \in V(D)} \deg^+_D(u), \; \delta^-(D) = \min_{u \in V(D)} \deg^-_D(u)$ and $\delta(D) = \min_{u \in V(D)} \deg_D(u)$ denote the 
minimum out-degree of $D$, the minimum in-degree of $D$, and the minimum degree of $D$, respectively. Similarly, let $\Delta^+(D) = 
\max_{u \in V(D)} \deg^+_D(u), \; \Delta^-(D) = \max_{u \in V(D)} \deg^-_D(u)$ and $\Delta(D) = \max_{u \in V(D)} \deg_D(u)$ denote the 
maximum out-degree of $D$, the maximum in-degree of $D$, and the maximum degree of $D$, respectively. For disjoint sets $A, B \subseteq 
V(D)$, let $N^+_D(A,B) = \bigcup_{a \in A} N^+_D(a,B)$, and similarly let $N^-_D(A,B) = \bigcup_{a \in A} N^-_D(a,B)$. Sometimes, if 
there is no risk of confusion, we discard the subscript $D$ in the above notation.

Throughout the paper we will make use of the following well-known bounds on the lower and upper tails of the binomial distribution due to Chernoff (see e.g.~\cite{JLR}).
\begin{theorem}[Chernoff bounds] \label{th::Chernoff}
Let $X \sim Bin(n,p)$, let $0 \leq \varepsilon \leq 1$ and let $x \geq 7 n p$. Then
\begin{description}
\item [(i)] $Pr (X \leq (1-\varepsilon) n p) \leq \exp \left\{-\frac{\varepsilon^2 n p}{2} \right\}$.
\item [(ii)] $Pr (X \geq (1+\varepsilon) n p) \leq \exp\left\{-\frac{\varepsilon^2 n p}{3} \right\}$.
\item [(iii)] $Pr (|X - n p| \geq \varepsilon n p) \leq 2 \exp\left\{-\frac{\varepsilon^2 n p}{3} \right\}$.
\item [(iv)] $Pr(X \geq x) \leq e^{-x}$.
\end{description}
\end{theorem}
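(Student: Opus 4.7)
The plan is to prove the two bounds separately.

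\textbf{Upper bound.} I will exhibit an explicit obstruction witnessing $r_\ell(\mathcal D(n,p),\mathcal H)\le (1/2+\alpha)np$. Fix a set $B\subseteq [n]$ with $|B|=\lceil n/2\rceil+1$ and let $H$ be the subdigraph of $\mathcal D(n,p)$ consisting of all arcs with both endpoints in $B$. For each vertex $v$, $\deg^+_H(v)$ and $\deg^-_H(v)$ are $\mathrm{Bin}(|B|-1,p)$ or $\mathrm{Bin}(|B|,p)$ variables with mean at most $(1/2+o(1))np$, so Theorem~\ref{th::Chernoff}(iii) with $\varepsilon=\alpha/2$ and a union bound over $n$ vertices (the tail exponent is $\Theta(\alpha^2 np)\gg \log n$ because $p\gg \log n/\sqrt n \gg \log n/n$) yield $\Delta^+(H),\Delta^-(H)\le (1/2+\alpha)np$ a.a.s. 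Yet $\mathcal D(n,p)\setminus H$ cannot contain a directed Hamilton cycle, because such a cycle would list the vertices of $B$ cyclically with no two of them consecutive, forcing $|V\setminus B|\ge |B|$, contradicting $|B|=\lceil n/2\rceil+1$.

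\textbf{Lower bound.} This is the main content. It suffices to prove that a.a.s.\ every spanning subdigraph $D'$ of $\mathcal D(n,p)$ with $\delta^+(D'),\delta^-(D')\ge (1/2+\alpha/2)np$ contains a directed Hamilton cycle: indeed, by Theorem~\ref{th::Chernoff} every vertex of $\mathcal D(n,p)$ a.a.s.\ has in- and out-degree $(1+o(1))np$, so deleting $\le(1/2-\alpha)np$ in- and out-arcs per vertex leaves a digraph with the above minimum degree. I will combine pseudorandom properties of $\mathcal D(n,p)$ with a directed Pósa-style rotation-extension argument. The pseudorandom properties to be proved via Chernoff and union bounds, all a.a.s.\ under $p\gg\log n/\sqrt n$, are:
\begin{description}
\item[(P1)] every vertex has in- and out-degree $(1\pm o(1))np$;
\item[(P2)] every pair of disjoint sets $U,W\subseteq V$ with $|U|,|W|\ge n/\log n$ satisfies $e_{\mathcal D(n,p)}(U,W)=(1\pm o(1))|U||W|p$;
\item[(P3)] for every pair of vertices $u,v$ and every set $W$ with $|W|\ge n/\log n$, the number of directed 2-paths $u\to w\to v$ with $w\in W$ is $(1\pm o(1))|W|p^2$, which is $\omega(\log n)$ since $|W|p^2\gg (n/\log n)(\log^2 n/n)=\log n$.
\end{description}

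Granted these, the proof proceeds as follows. Let $P=x_1\to\cdots\to x_k$ be a longest directed path in $D'$. Using (P1), (P2) and the min-degree hypothesis one first argues $k\ge n-o(n)$: all in-neighbors of $x_1$ and all out-neighbors of $x_k$ lie on $P$, and if $V\setminus V(P)$ were of size $\Omega(n)$, (P2) would yield an extending arc. Perform directed Pósa rotations at the terminal endpoint: each arc $(x_k,x_i)$ with $i<k$ produces a new path $x_1\cdots x_{i-1}x_k x_{k-1}\cdots x_i$ of the same length ending at $x_{i-1}$. Iterating, and using (P2) to keep the ``boosted'' endpoint set growing, yields a set $T^+\subseteq V(P)$ of achievable terminal endpoints with $|T^+|=\Omega(n)$. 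For each $y\in T^+$, performing symmetric rotations at the starting end produces a set $T^-(y)\subseteq V(P)$ of size $\Omega(n)$ of achievable starting vertices for a path of the same vertex set ending at $y$. By (P2) there is an arc from some $y$ to some element of $T^-(y)$, producing a directed cycle $C$ on $V\setminus S$ for a set $S$ with $|S|=o(n)$. Finally, absorb $S$ into $C$: for each $s\in S$, (P3) supplies $\omega(\log n)$ choices of arcs $(u,v)\in C$ such that $(u,s),(s,v)\in D'$, and a straightforward greedy / matching argument lets us splice every $s\in S$ into $C$, producing a directed Hamilton cycle in $D'$.

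\textbf{Main obstacle.} The hardest part will be running the directed rotation-extension so that \emph{both} the terminal and initial endpoint sets remain $\Omega(n)$ simultaneously while maintaining the pseudorandom hypotheses along the rotated paths. Unlike the undirected setting, a rotation at the tail only consumes out-arcs of the current tail and the rotated path structure interacts asymmetrically with (P2) applied to in- vs.\ out-neighborhoods. Moreover, the assumption $p\gg \log n/\sqrt n$ is exactly the threshold that makes property (P3) — the existence of many 2-paths through any linear-sized set — just barely hold; consequently the absorbing step, and any argument requiring short directed connections between arbitrary pairs, must be calibrated carefully against this density.
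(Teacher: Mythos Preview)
Your proposal addresses the wrong statement. The theorem labeled \texttt{th::Chernoff} is the standard Chernoff bound for binomial random variables; the paper does not prove it but merely cites it from~\cite{JLR}. What you have written is instead a proof sketch for the paper's main result, Theorem~\ref{directedHam}. If the task is literally to prove the stated Chernoff bounds, the appropriate response is either a short derivation from the exponential moment method or, as the paper does, a citation.

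Since you have in effect sketched an alternative proof of Theorem~\ref{directedHam}, let me comment on that. Your upper bound construction (delete all arcs internal to a set $B$ of size $\lceil n/2\rceil+1$) is a valid alternative to the paper's bipartition argument and gives the same conclusion. The lower bound, however, has a genuine gap. Directed P\'osa rotation does not work as you describe: from a directed path $x_1\to\cdots\to x_k$ and an arc $x_k\to x_i$, you \emph{cannot} form the path $x_1\cdots x_{i-1}x_kx_{k-1}\cdots x_i$, because the segment $x_k x_{k-1}\cdots x_i$ would require the reversed arcs $x_j\to x_{j-1}$, which you do not have. This is precisely the well-known obstruction that makes directed Hamiltonicity substantially harder than the undirected case, and it is why the paper does not use rotation--extension at all. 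Instead, the paper applies the sparse diregularity lemma to $D'$, finds a long cycle in the reduced digraph via Ghouila-Houri, builds an almost-spanning directed cycle by a carefully randomized walk along the regular pairs (controlling several classes of ``bad'' vertices via Lemmas~\ref{badConfigurationsI} and~\ref{badConfigurationsII}), and finally absorbs the leftover vertices using a Hall-type matching argument (Lemmas~\ref{lem::edgeDistribution} and~\ref{lem::matchingStage4}). Your property (P3) and absorbing step are in the same spirit as this last stage, but without a working mechanism to produce the long cycle in the first place, the argument does not go through.
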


The rest of this paper is organized as follows. In Section~\ref{sec::outline} we briefly outline some of the main ideas of our proof of Theorem~\ref{directedHam}. In Section~\ref{sec::directRegularity} we discuss some tools that will be used in the proof of Theorem~\ref{directedHam}, most notably, the sparse diregularity Lemma. In Section~\ref{sec::randomDigraphs} we prove various properties of random directed graphs that will be used in the proof of Theorem~\ref{directedHam}. In Section~\ref{sec::proofMain} we prove Theorem~\ref{directedHam}. Finally, in Section~\ref{sec::openprob} we present some open problems.

\section{A short outline of the proof of our main result}
\label{sec::outline}
Since our proof of the lower bound in Theorem~\ref{directedHam} is quite involved, we first sketch very briefly some of its main ideas. Some of the concepts and tools we use, will only be stated precisely and proved in the following sections. We start by considering the case of constant $p$ and then describe some of the additional difficulties which arise if one allows $p = o(1)$.

Let $D \in {\mathcal D}(n,p)$ and let $D'=(V,E)$ be a digraph obtained from $D$ by deleting at each vertex $u \in V(D)$ at most $(1/2 - \alpha)\deg_D^+(u)$ out-going arcs and at most $(1/2 - \alpha)\deg_D^-(u)$ in-going arcs. Note that both the out-degree and the in-degree in $D$ of every vertex $u \in V$ is a.a.s. roughly $n p$ and therefore both the out-degree and the in-degree in $D'$ of every vertex $u \in V$ is a.a.s. at least $(1/2 + \alpha - o(1)) n p$.

Apply the Directed Regularity Lemma (see Section~\ref{sec::directRegularity} for more details) to $D'=(V,E)$ with appropriate parameters. Let $\{V_0, V_1, \ldots, V_k\}$ be the corresponding $\varepsilon$-regular partition. For some appropriately chosen $\delta > 0$ let $R = R(D', \delta)$ be the corresponding regularity digraph; that is, the directed graph with vertex set $\{v_1, v_2, \ldots, v_k\}$ such that for every $1 \leq i \neq j \leq k$, $(v_i, v_j)$ is an arc of $R$ if and only if $(V_i, V_j)$ is $\varepsilon$-regular with directed density at least $\delta$.

We claim that $R$ contains a subgraph $R'$ on at least $(1 - \beta)k$ vertices and with minimum out-degree and in-degree at least $|R'|/2$. Indeed, suppose not. Then by recursively deleting vertices whose out-degree or in-degree is strictly smaller than half the number of vertices, we would delete at least $\beta k$ vertices. By symmetry we may assume that half of them have too small an in-degree. By assuming that $\varepsilon$ is sufficiently small compared to $\beta$ we see that the majority of these missing arcs correspond to $\varepsilon$-regular pairs and thus have to have density less than~$\delta$. If we now assume that $\beta$ is sufficiently small compared to $\alpha$, one can check that it follows that in order to obtain $D'$ from $D$ we have deleted strictly more  than $(1/2 - \alpha) \deg_D^-(u)$ in-going arcs at some vertex $u \in V(D)$, contrary to our assumption. By Ghouila-Houri's Theorem~\cite{GH} (see Theorem~\ref{th::CorOfGH}) the subgraph $R'$ is Hamiltonian. Equivalently, there exists an almost spanning cycle $C_R : v_1, v_2, \ldots, v_r, v_1$ of $R$. This corresponds to a directed ``cycle'' $C : V_1, V_2, \ldots, V_r, V_1$ of $D'$. Note that, by the definition of $R$, the pair $(V_i, V_{(i \mod r) + 1})$ is $\varepsilon$-regular with positive directed density for every $1 \leq i \leq r$.

In order to build a directed Hamilton cycle of $D'$, we will first build an almost spanning cycle $C_1$ and then absorb all the remaining vertices. In order to add some vertex $u$ to $C_1$ we will find an arc $(x,y) \in E(C_1)$ such that $(x,u) \in E(D')$ and $(u,y) \in E(D')$ and will then remove $(x,y)$ from $C_1$ and add to it $(x,u)$ and $(u,y)$. In order for this to work, when building $C_1$ we will have to ensure that there exists a pairing (in the sense suggested above) of all vertices of $V \setminus V(C_1)$ with certain arcs of $C_1$; we refer to this as our \emph{main task}. From now on we focus on building $C_1$.

We build $C_1$ by continuously moving around $C$ until we nearly exhaust all of the sets $V_i$. We always choose only \emph{nice} vertices, that is, vertices that have roughly the right number of out-neighbors in the next set along $C$. Thinking ahead to the moment at which we will want to close the directed path we are building into a directed cycle, we start building $C_1$ at a vertex $v_0 \in V_1$ which, in addition to being nice, is also \emph{backwards nice}, that is, it has roughly the right number of in-neighbors in $V_r$. We refrain from touching some predetermined subset of those in-neighbors until we attempt to close the directed path we built into a directed cycle.

In the digraph $D$ a \emph{typical} vertex $u$ has roughly $|V_i| p$ in-neighbors and roughly $|V_i| p$ out-neighbors in $V_i$ for every $1 \leq i \leq r$. A simple calculation shows that a.a.s. there is only a very small number of atypical vertices. Our first task is to build a directed path which includes all of these atypical vertices. Let $u$ be an arbitrary atypical vertex which we wish to add to the path we have built thus far. It follows by the aforementioned lower bounds on the minimum in-degree and the minimum out-degree of $D'$ that there must exist indices $1 \leq j_1 \neq j_2 \leq r$ such that $u$ has many in-neighbors in $V_{j_1}$ and many out-neighbors in $V_{j_2}$. We walk along $C$ (always choosing nice vertices as described above) until we reach $V_{j_1}$. Using regularity we can ensure that we enter $V_{j_1}$ in an in-neighbor of $u$. We can thus add $u$ to the path and proceed to a nice vertex of $V_{j_2}$. Once all atypical vertices are included in the path we focus on our main task.

We continue moving along $C$ as before except that, at every step, the new vertex we add to the path is chosen uniformly at random from all nice vertices. Using this randomness we wish to show that for every vertex $u$ which will not be included in $C_1$, there will be many times in which we claim an in-neighbor of $u$ followed by an out-neighbor of $u$; each such time is referred to as a \emph{successful trial}. In our analysis we use known bounds on the tail of the binomial distribution. Hence, in order to ensure the independence of trials which is needed for the binomial distribution and in order to bound from below the probability that a single trial is successful, we will only consider arcs of $C_1$ which are far from each other, as trials for each specific vertex $u \in V \setminus V(C_1)$.

Once the path we built is sufficiently long, we close it into a cycle. In order to do so we simply walk along the cycle $C$ until we reach $V_{r-2}$. Using regularity we can now extend the path by two more arcs such that the second arc touches some $x \in V_r$ which is an in-neighbor of $v_0$. Claiming $(x,v_0)$ completes the cycle $C_1$.

Our random procedure for building (the main part of) the directed path (see above) ensures that a.a.s. there will be strictly more than $|V \setminus V(C_1)|$ successful trials for every vertex $u \in V \setminus V(C_1)$. We can therefore greedily add all the vertices of $V \setminus V(C_1)$ to our directed cycle.

When $p = o(1)$ we use a sparse version of the Directed Regularity Lemma (see Lemma~\ref{diregularity}). The main difficulty in applying this lemma is that now $p \ll \varepsilon$. Hence, we have to ensure regularity of very small sets. This is done in Proposition~\ref{regularSmallSets} and Corollary~\ref{cor::regularityOfSmallPairs}. However we cannot ensure regularity of all small sets. This leads to the introduction of more atypical vertices (see the definitions of bad vertices of type I and II in Section~\ref{sec::randomDigraphs}). We will include some of these vertices in the initial segment of our directed path while intentionally avoiding others until the final stage of absorbing all remaining vertices. Since there are now many atypical vertices and, on the other hand, the size of a neighborhood of a vertex is very small, we might end up exhausting the neighborhood of some vertex already at an early stage. If such a vertex is not included in $C_1$ we might not be able to add it to the cycle. In order to avert this danger we will include all such vertices in the initial segment of our directed path as well. While our main task remains essentially the same, our success in fulfilling it will be much more limited. In particular, the number of successful trials per vertex of $V \setminus V(C_1)$ will be strictly smaller than $|V \setminus V(C_1)|$. Hence, in order to obtain the required pairing of all vertices of $V \setminus V(C_1)$ with certain arcs of $C_1$, we will use Hall's Theorem and the fact that $D'$ is a subdigraph of a random directed graph.

\section{The Sparse Diregularity Lemma}
\label{sec::directRegularity}

The Sparse Diregularity Lemma, due to Kohayakawa~\cite{Kohayakawa}, is a version of Szemer\'edi's Regularity Lemma (see~\cite{Szemeredi}) for sparse directed graphs. Before stating the lemma, we introduce the relevant terminology. Let $H$ be a directed bipartite graph with bipartition $V(H) = A \cup B$, let $0 < p \leq 1$ and let $\varepsilon > 0$. We say that the \emph{ordered} pair $(A,B)$ is \emph{$(\varepsilon, p)$-regular} if $|d_H(X,Y) - d_H(A,B)| \leq \varepsilon p$ holds for every $X \subseteq A$ and every $Y \subseteq B$ such that $|X| \geq \varepsilon |A|$ and $|Y| \geq \varepsilon |B|$. If $(A,B)$ is $(\varepsilon, d)$-regular, where $d = d_H(A,B)$, then we say that $(A,B)$ is $(\varepsilon)$-regular. The following two observations follow directly from the definition of $(\varepsilon, p)$-regularity.

\begin{observation} \label{obs::regularity1}
Let $\varepsilon_1 \leq \varepsilon_2$ and $p_1 \leq p_2$. If $(A,B)$ is $(\varepsilon_1, p_1)$-regular, then it is also $(\varepsilon_2, p_2)$-regular.  
\end{observation}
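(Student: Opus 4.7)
The plan is to unwind the definition of $(\varepsilon, p)$-regularity and observe that both the size condition on the witnessing pair $(X,Y)$ and the target deviation bound are monotone in the appropriate direction. Specifically, I would fix arbitrary $X \subseteq A$ and $Y \subseteq B$ satisfying the $(\varepsilon_2, p_2)$ admissibility conditions $|X| \geq \varepsilon_2 |A|$ and $|Y| \geq \varepsilon_2 |B|$, and show the required estimate $|d_H(X,Y) - d_H(A,B)| \leq \varepsilon_2 p_2$.

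The first step is to promote $(X,Y)$ to an admissible pair for the stronger hypothesis: since $\varepsilon_1 \leq \varepsilon_2$, one has $|X| \geq \varepsilon_2 |A| \geq \varepsilon_1 |A|$ and likewise $|Y| \geq \varepsilon_1 |B|$, so the pair $(X,Y)$ qualifies under the $(\varepsilon_1, p_1)$-regularity of $(A,B)$. Applying that hypothesis yields $|d_H(X,Y) - d_H(A,B)| \leq \varepsilon_1 p_1$. Chaining the trivial inequality $\varepsilon_1 p_1 \leq \varepsilon_2 p_2$, which is immediate from $\varepsilon_1 \leq \varepsilon_2$, $p_1 \leq p_2$, and the non-negativity of $\varepsilon_1$ and $p_2$, delivers the desired bound. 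There is no real obstacle: the observation is pure bookkeeping from the definition, recorded for later reference so that one can freely weaken either the accuracy parameter $\varepsilon$ or the density scale $p$ when applying regularity in subsequent arguments.
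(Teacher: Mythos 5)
Your proof is correct and is exactly the direct unwinding the paper has in mind; the paper records this observation without proof as "following directly from the definition," and your argument is precisely that routine verification.
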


\begin{observation} \label{obs::regularity2}
If $(A,B)$ is $(\varepsilon, p)$-regular and $d \leq p$ is a real number (which might depend on $A$ and $B$), then $(A,B)$ is also $(\varepsilon p/d, d)$-regular.
\end{observation}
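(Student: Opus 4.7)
The plan is to directly unfold both the hypothesis and the conclusion in terms of the definition of $(\varepsilon', p')$-regularity introduced just before the observation, and to check that every pair of subsets one is required to test in order to establish the conclusion is already covered by the hypothesis.

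Concretely, I would fix arbitrary $X \subseteq A$ and $Y \subseteq B$ satisfying $|X| \geq (\varepsilon p/d) |A|$ and $|Y| \geq (\varepsilon p/d) |B|$; these are precisely the pairs for which the definition of $(\varepsilon p/d, d)$-regularity demands a density estimate. The key observation is that because $d \leq p$, one has $\varepsilon p/d \geq \varepsilon$, so $|X| \geq \varepsilon |A|$ and $|Y| \geq \varepsilon |B|$. Consequently $(X,Y)$ lies in the scope of the assumed $(\varepsilon, p)$-regularity of $(A,B)$.

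Applying that hypothesis gives $|d_H(X,Y) - d_H(A,B)| \leq \varepsilon p$, and then rewriting the right-hand side as $(\varepsilon p/d)\cdot d$ matches exactly the bound required by the definition of $(\varepsilon p/d, d)$-regularity. Since $X$ and $Y$ were arbitrary subject to the new threshold, this establishes the claim.

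There is essentially no obstacle here: the proof is a one-line unpacking of definitions, and the only thing to be careful about is the direction of monotonicity. Raising the relative size threshold from $\varepsilon$ to $\varepsilon p/d$ shrinks the family of test pairs, so a hypothesis that controls the density of the larger family automatically controls the smaller one; this is the same phenomenon already recorded in Observation~\ref{obs::regularity1}.
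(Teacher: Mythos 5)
Your proof is correct and is exactly the straightforward definition-unpacking the paper has in mind (the paper states both observations follow directly from the definition and gives no proof). The key point that $d \le p$ implies $\varepsilon p/d \ge \varepsilon$, so the test pairs for $(\varepsilon p/d, d)$-regularity are a subset of those controlled by the hypothesis, and $(\varepsilon p/d)\cdot d = \varepsilon p$, is precisely what is needed.
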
      

Let $D=(V,E)$ be a digraph. A partition $\{V_0, V_1, \ldots, V_k\}$ of $V$ in which the, possibly empty, set $V_0$ has been singled out as an exceptional set, is called an $(\varepsilon, p)$-\emph{regular partition} if it satisfies the following conditions:
\begin{description}
\item [$(i)$] $|V_0| \leq \varepsilon |V|$;
\item [$(ii)$] $|V_1| = \ldots = |V_k|$;
\item [$(iii)$] all but at most $\varepsilon k^2$ of the pairs $(V_i, V_j)$, where $1 \leq i \neq j \leq k$, are $(\varepsilon, p)$-regular.
\end{description}

\begin{remark} \label{fewLowDegVertices}
It follows from Property $(iii)$ above that, if $\{V_0, V_1, \ldots, V_k\}$ is an $(\varepsilon, p)$-\emph{regular partition}, then there are at most $\sqrt{\varepsilon}k$ indices $1 \leq i \leq k$ for which there are at least $\sqrt{\varepsilon}k$ indices $1 \leq j \neq i \leq k$ such that $(V_i,V_j)$ is not $(\varepsilon, p)$-regular. Similarly, there are at most $\sqrt{\varepsilon}k$ indices $1 \leq i \leq k$ for which there are at least $\sqrt{\varepsilon}k$ indices $1 \leq j \neq i \leq k$ such that $(V_j, V_i)$ is not $(\varepsilon, p)$-regular. 
\end{remark}

Let $0 < \eta, p \leq 1$ and $L > 1$ be real numbers. A digraph $D=(V,E)$ is said to be \emph{$(\eta, L, p)$-bounded} if $e_D(A,B) \leq L p |A| |B|$ holds for every pair of disjoint sets $A,B \subseteq V$ such that $|A|, |B| \geq \eta |V|$.

\begin{remark} \label{$D(n,p)$IsBounded}
Let $n$ be a positive integer and let $p = p(n) \gg 1/n$. It is easy to see that ${\mathcal D}(n,p)$ is a.a.s. $(\eta, L, p)$-bounded for any fixed $0 < \eta \leq 1$ and $L > 1$.
\end{remark}

\begin{lemma} [Sparse Diregularity Lemma] \label{diregularity}
For every positive integer $m$, and every real numbers $\varepsilon > 0$ and $L > 1$, there exist integers $n_0 = n_0(m, \varepsilon, L)$ and $M = M(m, \varepsilon, L) \geq m$ and a real number $0 < \eta = \eta(m, \varepsilon, L) \leq 1$, such that for every $0 < p \leq 1$, every $(\eta, L, p)$-bounded digraph of order $n \geq n_0$ admits an $(\varepsilon, p)$-regular partition $\{V_0, V_1, \ldots, V_k\}$, where $m \leq k \leq M$.
\end{lemma}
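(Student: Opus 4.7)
The plan is to follow the standard Szemer\'edi--Kohayakawa energy increment template, adapted to the digraph setting by applying the argument independently to each of the two possible orientations of each pair (or equivalently, to the densities $d_D(V_i,V_j)$ viewed as an asymmetric $k\times k$ matrix). Since $(\varepsilon,p)$-regularity for the pair $(V_i,V_j)$ only concerns edges oriented from $V_i$ to $V_j$, the natural energy/index to use is
\[
\mathrm{ind}(\mathcal{P}) \;=\; \frac{1}{p^2}\sum_{1\leq i\neq j\leq k} \frac{|V_i||V_j|}{n^2}\, d_D(V_i,V_j)^2,
\]
where $\mathcal{P}=\{V_1,\dots,V_k\}$ is an equipartition of $V(D)\setminus V_0$. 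The key quantitative point is that the $(\eta,L,p)$-boundedness hypothesis will be used to ensure $\mathrm{ind}(\mathcal{P})\leq L^2$ (up to negligible corrections) whenever each class is large, so the total energy is bounded independently of $p$.

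First I would prove the refinement step: if $\mathcal{P}$ has more than $\varepsilon k^2$ ordered pairs that are not $(\varepsilon,p)$-regular, then there is a refinement $\mathcal{P}'$ (still an equipartition modulo an exceptional set, with each $V_i$ split into at most $2^{2k}$ pieces) satisfying $\mathrm{ind}(\mathcal{P}')\geq \mathrm{ind}(\mathcal{P})+\varepsilon^5$. The argument is the usual defect Cauchy--Schwarz, carried out for ordered pairs: for each non-regular ordered pair $(V_i,V_j)$, witnesses $X\subseteq V_i$, $Y\subseteq V_j$ with density far from $d(V_i,V_j)$ by more than $\varepsilon p$ give a local energy gain of at least $\varepsilon^2 p^2 |X||Y|/(|V_i||V_j|)$, contributing (after dividing by $p^2$) a fixed constant to the total index. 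Summing over all irregular pairs and using a common refinement of all witnesses $X,Y$ yields the stated increment. The only new ingredient relative to the dense case is the normalization by $p^2$, which is exactly compensated by the fact that the irregularity witnesses themselves are scaled by $p$.

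Next, I would iterate. Starting from any equipartition $\mathcal{P}_0$ with $m$ parts (and an exceptional $V_0$ collecting the remainder, $|V_0|<m$), repeatedly apply the refinement step. Since $0\leq \mathrm{ind}\leq L^2+o(1)$ (the upper bound uses $(\eta,L,p)$-boundedness to give $d_D(V_i,V_j)\leq Lp$ whenever $|V_i|,|V_j|\geq \eta n$), the procedure terminates after at most $t:=\lceil L^2/\varepsilon^5\rceil+1$ steps, producing the sought $(\varepsilon,p)$-regular partition. Choosing $M:=m\cdot 2^{2m\cdot 2^{2m\cdot \ldots}}$ ($t$-fold tower) bounds the number of classes; choosing $\eta\leq 1/(2M)$ guarantees that during the entire procedure every non-exceptional class has size at least $\eta n$, so the boundedness hypothesis legitimately bounds every $d_D(V_i,V_j)$ encountered. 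The exceptional set absorbs the $<k$ leftover vertices from each rebalancing step, and staying within $|V_0|\leq\varepsilon n$ follows by choosing $n_0$ large enough that the cumulative loss $t\cdot M<\varepsilon n$.

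The main obstacle, and the place where the sparse setting genuinely diverges from the dense proof, is the interaction between $\eta$, $M$, and the density bound: one must fix $M$ (via the tower construction driven by $L^2/\varepsilon^5$ iterations) \emph{before} choosing $\eta$, so that even the smallest classes produced during the iteration still satisfy $|V_i|\geq\eta n$ and hence inherit the density bound $d_D(V_i,V_j)\leq Lp$ that keeps the index bounded. Once this dependency is set up correctly, the rest is the standard Szemer\'edi iteration; the directed version requires no further ideas, since all sums are already over ordered pairs and the refinement argument treats each orientation independently.
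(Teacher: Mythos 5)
The paper does not prove Lemma~\ref{diregularity}; it is quoted as a known result due to Kohayakawa~\cite{Kohayakawa}, with no proof supplied. Your sketch is the standard Szemer\'edi--Kohayakawa energy-increment argument (index normalized by $p^2$, boundedness giving $\mathrm{ind}\le L^2$, defect Cauchy--Schwarz yielding a $\Theta(\varepsilon^5)$ increment, tower iteration with $\eta$ chosen after $M$), which is precisely the approach of the cited source and is correct.
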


Let $D=(V,E)$ be a directed graph and let $\delta > 0$ be a parameter. Given an $(\varepsilon, p)$-regular partition $\{V_0, V_1, \ldots, V_k\}$ of $V$, we define the \emph{regularity digraph} $R = R(D, \delta)$ to be the directed graph with vertex set $\{v_1, v_2, \ldots, v_k\}$ such that for every $1 \leq i \neq j \leq k$, $(v_i, v_j)$ is an arc of $R$ if and only if $(V_i, V_j)$ is $(\varepsilon, p)$-regular with directed density at least $\delta$.

Note that two ordered $(\varepsilon, p)$-regular pairs $(V_i, V_{i'})$ and $(V_j, V_{j'})$ in the partition might have different directed densities. While this is not really a problem, it would be convenient to assume that all ordered regular pairs with positive density have the same directed density. This can be done by applying the following lemma from~\cite{GS}.

\begin{lemma} \label{lem::GS}
For every $0 < \varepsilon \leq 1/6$ there exists a constant $C = C(\varepsilon)$ such that any $(\varepsilon)$-regular graph $H = (A \cup B, E)$ contains a $(2\varepsilon)$-regular subgraph $H' = (A \cup B, E')$ with $|E'| = m$ edges for all $m$ satisfying $C |V(H)| \leq m \leq |E|$. 
\end{lemma}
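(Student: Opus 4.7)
The plan is to show existence probabilistically: let $H'$ be a uniformly random subgraph of $H$ with exactly $m$ edges. Write $d := d_H(A,B) = |E|/(|A||B|)$ and $q := m/|E|$, so $d' := d_{H'}(A,B) = m/(|A||B|) = qd$ is \emph{deterministic}. The goal is thus reduced to showing that with positive probability, $|d_{H'}(X,Y) - d'| \le 2\varepsilon\,d'$ for every $X \subseteq A$ and $Y \subseteq B$ with $|X| \ge 2\varepsilon|A|$ and $|Y| \ge 2\varepsilon|B|$.

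Fix such a pair $(X,Y)$. Since $2\varepsilon \ge \varepsilon$, the $(\varepsilon)$-regularity of $H$ yields $d_H(X,Y) \in [(1-\varepsilon)d,(1+\varepsilon)d]$, and in particular $e_H(X,Y) \ge 4(1-\varepsilon)\varepsilon^2\,d|A||B|$. Under uniform $m$-edge sampling, $e_{H'}(X,Y)$ is hypergeometric with mean $q\,e_H(X,Y)$, hence $\Ex[d_{H'}(X,Y)] = q\,d_H(X,Y) \in [(1-\varepsilon)d',(1+\varepsilon)d']$, contributing only a deterministic error $|\Ex[d_{H'}(X,Y)] - d'| \le \varepsilon\,d'$. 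By the triangle inequality, it therefore suffices to control the stochastic deviation $|d_{H'}(X,Y) - \Ex[d_{H'}(X,Y)]|$ by $\varepsilon\,d'$ simultaneously over all valid pairs. The Chernoff--Hoeffding bound for hypergeometric variables (which matches the binomial bound up to constants) gives
\[
\Prob\Bigl[\,|e_{H'}(X,Y) - q\,e_H(X,Y)| > \delta\,q\,e_H(X,Y)\,\Bigr] \;\le\; 2\exp\!\Bigl(-\tfrac{\delta^2\,q\,e_H(X,Y)}{3}\Bigr).
\]
Setting $\delta := \varepsilon/(1+\varepsilon)$ ensures $\delta\,q\,e_H(X,Y) \le \varepsilon\,q\,d|X||Y| = \varepsilon\,d'|X||Y|$, so outside the tail event we obtain $|d_{H'}(X,Y) - \Ex[d_{H'}(X,Y)]| \le \varepsilon\,d'$ as desired. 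Using $q\,e_H(X,Y) \ge 4(1-\varepsilon)\varepsilon^2\,m$, the failure probability for a fixed pair $(X,Y)$ is at most $2\exp(-c(\varepsilon)\,m)$ with $c(\varepsilon) = \Theta(\varepsilon^4)$.

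A union bound over the at most $2^{|A|+|B|} = 2^{|V(H)|}$ admissible pairs $(X,Y)$ yields total failure probability at most $2^{|V(H)|+1}\exp(-c(\varepsilon)\,m)$, which is strictly less than $1$ provided $m \ge C(\varepsilon)|V(H)|$ for a suitable $C(\varepsilon) = \Theta(1/\varepsilon^4)$. The probabilistic method then guarantees a $(2\varepsilon)$-regular $m$-edge subgraph. The main subtlety -- and really the only obstacle -- is the observation that sampling \emph{exactly} $m$ edges makes $d'$ deterministic, which is what allows the $(2\varepsilon)$-regularity condition to be checked relative to a fixed reference density; the alternative of independent edge sampling with probability $q$ would leave $d_{H'}(A,B)$ random and require an additional concentration step for $|E(H')|$ around $m$ followed by a minor edge-swap correction that does not spoil $(2\varepsilon)$-regularity.
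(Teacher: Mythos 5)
The paper does not prove this lemma itself: it is quoted directly from the Gerke--Steger survey \cite{GS} without an argument, so there is no ``paper proof'' to compare against. Your probabilistic argument is correct and is, to my knowledge, essentially the standard (and the original) approach to this result: sample a uniformly random $m$-edge subgraph so that $d'=d_{H'}(A,B)=m/(|A||B|)$ is fixed; for each pair $(X,Y)$ with $|X|\ge 2\varepsilon|A|$, $|Y|\ge 2\varepsilon|B|$, the $(\varepsilon)$-regularity of $H$ pins $\Exp[d_{H'}(X,Y)]=q\,d_H(X,Y)$ to within $\varepsilon d'$ of $d'$, and the hypergeometric Chernoff bound (the binomial tail bounds transfer verbatim to the hypergeometric by convex domination, not merely ``up to constants'') controls the stochastic deviation by another $\varepsilon d'$ with failure probability $\exp(-\Theta(\varepsilon^4 m))$; a union bound over the at most $2^{|V(H)|}$ admissible pairs then closes the argument once $m\ge C(\varepsilon)|V(H)|$ with $C(\varepsilon)=\Theta(\varepsilon^{-4})$. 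Your closing remark about why one samples a fixed number of edges rather than i.i.d.\ at rate $q$ correctly identifies the one place where a naive implementation would lose the deterministic reference density. The only parameter check worth flagging explicitly is that $\delta=\varepsilon/(1+\varepsilon)<1$, so the multiplicative Chernoff form you invoke is applicable; this is automatic for $\varepsilon\le 1/6$.
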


The following proposition asserts that most small subsets of regular pairs are also regular.
\begin{proposition} \label{regularSmallSets}
For every $0 < \beta, \varepsilon' < 1$, there exist $\varepsilon_0 = \varepsilon_0(\beta, \varepsilon') > 0$ and $C = C(\varepsilon')$ such that, for any $0 < \varepsilon \leq \varepsilon_0$, the following holds. Suppose that $D = (V_1 \cup V_2, E)$ is a bipartite digraph such that $(V_1, V_2)$ is $(\varepsilon)$-regular with directed density $d = d_D(V_1, V_2)$, and suppose that $q \geq C d^{-1}$ is an integer. Then the number of sets $Q \subseteq V_1$ of size $q$ that contain a set $\tilde{Q}$ of size at least $(1 - \varepsilon')q$ for which $(\tilde{Q}, V_2)$ is $(\varepsilon')$-regular with directed density $d'$ satisfying $(1 - \varepsilon)d \leq d' \leq (1 + \varepsilon)d$ is at least $(1 - \beta^q) \binom{|V_1|}{q}$. Similarly, the number of sets $Q \subseteq V_2$ of size $q$ that contain a set $\tilde{Q}$ of size at least $(1 - \varepsilon')q$ for which $(V_1, \tilde{Q})$ is $(\varepsilon')$-regular with directed density $d'$ satisfying $(1 - \varepsilon)d \leq d' \leq (1 + \varepsilon)d$ is at least $(1 - \beta^q) \binom{|V_2|}{q}$.
\end{proposition}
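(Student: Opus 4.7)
The plan is to sample $Q$ uniformly at random among the $q$-subsets of $V_1$ and show that with probability at least $1-\beta^q$ the sample $Q$ contains a suitable $\tilde{Q}$; since this probability equals the fraction of good $q$-subsets, the counting bound is immediate. The second statement then follows by applying the same argument to the ``reversed'' bipartite digraph (swapping the roles of $V_1$ and $V_2$). Throughout I would choose $\varepsilon_0 = \varepsilon_0(\beta,\varepsilon')$ sufficiently small and $C = C(\varepsilon')$ sufficiently large so that the parameter $qd \geq C$ can absorb every error term appearing below, and write $d = d_D(V_1, V_2)$.

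The first ingredient is a degree-inheritance step: for every $Y \subseteq V_2$ with $|Y| \geq \varepsilon' |V_2|$, the $(\varepsilon)$-regularity of $(V_1, V_2)$ forces $d_D(V_1, Y) \in [(1-\varepsilon)d, (1+\varepsilon)d]$, and taking $X$ to be the set of high-degree (respectively low-degree) vertices and invoking regularity a second time shows that
$$B_Y = \{v \in V_1 : \deg(v,Y) \notin [(1-\sqrt{\varepsilon})d|Y|, (1+\sqrt{\varepsilon})d|Y|]\}$$
has size at most $2\sqrt{\varepsilon}|V_1|$. A Hoeffding-type estimate for the hypergeometric sum $e(Q, V_2) = \sum_{v \in Q}\deg(v,V_2)$, whose summands are concentrated around $d|V_2|$ outside a small exceptional set, then yields $d_D(Q, V_2) \in [(1-\varepsilon)d, (1+\varepsilon)d]$ with probability at least $1-\exp(-\Omega(qd))$, so the density requirement on $d' = d_D(\tilde{Q}, V_2)$ comes essentially for free as long as $\tilde{Q}$ is close to $Q$ in size.

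The heart of the argument is to identify a set $B(Q) \subseteq Q$ of size at most $\varepsilon' q$ such that $\tilde{Q} = Q \setminus B(Q)$ forms an $(\varepsilon')$-regular pair with $V_2$. I would call a pair $(X, Y)$ with $X \subseteq V_1$, $Y \subseteq V_2$, $|X| \geq \varepsilon' q/2$ and $|Y| \geq \varepsilon' |V_2|$ a \emph{witness of irregularity} if $|d_D(X,Y)-d| > \varepsilon' d/2$; absence of a suitable $\tilde{Q}$ forces some witness to satisfy $X \subseteq Q$. To bound this probability I would observe that for each candidate $X$, the worst-case $Y$ is the ``top-$k$'' (or ``bottom-$k$'') subset of $V_2$ sorted by in-degree into $X$, cutting the effective number of $Y$'s per $X$ down to $O(|V_2|)$. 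For each fixed witness $(X, Y)$, the probability that $X \subseteq Q$ is at most $(q/|V_1|)^{|X|}$, and conditional on $X \subseteq Q$ the random variable $e(Q \setminus X, Y)$ is sharply concentrated by a Chernoff bound with failure probability $\exp(-\Omega(qd))$, again using the degree control from the first step. The resulting union bound delivers the required $\beta^q$ once $C$ is large enough for $qd$ to dominate the logarithms of both the number of witnesses and of $1/\beta^q$.

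The main obstacle is precisely this union bound: since $|V_2|$ can be arbitrarily larger than $q$, naively enumerating over all $Y \subseteq V_2$ is hopeless, and the cancellation must come from a combination of (i) the sharp concentration provided by ``almost every vertex of $V_1$ has controlled degree into any large $Y$'', and (ii) the observation that the adversarial $Y$ for each fixed $X$ is essentially determined by $X$'s induced degree profile. This is the standard difficulty addressed by the sparse regularity inheritance lemmas of Kohayakawa--R\"odl and Gerke--Kohayakawa--R\"odl--Steger, and a careful implementation of their reduction should suffice in our setting.
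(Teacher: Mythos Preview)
The paper does not give a self-contained proof of this proposition at all: it simply observes that since only the arcs oriented from $V_1$ to $V_2$ are relevant, the directed statement is literally the undirected one, and invokes Theorem~3.7 of Gerke--Kohayakawa--R\"odl--Steger~\cite{GKRS}. Your proposal, by contrast, sketches an attempt at a direct proof and then, at the crucial point, defers to the same GKRS machinery. So in the end both routes rest on~\cite{GKRS}; the paper's is just the honest shortcut.

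As a sketch of the GKRS argument, your outline captures the right flavour but the union bound you describe does not close. Two concrete issues: first, your concentration rate $\exp(-\Omega(qd))$ is the wrong quantity to aim for, since $qd$ may be as small as the constant $C = C(\varepsilon')$ (the hypothesis is only $q \geq C d^{-1}$), which can never beat $\beta^q$; the relevant concentration for the hypergeometric degree sums is of the form $\exp(-c(\varepsilon')q)$. Second, even granting your ``top-$k$'' reduction to $O(|V_2|)$ choices of $Y$ per $X$, nothing in the hypotheses bounds $|V_2|$ in terms of $q$ or $d$, so this factor cannot be absorbed. The actual GKRS proof avoids enumerating $Y$'s altogether via a more delicate counting of ``bad'' $q$-sets through a defect form of regularity; your sketch does not reproduce that step, and you correctly identify it as the main obstacle before handing off to their paper. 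Given that, the cleanest thing to do is exactly what the paper does: note the equivalence with the undirected case and cite~\cite{GKRS} directly.
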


The analogous statement for undirected graphs was proved in~\cite{GKRS}. Since we only care about the arcs of $D$ oriented from $V_1$ to $V_2$, Proposition~\ref{regularSmallSets} is equivalent to Theorem 3.7 from~\cite{GKRS}.

In Section~\ref{sec::randomDigraphs} we will need the following corollary of Proposition~\ref{regularSmallSets}.

\begin{corollary} \label{cor::regularityOfSmallPairs}
For every $0 < \beta, \varepsilon' < 1$, there exist $\varepsilon_0 = \varepsilon_0(\beta, \varepsilon') > 0$ and $C = C(\beta, \varepsilon')$ such that, for any $0 < \varepsilon \leq \varepsilon_0$, the following holds. Suppose that $D = (V_1 \cup V_2, E)$ is a bipartite digraph such that $(V_1, V_2)$ is $(\varepsilon)$-regular with directed density $d = d_D(V_1, V_2)$, and suppose that $q_1, q_2 \geq C d^{-1}$ are integers. Then the number of pairs $(Q_1, Q_2)$ such that 
\begin{description}
\item [(i)] $Q_1 \in \binom{V_1}{q_1}$, $Q_2 \in \binom{V_2}{q_2}$ and 
\item [(ii)] there exist $\tilde{Q}_1 \subseteq Q_1$ of size at least $(1 - \varepsilon') q_1$ and $\tilde{Q}_2 \subseteq Q_2$ of size at least $(1 - \varepsilon') q_2$ such that the pair $(\tilde{Q}_1, \tilde{Q}_2)$ is $(\varepsilon')$-regular with directed density $d''$ satisfying $(1 - \varepsilon')^2 d \leq d'' \leq (1 + \varepsilon')^2 d$ 
\end{description}
is at least $(1 - \beta^{q_1} - \beta^{q_2}) \binom{|V_1|}{q_1} \binom{|V_2|}{q_2}$.
\end{corollary}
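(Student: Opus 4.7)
The plan is to apply Proposition~\ref{regularSmallSets} twice in succession. Let $\varepsilon_0^{*}(\beta, x)$ and $C^{*}(x)$ denote the constants produced by Proposition~\ref{regularSmallSets} for parameters $\beta$ and $x$. Fix an auxiliary parameter $\varepsilon_1 := \min\{\varepsilon', \varepsilon_0^{*}(\beta, \varepsilon')\}$, and set $\varepsilon_0 := \min\{1/2, \varepsilon_0^{*}(\beta, \varepsilon_1)\}$ and $C := \max\{C^{*}(\varepsilon_1),\, 2 C^{*}(\varepsilon')\}$. Suppose throughout that $\varepsilon \leq \varepsilon_0$ and $q_1, q_2 \geq C d^{-1}$.

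First, apply Proposition~\ref{regularSmallSets} to the $(\varepsilon)$-regular pair $(V_1, V_2)$ with output regularity parameter $\varepsilon_1$. This produces a collection of at least $(1-\beta^{q_1})\binom{|V_1|}{q_1}$ sets $Q_1 \in \binom{V_1}{q_1}$, each containing a subset $\tilde Q_1 \subseteq Q_1$ of size at least $(1-\varepsilon_1)q_1 \geq (1-\varepsilon')q_1$ such that $(\tilde Q_1, V_2)$ is $(\varepsilon_1)$-regular with some density $d'$ satisfying $(1-\varepsilon)d \leq d' \leq (1+\varepsilon)d$; in particular $d' \geq d/2$. Fix any such $Q_1$ and $\tilde Q_1$, and apply the symmetric half of Proposition~\ref{regularSmallSets} to the $(\varepsilon_1)$-regular pair $(\tilde Q_1, V_2)$, now with output regularity parameter $\varepsilon'$. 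The choice $\varepsilon_1 \leq \varepsilon_0^{*}(\beta, \varepsilon')$ makes the application legal, and $q_2 \geq 2 C^{*}(\varepsilon') d^{-1} \geq C^{*}(\varepsilon')(d')^{-1}$ takes care of the size condition. The outcome is at least $(1-\beta^{q_2})\binom{|V_2|}{q_2}$ sets $Q_2 \in \binom{V_2}{q_2}$, each containing a subset $\tilde Q_2 \subseteq Q_2$ of size at least $(1-\varepsilon')q_2$ such that $(\tilde Q_1, \tilde Q_2)$ is $(\varepsilon')$-regular with density $d''$ satisfying $(1-\varepsilon_1)d' \leq d'' \leq (1+\varepsilon_1)d'$.

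Combining the two density bounds and using $\varepsilon, \varepsilon_1 \leq \varepsilon'$ gives
\[
(1-\varepsilon')^2 d \;\leq\; (1-\varepsilon_1)(1-\varepsilon)d \;\leq\; d'' \;\leq\; (1+\varepsilon_1)(1+\varepsilon)d \;\leq\; (1+\varepsilon')^2 d,
\]
so $(\tilde Q_1, \tilde Q_2)$ meets condition (ii). Summing over all good $Q_1$ and, for each, over all good $Q_2$, the number of pairs $(Q_1, Q_2)$ satisfying (i) and (ii) is at least $(1-\beta^{q_1})(1-\beta^{q_2})\binom{|V_1|}{q_1}\binom{|V_2|}{q_2} \geq (1-\beta^{q_1}-\beta^{q_2})\binom{|V_1|}{q_1}\binom{|V_2|}{q_2}$, as desired. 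The proof poses no real mathematical obstacle — the corollary is essentially \emph{Proposition~\ref{regularSmallSets} applied twice} — and the only care required lies in arranging the nested parameter choices so that the output regularity of the first application is small enough to serve as a legal input to the second.
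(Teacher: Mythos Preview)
Your proof is correct and follows essentially the same approach as the paper's: two nested applications of Proposition~\ref{regularSmallSets}, with the output regularity of the first application chosen small enough to serve as the input regularity for the second. The only cosmetic differences are that you process $V_1$ before $V_2$ (the paper does it the other way around) and that you are slightly more careful with the constant $C$, inserting the factor $2$ to account for the second application being made at density $d' \geq d/2$ rather than $d$. One tiny bookkeeping point: to justify the chain $(1-\varepsilon_1)(1-\varepsilon) \geq (1-\varepsilon')^2$ you invoke $\varepsilon \leq \varepsilon'$, which your definition of $\varepsilon_0$ does not quite guarantee; simply replace the $1/2$ in that definition by $\varepsilon_1$ (as the paper does) and everything goes through.
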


\begin{proof}
Let $\varepsilon_0^1 = \varepsilon_0^1(\beta, \varepsilon') > 0$ and $C_1 = C_1(\varepsilon')$ be the constants whose existence follows from Proposition~\ref{regularSmallSets} and let $\varepsilon_1 = \min \{\varepsilon_0^1, \varepsilon'\}$. Let $\varepsilon_0^2 = \varepsilon_0^2(\beta, \varepsilon_1) > 0$ and $C_2 = C_2(\varepsilon_1)$ be the constants whose existence follows from Proposition~\ref{regularSmallSets}. Let $C = \max \{C_1, C_2\}$, let $\varepsilon_0 = \min \{\varepsilon_0^2, \varepsilon_1\}$ and fix some $0 < \varepsilon \leq \varepsilon_0$.

Let ${\mathcal F}_1$ denote the family of all sets $B \subseteq V_2$ of size $q_2$ for which there exists a set $\tilde{B} \subseteq B$ of size at least $(1 - \varepsilon_1)q_2$ such that the pair $(V_1, \tilde{B})$ is $(\varepsilon_1)$-regular with directed density $d'$ for some $(1 - \varepsilon)d \leq d' \leq (1 + \varepsilon)d$. Clearly $|{\mathcal F}_1| \leq \binom{|V_2|}{q_2}$. Let ${\mathcal F}_2 = \binom{V_2}{q_2} \setminus {\mathcal F}_1$. Since $q_2 \geq C d^{-1} \geq C_2 d^{-1}$, it follows by Proposition~\ref{regularSmallSets} that $|{\mathcal F}_2| \leq \beta^{q_2} \binom{|V_2|}{q_2}$. Hence there are at most $\binom{|V_1|}{q_1} \beta^{q_2} \binom{|V_2|}{q_2}$ pairs $(A,B) \in \binom{V_1}{q_1} \times {\mathcal F}_2$ which do not satisfy Condition (ii) of Corollary~\ref{cor::regularityOfSmallPairs}.  

Fix some arbitrary $B \in {\mathcal F}_1$. Let $\tilde{B} \subseteq B$ be a set of size at least $(1 - \varepsilon_1)q_2$ such that the pair $(V_1, \tilde{B})$ is $(\varepsilon_1)$-regular with directed density $d'$ for some $(1 - \varepsilon)d \leq d' \leq (1 + \varepsilon)d$. Note that $|\tilde{B}| \geq (1 - \varepsilon')q_2$ since $\varepsilon_1 \leq \varepsilon'$. Since $q_1 \geq C d^{-1} \geq C_1 d^{-1}$, it follows by Proposition~\ref{regularSmallSets} that the number of sets $A \in \binom{V_1}{q_1}$ such that for every $\tilde{A} \subseteq A$ of size at least $(1 - \varepsilon')q_1$ the pair $(\tilde{A}, \tilde{B})$ is not $(\varepsilon')$-regular with directed density $d''$ for any $(1 - \varepsilon_1)d' \leq d'' \leq (1 + \varepsilon_1)d'$ is at most $\beta^{q_1} \binom{|V_1|}{q_1}$. Since $\varepsilon \leq \varepsilon_1 \leq \varepsilon'$ and $(1 - \varepsilon)d \leq d' \leq (1 + \varepsilon)d$ it follows that the number of sets $A \in \binom{V_1}{q_1}$ such that for every $\tilde{A} \subseteq A$ of size at least $(1 - \varepsilon')q_1$ the pair $(\tilde{A}, \tilde{B})$ is not $(\varepsilon')$-regular with directed density $d''$ for any $(1 - \varepsilon')^2 d \leq d'' \leq (1 + \varepsilon')^2 d$ is also at most $\beta^{q_1} \binom{|V_1|}{q_1}$. Multiplying this bound by the size of ${\mathcal F}_1$, it follows that there are at most $\beta^{q_1} \binom{|V_1|}{q_1} \binom{|V_2|}{q_2}$ pairs $(A,B) \in \binom{V_1}{q_1} \times {\mathcal F}_1$ which do not satisfy Condition (ii) of Corollary~\ref{cor::regularityOfSmallPairs}.       

We conclude that the number of pairs $(Q_1, Q_2)$ such that $Q_1 \subseteq V_1$ is of size $q_1$ and $Q_2 \subseteq V_2$ is of size $q_2$ which do not satisfy Condition (ii) of Corollary~\ref{cor::regularityOfSmallPairs} is at most
$$
\beta^{q_1} \binom{|V_1|}{q_1} \binom{|V_2|}{q_2} + \beta^{q_2} \binom{|V_1|}{q_1} \binom{|V_2|}{q_2} \,.
$$ 
\end{proof}

The following simple lemma is an immediate corollary of the definition of $(\varepsilon, p)$-regularity.
\begin{lemma} \label{smallDegreeVertices}
Let $(A,B)$ be an $(\varepsilon)$-regular pair with directed density $d$. Let $X \subseteq A$ and $Y \subseteq B$ be sets of size $|X| \geq \varepsilon |A|$ and $|Y| \geq \varepsilon |B|$. Then $\deg^+(x,Y) \leq (1 + \varepsilon)d|Y|$ for all but at most $\varepsilon |A|$ vertices $x \in A$ and $\deg^+(x,Y) \geq (1 - \varepsilon)d|Y|$ for all but at most $\varepsilon |A|$ vertices $x \in A$. Similarly, $\deg^-(y,X) \leq (1 + \varepsilon)d|X|$ for all but at most $\varepsilon |B|$ vertices $y \in B$ and $\deg^-(y,X) \geq (1 - \varepsilon)d|X|$ for all but at most $\varepsilon |B|$ vertices $y \in B$.
\end{lemma}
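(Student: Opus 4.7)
The statement is an immediate consequence of the definition of $(\varepsilon)$-regularity via a standard ``bad set'' absorption argument, and I would prove all four inequalities in parallel. Let me sketch the upper bound on $\deg^+(x, Y)$ in detail; the other three are identical modulo direction and sidedness. Let $B = \{x \in A : \deg^+(x, Y) > (1+\varepsilon) d |Y|\}$ and suppose for contradiction that $|B| > \varepsilon |A|$. By hypothesis $|Y| \geq \varepsilon |B|$, so the pair $(B, Y)$ satisfies the size requirements in the definition of $(\varepsilon, d)$-regularity of $(A,B)$ (recall that being $(\varepsilon)$-regular means $(\varepsilon, d)$-regular with $d = d_H(A,B)$, so the density deviation is at most $\varepsilon d$). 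This yields $d_H(B, Y) \leq d + \varepsilon d = (1+\varepsilon) d$. On the other hand, summing the defining inequality of $B$ over $x \in B$ gives $e_H(B, Y) > |B| \cdot (1+\varepsilon) d |Y|$, i.e.\ $d_H(B, Y) > (1+\varepsilon) d$, a contradiction.

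The lower bound $\deg^+(x, Y) \geq (1-\varepsilon) d |Y|$ is proved identically, taking the bad set $B' = \{x \in A : \deg^+(x, Y) < (1-\varepsilon) d |Y|\}$ and using the other side of the regularity inequality $d_H(B', Y) \geq (1-\varepsilon)d$. The two $\deg^-$ statements are the same argument with the roles of $A$ and $B$ swapped: one defines the bad set $\tilde Y \subseteq B$ of $y \in B$ whose in-degree into $X$ violates the target bound and invokes regularity of the pair $(X, \tilde Y)$. This is the only place where the hypothesis $|X| \geq \varepsilon |A|$ is used, and it is used precisely to legitimize the appeal to regularity on the $A$-side.

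There is no genuine obstacle in this proof; the only two things to keep track of are (a) that ``$(\varepsilon)$-regular'' means $(\varepsilon, p)$-regular with $p = d$, so the deviation on densities is $\varepsilon d$ and not $\varepsilon$, which is exactly what matches the factors $(1 \pm \varepsilon) d$ in the conclusion; and (b) that the strict inequality in ``more than $\varepsilon |A|$ bad vertices'' is strong enough to entail $|B| \geq \varepsilon|A|$ and thus permit the application of regularity, while still leading to a strict violation of the density bound.
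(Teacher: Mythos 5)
Your argument is correct and is exactly the standard bad-set absorption argument that the paper invokes by citing Lemma~4.1 of~\cite{GS} rather than writing out. The only cosmetic issue is that you reuse the letter $B$ for the bad subset of $A$, which clashes with the part $B$ of the regular pair $(A,B)$; renaming it (say to $A'$) would avoid confusion, but the logic is sound and matches what the reference proves.
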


A (straightforward) proof for the analogous statement for undirected graphs can be found e.g.\ in~\cite{GS}. Since we only care about the arcs oriented from $A$ to $B$, 
Lemma~\ref{smallDegreeVertices} is essentially equivalent to Lemma 4.1 from~\cite{GS}.

\section{Properties of random directed graphs}
\label{sec::randomDigraphs}
In this section we collect several results about random directed graphs which we will need later in the proof of our main theorem.

\begin{lemma} \label{badSet} 
Let $n$ be a positive integer and let $\log n/n \ll p = p(n) \leq 1$. Let $\varepsilon > 0$ be arbitrarily small, let $c > 0$ be a constant, and let $D=(V,E) \in {\mathcal D}(n,p)$. For a set $Y \subseteq V$, let $B_Y$ denote the set of all vertices $u \in V \setminus Y$ for which $|\deg^+_D(u,Y) - |Y| p| \geq \varepsilon |Y| p$ or $|\deg^-_D(u,Y) - |Y| p| \geq \varepsilon |Y| p$. Let $b = \max \{|B_Y| : Y \subseteq V, \; |Y| \geq c n\}$, then a.a.s. $b \leq p^{-1} \log n$. 
\end{lemma}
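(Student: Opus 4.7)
The plan is to combine Theorem~\ref{th::Chernoff}(iii) with a double union bound, one over the choice of $Y$ and one over a candidate subset $B \subseteq B_Y$ that would witness $|B_Y|$ being too large. First I would fix any $Y \subseteq V$ with $|Y| \ge cn$ and any $u \in V \setminus Y$. Since the out- and in-arcs between $u$ and $Y$ are independent $\mathrm{Ber}(p)$ variables, $\deg^+_D(u,Y)$ and $\deg^-_D(u,Y)$ are both distributed as $\mathrm{Bin}(|Y|,p)$, so Theorem~\ref{th::Chernoff}(iii) applied twice and a union bound yield
\[
\Pr[u \in B_Y] \le 4\exp(-\varepsilon^2 |Y| p/3) \le 4\exp(-\varepsilon^2 c n p/3).
\]

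The next step, and the only one that needs any thought, is to observe that the events $\{u \in B_Y\}_{u \in V\setminus Y}$ are mutually independent: the event $u \in B_Y$ depends only on arcs having $u$ as an endpoint, and these arc sets are disjoint for distinct vertices $u$. Consequently, for any $B \subseteq V\setminus Y$,
\[
\Pr[B \subseteq B_Y] \le \bigl(4\exp(-\varepsilon^2 c n p/3)\bigr)^{|B|}.
\]
Setting $t := \lceil p^{-1}\log n\rceil + 1$, a union bound over the at most $2^n$ choices of $Y$ and the at most $\binom{n}{t}$ choices of $B$ gives
\[
\Pr\!\bigl[b > p^{-1}\log n\bigr] \le 2^n \binom{n}{t} \bigl(4 e^{-\varepsilon^2 c n p/3}\bigr)^{t}.
\]

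The last step is a routine verification that this tends to $0$. Taking logarithms, the exponent is at most $n\log 2 + t\log n + O(t) - \varepsilon^2 c n p t/3$. Since $t = \Theta(p^{-1}\log n)$, the negative term is of order $-\varepsilon^2 c n \log n$, while $t\log n = O(p^{-1}\log^2 n)$ and $n\log 2$ are both $o(n\log n)$ because the assumption $p \gg \log n/n$ forces $p^{-1}\log n \ll n$. Hence the whole bound is $e^{-\Omega(n\log n)} = o(1)$, which proves the claim. The only subtle point in the argument is the independence claim in the second paragraph; everything else is bookkeeping.
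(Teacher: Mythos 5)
Your proposal is correct and follows essentially the same route as the paper: bound $\Pr[u\in B_Y]$ by Chernoff, take a union bound over the $2^n$ choices of $Y$ and the $\binom{n}{t}$ choices of a witness set of size $t\approx p^{-1}\log n$, and use the fact that for distinct $u,u'\notin Y$ the events $\{u\in B_Y\}$ and $\{u'\in B_Y\}$ depend on disjoint sets of arcs, hence are independent. The one difference is cosmetic: you state the independence explicitly, which the paper leaves implicit when it writes the bound $\binom{n}{p^{-1}\log n}\bigl(2e^{-\varepsilon^2 c'np}\bigr)^{p^{-1}\log n}$.
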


\begin{proof}
Let $u \in V$ be any vertex and let $Y \subseteq V \setminus \{u\}$ be any set of size at least $c n$. Note that $\deg^+_D(u,Y) \sim Bin(|Y|, p)$ and similarly $\deg^-_D(u,Y) \sim Bin(|Y|, p)$. Thus $\mathbb{E}(\deg^+_D(u,Y)) = \mathbb{E}(\deg^-_D(u,Y)) = |Y| p$. It follows by Theorem~\ref{th::Chernoff} (iii) that $Pr \left(|\deg^+_D(u,Y) - |Y| p| \geq \varepsilon |Y| p \right)$ is bounded from above by $2 \exp \left\{- \frac{\varepsilon^2}{3} \cdot |Y| p \right\} \leq e^{- \varepsilon^2 c' n p}$, where $c' > 0$ is an appropriate constant, and the same holds for $Pr \left(|\deg^-_D(u,Y) - |Y| p| \geq \varepsilon |Y| p \right)$ as well. Hence, 
\begin{eqnarray*} 
Pr(b \geq p^{-1} \log n) &=& Pr(\exists Y \subseteq V \text{ of size at least } c n \text{ such that } |B_Y| \geq 
p^{-1} \log n)\\ &\leq& 2^n \binom{n}{p^{-1} \log n} \left(2 e^{- \varepsilon^2 c' n p} \right)^{p^{-1} \log n} = o(1) \,. 
\end{eqnarray*}
\end{proof}

\begin{lemma} \label{lem::degreeTooHigh} 
Let $0 < c \leq 1$ be a constant, let $n$ be a positive integer, let $\log n/\sqrt{n} \ll p = p(n) \leq 1$ and let $D=(V,E) \in {\mathcal D}(n,p)$. Let $\ell = \ell(n) \leq n$ be an integer satisfying $\ell p^2 \gg \log n$. Then a.a.s. 
\begin{description}
\item [(i)] For every $A \subseteq V$ of size $c \ell p \leq a \leq 2 \ell p$ we have $|\{u \in V \setminus A : \deg_D^+(u, A) \geq 2 p a\}| \leq \ell p$.
\item [(ii)] For every $A \subseteq V$ of size $\ell p^{3/2} \leq a \leq 2 \ell p$ we have $|\{u \in V \setminus A : \deg_D^+(u, A) \geq 7 \sqrt{p} a\}| \leq \ell p^{3/2}$.
\end{description} 
\end{lemma}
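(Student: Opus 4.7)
The plan is to prove both parts by a single union bound on \emph{witness pairs} $(A,U)$: for each admissible $a$, each set $A \subseteq V$ of size $a$ and each candidate ``bad set'' $U \subseteq V \setminus A$ of the exceptional size (namely $\ell p$ for (i) and $\ell p^{3/2}$ for (ii)), I will estimate the probability that every $u \in U$ witnesses the forbidden out-degree into $A$. The key observation is that if every $u \in U$ has $\deg^+_D(u,A) \ge \tau$, then $e_D(U,A) \ge |U|\tau$, and $e_D(U,A) \sim \mathrm{Bin}(|U||A|,p)$ is a sum of independent Bernoullis whose mean I know. Applying an appropriate Chernoff inequality from Theorem~\ref{th::Chernoff} to this single binomial, and then union bounding over $A$, $U$, and finally over $a$, will give the desired bound.

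For part (i), taking $|U| = \ell p$ and $\tau = 2pa$ makes $|U|\tau = 2a\ell p^2$, which is exactly twice the mean $|U||A|p = a\ell p^2$. Chernoff (ii) with $\varepsilon = 1$ then gives probability at most $\exp(-a\ell p^2/3)$ for a fixed $(A,U)$. Using $\binom{n}{a}\binom{n}{\ell p} \le \exp(O(\ell p \log n))$ (valid since $a \le 2\ell p$) and then $a \ge c\ell p$, the log of the union bound over $(A,U)$ is at most $O(\ell p \log n) - \Omega(\ell^2 p^3)$, which tends to $-\infty$ precisely because $\ell p^2 \gg \log n$. Summing over the $O(n)$ admissible values of $a$ costs only an extra additive $\log n$ and is absorbed.

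For part (ii), I take $|U| = \ell p^{3/2}$ and $\tau = 7\sqrt{p}a$, so $|U|\tau = 7a\ell p^2$, which equals $7/\sqrt{p} \ge 7$ times the mean $a\ell p^{5/2}$. This is precisely the regime of Chernoff (iv): with $x = 7a\ell p^2$ I get $x \ge 7\cdot|U||A|p$ since $p \le 1$, hence the probability that $e_D(U,A) \ge 7a\ell p^2$ is at most $\exp(-7a\ell p^2)$. Combined with $\binom{n}{a}\binom{n}{\ell p^{3/2}} \le \exp(O(\ell p\log n))$ and $a \ge \ell p^{3/2}$, the exponent is at most $O(\ell p \log n) - \Omega(\ell p^2 \cdot a) \le O(\ell p \log n) - \Omega(\ell^2 p^{7/2})$, which again tends to $-\infty$ because $\ell p^2 \gg \log n$.

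The step I expect to be the main obstacle is choosing the right ``granularity'' at which to apply Chernoff. The naive per-vertex approach --- bound $\Pr[\deg^+(u,A) \ge \tau]$, observe $|B_A|$ is stochastically dominated by a binomial, and apply Chernoff to $|B_A|$ --- yields only something like $e^{-\ell p}$ for a single set $A$, which cannot beat the entropy $\binom{n}{a} \ge e^{\Omega(\ell p \log n)}$ of enumerating $A$. Pooling the bad vertices into the single arc count $e_D(U,A)$ and applying Chernoff \emph{once} to that binomial is what buys the crucial extra $\log n$ factor in the exponent; once this idea is in place, the calculations are routine.
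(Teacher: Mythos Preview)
Your approach is exactly the paper's: fix a candidate ``bad'' set $U$ (your $U$, the paper's $F$), observe that if every vertex of $U$ has large out-degree into $A$ then $e_D(U,A)$ is large, apply a single Chernoff bound to the binomial $e_D(U,A)$, and union bound over $(A,U,a)$. Part (i) is correct as written.

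In part (ii) there is a genuine (though easily repaired) bookkeeping slip. You bound the enumeration cost by taking the \emph{largest} admissible $a$, writing $\binom{n}{a}\binom{n}{\ell p^{3/2}} \le \exp(O(\ell p\log n))$, and then bound the Chernoff gain by taking the \emph{smallest} admissible $a$, getting $\Omega(\ell p^2 a) \ge \Omega(\ell^2 p^{7/2})$. The resulting comparison you need is $\ell^2 p^{7/2} \gg \ell p \log n$, i.e.\ $\ell p^{5/2} \gg \log n$, and this does \emph{not} follow from the hypothesis $\ell p^2 \gg \log n$ when $p\to 0$. (Concretely, take $p = (\log n)^2/\sqrt{n}$ and $\ell = n/(\log n)^2$: then $\ell p^2 = (\log n)^2 \gg \log n$, but $\ell p^{5/2} = (\log n)^3/n^{1/4} \to 0$.)

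The fix is to keep the $a$-dependence in the entropy term rather than passing to the worst case: use $\binom{n}{a} \le n^{a}$ and $\binom{n}{\ell p^{3/2}} \le n^{\ell p^{3/2}} \le n^{a}$ (since $a \ge \ell p^{3/2}$), so that for each fixed $a$ the failure probability is at most
\[
n^{2a}\, e^{-7\ell p^2 a} \;=\; \exp\!\big(a(2\log n - 7\ell p^2)\big),
\]
which is $o(1)$ term-by-term because $\ell p^2 \gg \log n$, and summing over the $O(\ell p)$ values of $a$ is harmless. This is precisely how the paper organises the computation.
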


\begin{proof}
Starting with (i), let $F \subseteq V \setminus A$ be an arbitrary set of size $\ell p$. Clearly $e_D(F, A) \sim Bin(|F||A|, p)$ and thus $\mathbb{E}(e_D(F, A)) = |F||A|p = \ell p^2 a$. It follows by Theorem~\ref{th::Chernoff} (ii) that $Pr(e_D(F, A) \geq 2\ell p^2 a) \leq e^{- \ell p^2 a/3}$. We conclude that the probability that there exists a set $A$ of size $c \ell p \leq a \leq 2 \ell p$ and a disjoint set $F$ of size $\ell p$ such that $\deg_D^+(u, A) \geq 2 p a$ for every $u \in F$ is at most 
\begin{eqnarray*}
\sum_{a = c \ell p}^{2 \ell p} \binom{n}{a} \binom{n-a}{\ell p} Pr(e_D(F, A) \geq 2\ell p^2 a)
&\leq& \sum_{a = c \ell p}^{2 \ell p} n^a n^{\ell p} e^{- \ell p^2 a/3}\\
&\leq& 2 \ell p \cdot \exp \left\{3 \ell p \log n - c \ell^2 p^3/3 \right\}\\
&=& o(1) \,,
\end{eqnarray*} 
where the last equality follows by our assumption that $\ell p^2 \gg \log n$.

Similarly for (ii), let $F \subseteq V \setminus A$ be an arbitrary set of size $\ell p^{3/2}$. Clearly $e_D(F, A) \sim Bin(|F||A|, p)$ and thus $\mathbb{E}(e_D(F, A)) = \ell p^{5/2} a$. It follows by Theorem~\ref{th::Chernoff} (iv) that $Pr(e_D(F, A) \geq \ell p^{3/2} \cdot 7 \sqrt{p} a) \leq e^{- 7 \ell p^2 a}$. We conclude that the probability that there exists a set $A$ of size $\ell p^{3/2} \leq a \leq 2 \ell p$ and a disjoint set $F$ of size $\ell p^{3/2}$ such that $\deg_D^+(u, A) \geq 7 \sqrt{p} a$ for every $u \in F$ is at most 
\begin{eqnarray*}
\sum_{a = \ell p^{3/2}}^{2 \ell p} \binom{n}{a} \binom{n-a}{\ell p^{3/2}} Pr(e_D(F, A) \geq 7 \ell p^2 a)
&\leq& \sum_{a = \ell p^{3/2}}^{2 \ell p} n^a n^{\ell p^{3/2}} e^{- 7 \ell p^2 a}\\
&\leq& \sum_{a = \ell p^{3/2}}^{2 \ell p} \exp \left\{2 a \log n - 7 \ell p^2 a \right\}\\
&=& o(1) \,,
\end{eqnarray*} 
where the second inequality holds since $a \geq \ell p^{3/2}$ and the last equality follows by our assumption that $\ell p^2 \gg \log n$.
\end{proof}

The following two lemmas will be useful in Stage 4 of the proof of the main result, where we will want to extend a long cycle to a Hamilton cycle. 

\begin{lemma} \label{lem::edgeDistribution}
Let $\alpha$ and $\beta$ be positive real numbers satisfying $3 \alpha < \beta$. Let $n$ be a positive integer, let $\sqrt{\log n/n} \ll p = p(n) \leq 1$ and let $D \in {\mathcal D}(n,p)$. Then a.a.s. the following holds for every $\emptyset \neq S \subseteq [n]$ of size $s \leq \alpha n$ and every set $T$ of $s$ arcs with both endpoints in $[n] \setminus S$ which span a digraph with maximum out-degree one and maximum in-degree one: there are less than $\beta s p^2 n$ pairs $((x,y),z) \in T \times S$ such that $(x,z) \in E(D)$ and $(z,y) \in E(D)$. 
\end{lemma}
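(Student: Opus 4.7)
The plan is a union bound over admissible pairs $(S,T)$, reducing to a binomial tail estimate for each fixed choice. For a fixed $s \in \{1, \ldots, \alpha n\}$, $S \subseteq [n]$ with $|S|=s$, and $T$ as in the statement, I would introduce indicators $X_{(x,y),z} := \mathbf{1}\bigl[(x,z) \in E(D) \text{ and } (z,y) \in E(D)\bigr]$ for each $((x,y),z) \in T \times S$, and set $X := \sum_{((x,y),z)} X_{(x,y),z}$. Every indicator has mean $p^2$, so $\Ex X = s^2 p^2 \leq \alpha s n p^2$, which is less than a third of the target threshold $\beta s n p^2$.

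The decisive step is to prove that the $s^2$ indicators are mutually independent, by checking that the $2 s^2$ arcs $\{(x,z),(z,y) : (x,y) \in T,\ z \in S\}$ are pairwise distinct. For two distinct triples $((x_1,y_1),z_1) \neq ((x_2,y_2),z_2)$, the equality $(x_1,z_1)=(x_2,z_2)$ would force $x_1=x_2$ and $z_1=z_2$, and then $\Delta^+(T) \leq 1$ would imply $y_1=y_2$, a contradiction; $(z_1,y_1)=(z_2,y_2)$ is ruled out symmetrically via $\Delta^-(T) \leq 1$; the cross-collisions $(x_1,z_1)=(z_2,y_2)$ or $(z_1,y_1)=(x_2,z_2)$ cannot occur because on one side a coordinate lies in $S$ and on the other in $[n] \setminus S$. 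Consequently $X \sim \mathrm{Bin}(s^2, p^2)$, and the standard upper-tail bound $\Pr(X \geq k) \leq (e \Ex X / k)^{k}$ applied with $k = \beta s n p^2$ yields $\Pr(X \geq k) \leq (e\alpha/\beta)^{\beta s n p^2}$.

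To close the union bound, I will estimate the number of pairs $(S,T)$ with $|S|=|T|=s$ by $\binom{n}{s}\binom{(n-s)^2}{s} \leq n^{3s}$. The probability that some such pair fails the conclusion is then at most $n^{3s}(e\alpha/\beta)^{\beta s n p^2}$, whose logarithm equals $s\bigl[3\log n - c \beta n p^2\bigr]$ with $c := \log\bigl(\beta/(e\alpha)\bigr) > 0$ (this is where the hypothesis $\beta > 3\alpha > e\alpha$ enters). Since $p \gg \sqrt{\log n/n}$ implies $np^2 \gg \log n$, the second term dominates; each summand is at most $n^{-s}$ for large $n$, and the sum over $s$ is $o(1)$. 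The main obstacle I anticipate is the independence claim itself: both $\Delta^\pm(T) \leq 1$ and the disjointness of $S$ and $V(T)$ are essential, and without either the indicators would be correlated, forcing a second-moment or martingale argument in place of the clean binomial tail.
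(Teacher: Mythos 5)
Your proof is correct and follows essentially the same route as the paper: both establish that the $2s^2$ arcs underlying the indicators are pairwise distinct (your case analysis and the paper's four-property argument encode the same facts about $\Delta^\pm(T)\le 1$ and $S\cap V(T)=\emptyset$), conclude mutual independence, and finish with the same binomial tail bound $\binom{s^2}{k}(p^2)^k\le(es^2p^2/k)^k$ and a union bound over $\binom{n}{s}\binom{(n-s)^2}{s}\le n^{3s}$ choices of $(S,T)$.
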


\begin{proof}
Fix sets $S$ and $T$ as in the statement of the lemma. Given a vertex $z \in S$ and an arc $e = (x,y) \in T$, let $A_e^z$ denote the event: ``$(x,z) \in E(D)$ and $(z,y) \in E(D)$''. Note that
\begin{enumerate}
\item $Pr(A_e^z) = p^2$ holds for every $z \in S$ and every arc $e \in T$. 
\item If $u \neq v$ are vertices in $S$ and $e, f$ are not necessarily distinct arcs in $T$, then the events $A_e^u$ and $A_f^v$ are independent.
\item If $z \in S$ and $e, f \in T$ are two disjoint or anti-parallel arcs, then the events $A_e^z$ and $A_f^z$ are independent. 
\item If $z \in S$ and $e, f \in T$ share one vertex $v$, then $e \cup f$ is a directed path of length 2 (otherwise $v$ will have out-degree or in-degree at least 2). Hence, the events $A_e^z$ and $A_f^z$ are independent.  
\end{enumerate} 
It readily follows from the above four properties that for every $B \subseteq T \times S$, the events of $\{A_e^z : (e, z) \in B\}$ are mutually independent and thus $Pr \left(\bigwedge_{(e, z) \in B} A_e^z \right) = (p^2)^{|B|}$. 

We thus conclude that the probability that there exist sets $S$ and $T$ as in the assertion of the lemma for which there are at least $\beta s p^2 n$ pairs $((x,y),z) \in T \times S$ such that $(x,z) \in E(D)$ and $(z,y) \in E(D)$ is at most
\begin{eqnarray*}
\sum_{s=1}^{\alpha n} \binom{n}{s} \binom{(n-s)^2}{s} \binom{s^2}{\beta s p^2 n} (p^2)^{\beta s p^2 n} 
\leq \sum_{s=1}^{\alpha n} n^s n^{2s} \left(\frac{e s}{\beta n}\right)^{\beta s p^2 n}
\leq \sum_{s=1}^{\alpha n} \left[n^3 \left(\frac{e}{3}\right)^{\beta p^2 n}\right]^s
= o(1) \,,
\end{eqnarray*}
where the second inequality follows since $s \leq \alpha n < \beta n/3$ and the last equality follows by the assumed lower bound on $p$.
\end{proof}

\begin{lemma} \label{lem::matchingStage4}
Let $G = (A \cup B, E)$ be a bipartite graph on $n = |A| + |B|$ vertices, where $|A| \leq |B|$, and let $\delta = \delta(n)$ be a positive integer. If $G$ satisfies the following two properties
\begin{description}
\item [(i)] $\deg_G(x) \geq \delta$ holds for every $x \in A$;
\item [(ii)] $e_G(X,Y) < \delta |X|$ holds for every $X \subseteq A$ and every $Y \subseteq B$ such that $|X| = |Y|$;
\end{description}
then there exists a matching of $G$ which saturates $A$.
\end{lemma}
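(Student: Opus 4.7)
The plan is to verify Hall's condition and invoke Hall's marriage theorem. Since $G$ is bipartite and we want a matching saturating $A$, it suffices to show that $|N_G(X)| \geq |X|$ for every $X \subseteq A$. The idea is that if Hall's condition failed for some $X$, then $X$ would have many edges packed into a small neighborhood, which we could then extend to a set $Y \subseteq B$ of size $|X|$ witnessing a violation of hypothesis $(ii)$.

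More precisely, suppose for contradiction that there exists $X \subseteq A$ with $|N_G(X)| < |X|$. Since $|X| \leq |A| \leq |B|$, we may choose a set $Y \subseteq B$ of size exactly $|X|$ with $N_G(X) \subseteq Y$ (just add arbitrary vertices of $B \setminus N_G(X)$ to $N_G(X)$ until the size reaches $|X|$). Because every edge from $X$ into $B$ lands inside $N_G(X) \subseteq Y$, we have
\[
e_G(X,Y) \;=\; \sum_{x \in X} \deg_G(x) \;\geq\; \delta |X|,
\]
where the inequality uses hypothesis $(i)$. This directly contradicts hypothesis $(ii)$ applied to the pair $(X,Y)$, which asserts $e_G(X,Y) < \delta |X|$.

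Hence Hall's condition holds for every $X \subseteq A$, and Hall's theorem yields a matching of $G$ saturating $A$. There is no real obstacle: hypothesis $(ii)$ is tailored precisely so that the standard Hall-type argument goes through, and the assumption $|A| \leq |B|$ is only used to guarantee that the set $Y$ of size $|X|$ exists inside $B$.
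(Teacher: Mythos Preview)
Your proof is correct and follows essentially the same approach as the paper: both argue by contradiction, assuming Hall's condition fails for some set $X\subseteq A$, extending $N_G(X)$ to a set $Y\subseteq B$ of size $|X|$, and then observing that hypothesis~(i) forces $e_G(X,Y)\ge \delta|X|$, contradicting hypothesis~(ii). The only cosmetic difference is that the paper also notes the violating set must be nonempty, which is implicit in your argument as well.
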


\begin{proof}
In order to prove the existence of such a matching, we will use Hall's Theorem (see e.g.~\cite{West}), that is, we will prove that $|N_G(S)| \geq |S|$ holds for every $S \subseteq A$, where $N_G(S) := \{v \in B : \exists u \in S \textrm{ such that } \{u,v\} \in E\}$. Suppose for a contradiction that there exists a set $S \subseteq A$ of size $s$ such that $|N_G(S)| < |S|$; clearly $S \neq \emptyset$. Let $N_G(S) \subseteq T \subseteq B$ be a set of size $s$. Since $\deg_G(x) \geq \delta$ holds for every $x \in A$ by Property (i) above, it follows that $e_G(S,T) \geq \delta s$. On the other hand $e_G(S,T) < \delta s$ holds by Property (ii) above. Clearly, this is a contradiction.  
\end{proof}

A central part of our proof of the lower bound in Theorem~\ref{directedHam} will consist of building a long directed path with certain properties. The use of the Sparse Diregularity Lemma will result in certain problematic vertices. In the course of building this path we will try to avoid some of these problematic vertices while making sure we include others. In what follows we describe two kinds of problematic vertices we will have to deal with. Since we apply the Sparse Diregularity Lemma to a subdigraph of ${\mathcal D}(n,p)$, we expect to encounter a relatively small number of problematic vertices. This will be made precise in Lemmas~\ref{badConfigurationsI} and~\ref{badConfigurationsII}.

Let $\varepsilon, \varepsilon'$ be positive real numbers. Let $D=(V,E)$ be a digraph on $n$ vertices, let $q_1 = q_1(n)$, $q_2 = q_2(n)$ and $\ell = \ell(n)$ be positive integers, and let $0 < d = d(n) \leq 1$. Let $X$ and $Y$ be disjoint subsets of $V$ of size $\ell$ each, such that the pair $(X,Y)$ is $(\varepsilon)$-regular with directed density $d$.

A vertex $u \in V$ is called \emph{bad of type I} (with respect to $D$, $\ell$, $X$, $Y$, $\varepsilon$, $\varepsilon'$, $d$, $q_1$, and $q_2$) if $u \notin X \cup Y$ and at least one of the following conditions holds 
\begin{description}
\item [(I.1)] There exists a set $Q \subseteq N_D^+(u, X)$ of size $q_1 \leq |Q| \leq q_2$ such that for every $\tilde{Q} \subseteq Q$ of size at least $(1 - \varepsilon')|Q|$ the pair $(\tilde{Q}, Y)$ is \emph{not} $(\varepsilon')$-regular with directed density $d'$ for any $(1 - \varepsilon)d \leq d' \leq (1 + \varepsilon)d$. 

\item [(I.2)] There exists a set $Q \subseteq N_D^-(u, Y)$ of size $q_1 \leq |Q| \leq q_2$ such that for every $\tilde{Q} \subseteq Q$ of size at least $(1 - \varepsilon')|Q|$ the pair $(X, \tilde{Q})$ is \emph{not} $(\varepsilon')$-regular with directed density $d'$ for any $(1 - \varepsilon)d \leq d' \leq (1 + \varepsilon)d$. 
\end{description}

Given sets $X$ and $Y$ as above, let $T_1 \subseteq V$ denote a set of bad vertices of type I (with respect to these specific $X$ and $Y$). The digraph $D$, the sets $X, Y$ and the set $T_1$ are said to form a \emph{$(|T_1|, \ell, \varepsilon, \varepsilon', d, q_1, q_2)$ bad configuration of type I}. We will prove that a.a.s. no subdigraph of ${\mathcal D}(n,p)$ contains such a configuration with a large set $T_1$. 

\begin{lemma} \label{badConfigurationsI} Let $n$ be a positive integer, $n^{-1/2} \ll p = p(n) \leq 1$, and $D \in {\mathcal 
D}(n,p)$. Let $0 < \varepsilon', \rho, \lambda, \xi < 1$ be constants and let $d = \xi p$. Then, there exists $0 < \varepsilon = \varepsilon(\xi, \lambda, \rho, \varepsilon') \leq \varepsilon'$ such that, for every positive integer $n^{3/4} < \ell = \ell(n) < n$ satisfying $\ell \gg p^{-2}$, a.a.s. there are no $(\rho \ell, \ell, \varepsilon, \varepsilon', d, \lambda \ell p, 2 \ell p)$ bad configurations of type I in any subdigraph of $D$. 
\end{lemma}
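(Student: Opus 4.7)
The plan is to execute a union bound over all tuples $(X, Y, T_1', E^*, \{Q_u\})$ that could constitute a bad configuration of type~I, where $E^* \subseteq X \times Y$ plays the role of the arcs of the adversarial subdigraph $D'$ from $X$ to $Y$. I begin by fixing $\beta > 0$ small (to be chosen in terms of $\xi, \lambda, \rho$), then applying Proposition~\ref{regularSmallSets} with parameters $\beta$ and $\varepsilon'$ to get constants $\varepsilon_0^* = \varepsilon_0^*(\beta, \varepsilon')$ and $C^* = C^*(\varepsilon')$. Setting $\varepsilon = \min\{\varepsilon_0^*, \varepsilon'\}$ gives $\varepsilon \leq \varepsilon'$ as required, and the hypothesis $\ell \gg p^{-2}$ together with $d = \xi p$ guarantees $q_1 = \lambda \ell p \geq C^* d^{-1}$, so the proposition applies. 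If a bad configuration exists in some $D' \subseteq D$, by pigeonhole some $T_1' \subseteq T_1$ of size at least $\rho \ell / 2$ satisfies the same one of (I.1), (I.2); I handle (I.1), the other case being symmetric. Let $E^* \subseteq E_D(X, Y)$ denote the arcs of $D'$ from $X$ to $Y$; then $|E^*| = \xi p \ell^2$ and $(X, Y)$ is $(\varepsilon)$-regular with density $\xi p$ in the bipartite digraph $(X \cup Y, E^*)$. By Proposition~\ref{regularSmallSets} applied inside this bipartite digraph, for each $q \in [q_1, q_2]$ the number of subsets $Q \subseteq X$ of size $q$ that are bad with respect to $E^*$ (in the sense of (I.1)) is at most $\beta^q \binom{\ell}{q}$.

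The key structural observation is that the arcs of $E^* \subseteq X \times Y$ and the arcs $u \to Q_u$ for $u \in T_1' \subseteq V \setminus (X \cup Y)$ lie in pairwise disjoint coordinates of $D$, so their presence in $D$ constitutes mutually independent events, with joint probability $p^{|E^*|} \prod_{u} p^{|Q_u|}$. Summing over all choices of $(X, Y, T_1', E^*, \{Q_u\})$ consistent with the structural constraints, the expected count of bad configurations is bounded by
\begin{align*}
\binom{n}{\ell}^{2} \binom{n}{\rho \ell / 2} \binom{\ell^2}{\xi p \ell^2} p^{\xi p \ell^2} \left( \sum_{q = q_1}^{q_2} \beta^q \binom{\ell}{q} p^q \right)^{\rho \ell / 2}.
\end{align*}
Using $\binom{\ell^2}{\xi p \ell^2} p^{\xi p \ell^2} \leq (e/\xi)^{\xi p \ell^2}$ and $\binom{\ell}{q} p^q \leq (e/\lambda)^q$ for $q \geq \lambda \ell p$, the inner sum is at most $2 (e\beta/\lambda)^{\lambda \ell p}$ provided $\beta < \lambda / (2e)$. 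Taking logarithms collects the above into an upper bound of the form $\exp\bigl( O(\ell \log n) + \ell^2 p \bigl[ \xi \ln(e/\xi) - (\lambda \rho/2) \ln(\lambda/(e\beta)) \bigr] \bigr)$. Choosing $\beta$ small enough in terms of $\xi, \lambda, \rho$ makes the bracketed quantity a strictly negative constant, and since $\ell > n^{3/4}$ and $p \gg \log n/\sqrt{n}$ give $\ell^2 p \gg \ell \log n$, this bound is $o(1)$.

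The main obstacle is that the notion of ``bad $Q$'' is inherently tied to the arcs of $D'$, which is chosen adversarially after seeing $D$ (and whose density on $X \times Y$ is $\xi p \ll p$), so Proposition~\ref{regularSmallSets} cannot be applied directly inside $D$. I resolve this by including $E^*$ as an explicit witness in the union bound; the resulting combinatorial overhead $(e/\xi)^{\xi p \ell^2}$ is absorbed by the proposition's saving $(e\beta/\lambda)^{\lambda \rho \ell^2 p/2}$ once $\beta$ is taken sufficiently small. The analogous bound for configurations arising from condition (I.2) follows by the symmetric statement of Proposition~\ref{regularSmallSets} involving subsets of $V_2$.
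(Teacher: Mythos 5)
Your proposal is correct and follows essentially the same route as the paper: include the $d\ell^2$ arcs from $X$ to $Y$ as an explicit witness in the union bound, pay $\binom{\ell^2}{d\ell^2}p^{d\ell^2}\leq(e/\xi)^{\xi p\ell^2}$ for them, apply Proposition~\ref{regularSmallSets} to bound the number of bad $Q$'s, exploit independence of the arcs from distinct $u\in T_1$ into $X\cup Y$ to get the product $\bigl(\sum_q\beta^q\binom{\ell}{q}p^q\bigr)^{\Theta(\rho\ell)}$, and kill the $(e/\xi)^{\xi p\ell^2}$ overhead by taking $\beta$ small. The only cosmetic differences are that you pigeonhole onto one of (I.1)/(I.2) before the union bound (obtaining an exponent $\rho\ell/2$) whereas the paper keeps the full $T_1$ and pays a factor of $2$ per vertex, and you quote $p\gg\log n/\sqrt{n}$ at the very end where the lemma's actual hypothesis $p\gg n^{-1/2}$ already suffices for $\ell p\gg\log n$.
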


\begin{proof}
Since $\rho, \lambda, \xi$ are positive constants, by choosing $\beta > 0$ to be sufficiently small we can guarantee that
$\left(\frac{e}{\xi}\right)^{\xi} \left(\frac{\beta e}{\lambda}\right)^{\lambda \rho} \leq 1/4$. Let $\varepsilon_0 = 
\varepsilon_0(\beta, \varepsilon') > 0$ and $C = C(\varepsilon')$ be the constants whose existence follows from Proposition~\ref{regularSmallSets} and let $\varepsilon = \min \{\varepsilon_0, \varepsilon'\}$.

We would like to bound from above the expected number of large bad configurations of type I in any subdigraph of $D$. Fix 
two disjoint sets $X, Y \subseteq [n]$ of size $\ell$ each, and a set $T_1 \subseteq [n] \setminus (X \cup Y)$ of size $\rho \ell$ for
some $n^{3/4} < \ell < n$ such that $\ell \gg p^{-2}$. The number of choices of $X, Y$ and $T_1$ is at most
$\binom{n}{\ell}^2 \binom{n}{\rho \ell} \leq 2^{3n}$. Choose also $d \ell^2$ arcs directed from $X$ to $Y$. The number of ways to choose these arcs is $\binom{\ell^2}{d \ell^2}$ and each such arc set appears in a subdigraph of our random digraph $D$ with probability at most $p^{d \ell^2}$.

Since $C$ and $\lambda$ are constants, $\ell \gg p^{-2}$ and $d = \Theta(p)$ we have that $\lambda \ell p = \omega(p^{-1}) > C d^{-1}$. Therefore, if the pair $(X,Y)$ is $(\varepsilon)$-regular, then from Proposition~\ref{regularSmallSets} it follows that the number of sets $Q \subseteq X$ of size $q$ for some fixed $\lambda \ell p \leq q \leq 2 \ell p$ such that for every $\tilde{Q} \subseteq Q$ of size at least $(1 - \varepsilon')q$ the pair $(\tilde{Q}, Y)$ is \emph{not} $(\varepsilon')$-regular with directed density $d'$ for any $(1 - \varepsilon)d \leq d' \leq (1 + \varepsilon)d$ is at most $\beta^q \binom{\ell}{q}$. Let $u \in T_1$ be an arbitrary vertex. Expose all arcs of $D$ with one endpoint in $\{u\}$ and the other in $X \cup Y$. For any subdigraph $D'$ of $D$ and every $Q \subseteq X$, the probability that $Q \subseteq N_{D'}^+(u, X)$ is bounded from above by the probability that $Q \subseteq N_D^+(u, X)$ which is $p^{|Q|}$. It follows by a union bound argument that the probability that (I.1) holds for $u$ is at most $\sum_{q = \lambda \ell p}^{2 \ell p} \beta^q \binom{\ell}{q} p^q$ (note that if $\deg_{D'}^+(u, X) < \lambda \ell p$, then the probability that (I.1) holds for $u$ is 0). An analogous argument shows that exactly the same bound applies to the probability that (I.2) holds for $u$. Hence, the probability that $u$ is bad of type I with respect to these $\ell$, $X$ and $Y$ is at most $2 \sum_{q = \lambda \ell p}^{2 \ell p} \beta^q \binom{\ell}{q} p^q$. Since for distinct $u, u' \in T_1$, the validity of (I.1) (and similarly (I.2)) involves disjoint sets of edges, it follows that the events ``$u$ is bad of type I'' and ``$u'$ is bad of type I'' are independent (all events are with respect to the fixed $X$ and $Y$ and given the arcs between $X$ and $Y$). Hence, the probability that for given $X$, $Y$ and arcs between them, all the vertices of $T_1$ are bad of type I is at most

\begin{eqnarray} \label{eq::badVertices}
\left(2 \sum_{q = \lambda \ell p}^{2 \ell p} \beta^q \binom{\ell}{q} p^q\right)^{\rho \ell} \leq
\left(2 \sum_{q = \lambda \ell p}^{2 \ell p} \left(\beta \cdot \frac{e \ell}{q} \cdot p\right)^q \right)^{\rho \ell}
\leq \left(4 \ell p \left(\beta \cdot \frac{e}{\lambda} \right)^{\lambda \ell p} \right)^{\rho \ell} \leq \left(2 \left(\frac{\beta e}{\lambda}\right)^{\lambda \rho} \right)^{\ell^2 p}
\end{eqnarray}
where the second inequality follows since $\left(\beta e \ell p/q \right)^q$ is maximized by the smallest value of $q$ in the given range and in the last inequality we used the bounds $(4 \ell p)^{\rho} \leq 4 \ell p \leq 2^{\ell p}$ which hold since $\rho \leq 1$ and $\ell p \gg 1$. 

Summing over all appropriate choices of $\ell, X, Y, T_1$ and $d \ell^2$ arcs, directed from $X$ to $Y$, and using the 
estimate~\eqref{eq::badVertices}, by linearity of expectation, we conclude that the expected number of $(\rho \ell, \ell, \varepsilon, \varepsilon', d, \lambda \ell p, 2 \ell p)$ bad configurations of type I in some subdigraph of $D$ is at most

\begin{eqnarray*}
\sum_{\ell = n^{3/4}}^{n} 2^{3n} \binom{\ell^2}{d \ell^2} p^{d \ell^2} \left(2 \left(\frac{\beta 
e}{\lambda}\right)^{\lambda \rho} \right)^{\ell^2 p} 
&\leq& 2^{3 n} \sum_{\ell = n^{3/4}}^{n} \left(\frac{ep}{d}\right)^{d \ell^2} \left(2 \left(\frac{\beta e}{\lambda}\right)^{\lambda \rho} \right)^{\ell^2 p}\\
&=& 2^{3 n} \sum_{\ell = n^{3/4}}^{n} \left(2\left(\frac{e}{\xi}\right)^{\xi} \left(\frac{\beta e}{\lambda}\right)^{\lambda \rho} \right)^{\ell^2 p} \\
&\leq& 2^{3 n} \sum_{\ell = n^{3/4}}^{n} 2^{-\ell^2 p}
\leq n 2^{3 n} 2^{- n^{3/2} p} = o(1) \,,
\end{eqnarray*}
where the first equality follows since $d = \xi p$, the second inequality holds by our choice of $\beta$ and the last equality holds since $p \gg n^{-1/2}$. Hence, by Markov's inequality there are a.a.s. no $(\rho \ell, \ell, \varepsilon, \varepsilon', d, \lambda \ell p, 2 \ell p)$ bad configurations of type I in any subdigraph of $D$. 
\end{proof}

Let $\varepsilon, \varepsilon'$ be positive real numbers. Let $D=(V,E)$ be a digraph on $n$ vertices, let $q_1 = q_1(n)$, $q_2 = q_2(n)$, $\ell = \ell(n)$ and $r \leq n/\ell$ be positive integers, and let $0 < d = d(n) \leq 1$. Let $V_1, \ldots, V_r$ be pairwise disjoint subsets of $V$, of size $\ell$ each, such that, for every $1 \leq i \leq r$, the pair $(V_i, V_{i+1})$ is $(\varepsilon)$-regular with directed density $d$ (throughout this section $V_{r+1}$ should be read as $V_1$).

For $1 \leq i \leq r$, a vertex $u \in V$ is called \emph{$i$-bad of type II} (with respect to $D$, $\ell$, $V_1, \ldots, V_r$, $\varepsilon$, $\varepsilon'$, $d$, $q_1$, and $q_2$) if at least one of the following conditions holds

\begin{description}
\item [$(i)$] $u$ is bad of type I with respect to $D$, $\ell$, $V_{i-1}$, $V_i$, $\varepsilon$, $\varepsilon'$, $d$, $q_1$, and $q_2$.
\item [$(ii)$] $u$ is bad of type I with respect to $D$, $\ell$, $V_i$, $V_{i+1}$, $\varepsilon$, $\varepsilon'$, $d$, $q_1$, and $q_2$.
\item [$(iii)$] $u$ is bad of type I with respect to $D$, $\ell$, $V_{i+1}$, $V_{i+2}$, $\varepsilon$, $\varepsilon'$, $d$, $q_1$, and $q_2$.
\item [$(iv)$] $u \notin V_i \cup V_{i+1}$ and there exist sets $Q_i \subseteq N_D^-(u, V_i)$ and $Q_{i+1} \subseteq N_D^+(u, V_{i+1})$ of sizes $q_1 \leq |Q_i|, |Q_{i+1}| \leq q_2$ such that, for every $\tilde{Q}_i \subseteq Q_i$ and every $\tilde{Q}_{i+1} \subseteq Q_{i+1}$ such that $|\tilde{Q}_i| \geq (1 - \varepsilon')|Q_i|$ and $|\tilde{Q}_{i+1}| \geq (1 - \varepsilon')|Q_{i+1}|$, the pair $(\tilde{Q}_i, \tilde{Q}_{i+1})$ is \emph{not} $(\varepsilon')$-regular with directed density $d''$ for any $(1 - \varepsilon')^2 d \leq d'' \leq (1 + \varepsilon')^2 d$.
\end{description}  

Let $\alpha > 0$ be a constant. A vertex $u \in V$ is called \emph{bad of type II} (with respect to $D$, $\ell$, $V_1, \ldots, V_r$, $\alpha$, $\varepsilon$, $\varepsilon'$, $d$, $q_1$, and $q_2$) if there exists a set $I_u \subseteq [r]$ of size $|I_u| \geq \alpha r/40$ such that, $u$ is $i$-bad of type II for every $i \in I_u$.

Given sets $V_1, \ldots, V_r$ as above, let $T_2 \subseteq V$ denote the set of bad vertices of type II (with respect to these specific $V_1, \ldots, V_r$). The digraph $D$, the sets $V_1, \ldots, V_r$, and the set $T_2$ are said to form a \emph{$(|T_2|, r, \ell, \alpha, \varepsilon, \varepsilon', d, q_1, q_2)$ bad configuration of type II}. We will prove that a.a.s. no subdigraph of ${\mathcal D}(n,p)$ contains such a configuration with a large set $T_2$.

\begin{lemma} \label{badConfigurationsII}
Let $r \geq 2$ be an integer, let $n$ be a positive integer, $n^{-1/2} \ll p = p(n) \leq 1$, and $D \in {\mathcal D}(n,p)$. Let $0 < \alpha, \varepsilon', \rho, \lambda, \xi < 1$ be constants and let $d = \xi p$. Then, there exists $0 < \varepsilon = \varepsilon(\xi, \lambda, \rho, \varepsilon') \leq \varepsilon'$ such that, for every positive integer $n^{3/4} < \ell = \ell(n) \leq n/r$ satisfying $\ell \gg p^{-2}$, a.a.s. there are no $(\rho \ell, r, \ell, \alpha, \varepsilon, \varepsilon', d, \lambda \ell p, 2 \ell p)$ bad configurations of type II in any subdigraph of $D$.
\end{lemma}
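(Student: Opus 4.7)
The plan is to split the bad type~II vertices into four classes by pigeonholing on which of conditions (i)--(iv) is responsible for $i$-badness at many indices, then dispose of each class separately. Fix $u \in T_2$ with witness $I_u \subseteq [r]$ of size at least $\alpha r/40$. For each $i \in I_u$, assign $u$ a condition in $\{(i),(ii),(iii),(iv)\}$ that certifies its $i$-badness; by pigeonhole, a single condition is certified for $u$ at $\geq \alpha r/160$ indices, call it the \emph{primary condition} of $u$. Supposing for contradiction that $|T_2| = \rho\ell$, a second pigeonhole yields a subset $T_2' \subseteq T_2$ of size $\geq \rho\ell/4$ sharing a common primary condition, and it suffices to rule out each of the four resulting scenarios. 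The value of $\varepsilon$ will be taken as the minimum of the $\varepsilon$ that Lemma~\ref{badConfigurationsI} produces when applied with parameter $\rho_1 := \alpha\rho/10^4$, and the $\varepsilon_0(\beta,\varepsilon')$ of Corollary~\ref{cor::regularityOfSmallPairs} for a small auxiliary $\beta$ to be chosen.

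The three scenarios where the primary condition is (i), (ii), or (iii) are handled uniformly by invoking Lemma~\ref{badConfigurationsI}. Each of these conditions says precisely that $u$ is bad of type~I with respect to one of the pairs $(V_j, V_{j+1})$ determined by $i$. Lemma~\ref{badConfigurationsI} guarantees a.a.s.\ that in any subdigraph of $D$, every $(\varepsilon)$-regular pair of $\ell$-sets of density $d$ contains fewer than $\rho_1 \ell$ bad-type-I vertices; summed over the $r$ pairs of the configuration, this bounds the total number of (vertex, pair) type-I-bad incidences by $r\rho_1 \ell$. On the other hand, $\rho\ell/4$ vertices of $T_2'$ each contributing at least $\alpha r/160$ incidences would force at least $\alpha\rho r\ell/640$ incidences, contradicting the choice $\rho_1 < \alpha\rho/640$.

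For the remaining scenario, where the primary condition of every $u \in T_2'$ is (iv), I would imitate the proof of Lemma~\ref{badConfigurationsI} but with Corollary~\ref{cor::regularityOfSmallPairs} replacing Proposition~\ref{regularSmallSets}. Fix disjoint $V_1,\ldots,V_r$ of size $\ell$, a realization of $d\ell^2$ arcs inside each pair $(V_i, V_{i+1})$ so that the pair is $(\varepsilon)$-regular, and a set $T_2'$ of size $\rho\ell/4$. Conditional on these pair-arcs (on which Corollary~\ref{cor::regularityOfSmallPairs} depends to identify the bad pairs $(Q_i, Q_{i+1})$), the event ``(iv) holds for $u$ at index $i$'' depends only on the arcs $(v,u)$ with $v \in V_i$ and the arcs $(u,w)$ with $w \in V_{i+1}$; as $i$ and $u$ vary, these arc-sets are pairwise disjoint in $\mathcal{D}(n,p)$, so the events are mutually independent. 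Corollary~\ref{cor::regularityOfSmallPairs}, followed by union bounds over the sizes of $Q_i$ and $Q_{i+1}$, gives
\[
P := \Pr(\text{(iv) at } i \mid \text{pair-arcs}) \leq 2\sum_{q_i,\,q_{i+1}=\lambda\ell p}^{2\ell p}(\beta^{q_i}+\beta^{q_{i+1}})\binom{\ell}{q_i}\binom{\ell}{q_{i+1}} p^{q_i+q_{i+1}} \leq (C_1\beta^{\lambda})^{\ell p},
\]
for a constant $C_1 = C_1(\lambda)$, by the same ``maximized at the lower endpoint'' estimate used in Lemma~\ref{badConfigurationsI}. Hence the probability that every $u \in T_2'$ is $i$-bad by (iv) at $\geq \alpha r/160$ indices is at most $\bigl(\binom{r}{\alpha r/160} P^{\alpha r/160}\bigr)^{\rho\ell/4}$. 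Multiplying by the number of choices of $V_1,\ldots,V_r$ ($\leq n^{r\ell}$), of arc-realizations inside the pairs ($\leq (e/\xi)^{\xi p\ell^2 r}$ after absorbing the $p^{d\ell^2 r}$ factor), and of $T_2'$ ($\leq n^{\rho\ell}$), and choosing $\beta$ small enough in terms of $\alpha,\rho,\xi,\lambda$ so that the negative exponent dominates $\xi\log(e/\xi)$, one obtains an expectation of the form $n^{O(r\ell)} \cdot 2^{-\Omega(r\ell^2 p)}$. Since $\ell > n^{3/4}$ and $p \gg n^{-1/2}$ imply $\ell p \gg n^{1/4} \gg \log n$, this expectation is $o(1)$ and Markov's inequality closes the argument. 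The main obstacle is the independence step: one must carefully separate the pair-arcs (which determine which $(Q_i, Q_{i+1})$ are bad and on which we condition) from the arcs incident to $u$ (whose randomness is actually exploited); once this separation is clean, the rest is a tuning exercise modeled exactly on Lemma~\ref{badConfigurationsI}.
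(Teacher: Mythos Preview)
Your overall architecture matches the paper's: pigeonhole on the four conditions, dispose of (i)--(iii) via Lemma~\ref{badConfigurationsI}, and run a first-moment argument with Corollary~\ref{cor::regularityOfSmallPairs} for (iv). Your double-counting for (i)--(iii) is equivalent to the paper's averaging and is fine.

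The gap is in your treatment of (iv). You claim that as $(u,i)$ ranges over $T_2' \times [r]$ the arc-sets $\{(v,u): v \in V_i\} \cup \{(u,w): w \in V_{i+1}\}$ are pairwise disjoint, and hence the events ``(iv) at $i$ for $u$'' are mutually independent. This is false across different $u$'s. If $u,u' \in T_2'$ with $u' \in V_i$ and $u \in V_{i'+1}$, the single arc $(u',u)$ lies in the arc-set for $(u,i)$ (as an in-arc to $u$) and simultaneously in the arc-set for $(u',i')$ (as an out-arc from $u'$). Condition (iv) only forces $u \notin V_i \cup V_{i+1}$ and $u' \notin V_{i'} \cup V_{i'+1}$; it does not prevent $u' \in V_i$. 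Since the events are increasing in the arc indicators, FKG goes the wrong way, so you cannot recover the product bound $\bigl(\binom{r}{\alpha r/160} P^{\alpha r/160}\bigr)^{\rho\ell/4}$ without further work.

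The paper sidesteps this by applying to (iv) the \emph{same} averaging you already used for (i)--(iii): from the $\rho\ell/4$ vertices each (iv)-bad at $\geq \alpha r/160$ indices, a single index $j$ carries a set $S'$ of at least $\alpha\rho\ell/640$ vertices all satisfying (iv) with respect to the one pair $(V_j,V_{j+1})$. Because (iv) forces $S' \cap (V_j \cup V_{j+1}) = \emptyset$, the arc-sets for distinct $u \in S'$ are now genuinely disjoint and independence is immediate. The first-moment calculation then runs over just two $\ell$-sets $X,Y$ and one set $B$ (at most $2^{3n}$ choices) rather than your $n^{r\ell}$ choices of $V_1,\ldots,V_r$, and the rest is exactly your estimate $P \leq (2e^4\beta^\lambda)^{\ell p}$ raised to the $|S'|$th power. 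In short, average first, then do the first-moment on a single pair; this is both the fix for your independence issue and a simplification of the union bound.
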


\begin{proof}
Since $\rho, \lambda, \xi$ are positive constants, by choosing $\beta > 0$ to be sufficiently small we can guarantee that
$\left(\frac{e}{\xi}\right)^{\xi} (2 e^4 \beta^{\lambda})^{\alpha \rho/640} \leq 1/2$. Let $0 < \varepsilon_0^1 = \varepsilon_0^1(\xi, \lambda, \alpha \rho/640, \varepsilon') \leq \varepsilon'$ be the constant whose existence follows from Lemma~\ref{badConfigurationsI}. Let $\varepsilon_0^2 = \varepsilon_0^2(\beta, \varepsilon') > 0$ and $C = C(\beta, \varepsilon')$ be the constants whose existence follows from Corollary~\ref{cor::regularityOfSmallPairs}. Let $\varepsilon = \min \{\varepsilon_0^1, \varepsilon_0^2\}$.

We would like to bound from above the expected number of large bad configurations of type II in any subdigraph of $D$. Assume then that $D, V_1, \ldots, V_r$ and $T_2$ form a $(\rho \ell, r, \ell, \alpha, \varepsilon, \varepsilon', d, \lambda \ell p, 2 \ell p)$ bad configuration of type II. Note that by definition, for every vertex $v \in T_2$ one of the conditions $(i)-(iv)$ holds for at least $(\alpha r/40)/4 = \alpha r/160$ indices. This implies that $T_2$ contains a subset $S$ of size at least $\rho \ell/4$ such that for all vertices in $S$ the same condition holds for at least $\alpha r/160$ indices.

Given a subdigraph $D'$ of $D$, assume first that there exists a set $S \subseteq T_2$ of size $|S| \geq \rho \ell/4$ such that for every $x \in S$, condition $(i)$ holds for at least $\alpha r/160$ indices $i \in I_x$ (with respect to these $V_1, \ldots, V_r$). It follows by averaging that there must exist some $1 \leq j \leq r$ and a set $S' \subseteq S$ of size $|S'| \geq (\rho \ell/4 \cdot \alpha r/160)/r = \alpha \rho \ell/640$ such that condition $(i)$ holds for $j$ and for every $y \in S'$. Therefore, the digraph $D'$, the sets $V_{j-1}, V_j$ and the set $S'$ form an $(\alpha \rho \ell/640, \ell, \varepsilon, \varepsilon', d, \lambda \ell p, 2 \ell p)$ bad configuration of type I. However, by our choice of $\varepsilon$ and by Lemma~\ref{badConfigurationsI} the probability of this happening is $o(1)$. Using an analogous argument for conditions $(ii)$ and $(iii)$, we conclude that it suffices to prove that the probability that there exists a set $S \subseteq T_2$ of size $|S| \geq \rho \ell/4$ such that for every $x \in S$, condition $(iv)$ holds for at least $\alpha r/160$ indices $i \in I_x$ is $o(1)$. Let $S$ be such a set. It again follows by averaging that there must exist some $1 \leq j \leq r$ and a set $S' \subseteq S$ of size $|S'| \geq (\rho \ell/4 \cdot \alpha r/160)/r = \alpha \rho \ell/640$ such that condition $(iv)$ holds for $j$ and for every $y \in S'$. It thus suffices to prove that the probability that there exist pairwise disjoint vertex sets $X,Y$ and $B$, where $|X| = |Y| = \ell$ and $|B| = \alpha \rho \ell/640$ such that the pair $(X,Y)$ is $(\varepsilon)$-regular with directed density $d$ and condition $(iv)$ holds for every $u \in B$ with respect to $X$ and $Y$ is $o(1)$. As shown below, this can be done similarly to the proof of Lemma~\ref{badConfigurationsI}.

There are at most $\binom{n}{\ell}^2 \binom{n}{\alpha \rho \ell/640} \leq 2^{3n}$ ways to choose $X,Y$ and $B$. The number of ways to choose $d \ell^2$ arcs, directed from $X$ to $Y$, is $\binom{\ell^2}{d \ell^2}$ and each such arc set appears in a subdigraph of $D$ with probability at most $p^{d \ell^2}$.

Since $C$ and $\lambda$ are constants, $\ell \gg p^{-2}$ and $d = \Theta(p)$ we have that $\lambda \ell p = \omega(p^{-1}) > C d^{-1}$. Therefore, if the pair $(X,Y)$ is $(\varepsilon)$-regular, then from Corollary~\ref{cor::regularityOfSmallPairs} it follows that, for any fixed integers $\lambda \ell p \leq q_X, q_Y \leq 2 \ell p$, there are at most $(\beta^{q_X} + \beta^{q_Y}) \binom{|X|}{q_X} \binom{|Y|}{q_Y}$ pairs $(Q_X, Q_Y)$ such that $Q_X \subseteq X$ is of size $q_X$ and $Q_Y \subseteq Y$ is of size $q_Y$ and, moreover, for every $\tilde{Q}_X \subseteq Q_X$ and every $\tilde{Q}_Y \subseteq Q_Y$ such that $|\tilde{Q}_X| \geq (1 - \varepsilon') q_X$ and $|\tilde{Q}_Y| \geq (1 - \varepsilon') q_Y$, the pair $(\tilde{Q}_X, \tilde{Q}_Y)$ is \emph{not} $(\varepsilon')$-regular with directed density $d''$ for any $(1 - \varepsilon')^2 d \leq d'' \leq (1 + \varepsilon')^2 d$. Let $u \in B$ be an arbitrary vertex. Expose all arcs of $D$ with one endpoint in $\{u\}$ and the other in $X \cup Y$. For any subdigraph $D'$ of $D$ and every $Q_X \subseteq X$ and $Q_Y \subseteq Y$, the probability that $Q_X \subseteq N_{D'}^-(u,X)$ and $Q_Y \subseteq N_{D'}^+(u,Y)$ is at most $p^{|Q_X|+|Q_Y|}$. Hence, the probability that condition $(iv)$ holds for $u$ with respect to $X$ and $Y$ is at most   
\begin{eqnarray} \label{eq::iv}
&& \sum_{q_X = \lambda \ell p}^{2 \ell p} \sum_{q_Y = \lambda \ell p}^{2 \ell p} (\beta^{q_X} + \beta^{q_Y}) \binom{\ell}{q_X} \binom{\ell}{q_Y} p^{q_X + q_Y} 
\leq 2 \beta^{\lambda \ell p} \sum_{q_X = \lambda \ell p}^{2 \ell p} \sum_{q_Y = \lambda \ell p}^{2 \ell p} \left(\frac{e \ell p}{q_X} \right)^{q_X} \left(\frac{e \ell p}{q_Y} \right)^{q_Y} \nonumber \\ 
&\leq& 2 \beta^{\lambda \ell p} (2 \ell p)^2 e^{4 \ell p} = 8 (\ell p)^2 (e^4 \beta^{\lambda})^{\ell p} \leq (2 e^4 \beta^{\lambda})^{\ell p} \,,
\end{eqnarray}
where the first inequality holds since $f(q) := \beta^q$ is decreasing in the range $\lambda \ell p \leq q \leq 2 \ell p$ as $\beta < 1$, the second inequality holds since $h(q) := \left(\frac{e \ell p}{q} \right)^q$ is increasing in the range $\lambda \ell p \leq q \leq \ell p$ and in the last inequality we used the bound $8(\ell p)^2 \leq 2^{\ell p}$ which holds since $\ell p \gg 1$.

Let $u, u' \in B$ be any two vertices. Since $B$ is disjoint from $X \cup Y$, the validity of $(iv)$ for $u$ and for $u'$ (both with respect to $X$ and $Y$ and given the arcs between $X$ and $Y$) involves disjoint sets of edges. Hence, the events ``$(iv)$ holds for $u$ with respect to $X$ and $Y$'' and ``$(iv)$ holds for $u'$ with respect to $X$ and $Y$'' are independent. Thus, using~\eqref{eq::iv} we conclude that the probability that condition $(iv)$ holds for every $x \in B$ is at most
\begin{equation} \label{eq::manyVerticesManyIndices}
\left((2 e^4 \beta^{\lambda})^{\ell p} \right)^{\alpha \rho \ell/640} = (2 e^4 \beta^{\lambda})^{\alpha \rho \ell^2 p/640}
\end{equation}

Summing over all appropriate choices of $\ell, X, Y, B$ and $d \ell^2$ arcs, directed from $X$ to $Y$, and using estimate~\eqref{eq::manyVerticesManyIndices}, by linearity of expectation, we conclude that the expected number of $(\rho \ell, r, \ell, \alpha, \varepsilon, \varepsilon', d, \lambda \ell p, 2 \ell p)$ bad configurations of type II in some subdigraph of $D$ is at most

\begin{eqnarray*}
\sum_{\ell = n^{3/4}}^{n} 2^{3 n} \binom{\ell^2}{d \ell^2} p^{d \ell^2} (2 e^4 \beta^{\lambda})^{\alpha \rho \ell^2 p/640} 
&\leq& 2^{3 n} \sum_{\ell = n^{3/4}}^{n} \left(\frac{ep}{d}\right)^{d \ell^2} (2 e^4 \beta^{\lambda})^{\alpha \rho \ell^2 p/640}\\
&=& 2^{3 n} \sum_{\ell = n^{3/4}}^{n} \left(\left(\frac{e}{\xi}\right)^{\xi} (2 e^4 \beta^{\lambda})^{\alpha \rho/640} \right)^{\ell^2 p}\\
&\leq& n 2^{3 n} 2^{- n^{3/2} p} = o(1) \,,
\end{eqnarray*}
where the first equality follows since $d = \xi p$, the second inequality holds by our choice of $\beta$ and in the last equality we used the assumed lower bound on $p$. Hence, by Markov's inequality there are a.a.s. no $(\rho \ell, r, \ell, \alpha, \varepsilon, \varepsilon', d, \lambda \ell p, 2 \ell p)$ bad configurations of type II in any subdigraph of $D$.
\end{proof}

The following lemma will be useful in the next section when we will show how to build a long cycle which can be used to absorb the remaining vertices so as to create a Hamilton cycle. 

\begin{lemma} \label{lem::manyGoodPairs}
Let $\alpha > 0$ be a constant, let $n$ be a positive integer and let $\log n/ \sqrt{n} \ll p = p(n) \leq 1$. Let $D' = (V,E)$ be a digraph obtained from $D \in {\mathcal D}(n,p)$ by deleting at most $(1/2 - \alpha) \deg_D^+(u)$ out-going arcs and at most $(1/2 - \alpha) \deg_D^-(u)$ in-going arcs at every vertex $u \in V(D)$. Let $\ell$, $V_1, \ldots, V_r$, $\varepsilon$, $\varepsilon'$, $d$, $q_1$, and $q_2$ be as in the definition of bad vertices of type II and assume further that $r \ell (1 - \alpha/4 - 2/r) (1 - \varepsilon - \alpha/2) \geq (1 - \alpha) n$. Let $U$ denote the set of all vertices $u \in V$ which satisfy the following two properties:
\begin{description}
\item [(a)] $u$ is not bad of type II (with respect to $D'$, $\ell$, $V_1, \ldots, V_r$, $\varepsilon$, $\varepsilon'$, $d$, $q_1$, and $q_2$).
\item [(b)] $\deg^+_D(u, V_i) \geq (1 - \varepsilon) \ell p$ and $\deg^-_D(u, V_i) \geq (1 - \varepsilon) \ell p$ for every $1 \leq i \leq r$. 
\end{description}
Then a.a.s. for every $u \in U$ there exists a set $I_u \subseteq [r]$ such that all of the following properties hold:
\begin{description}
\item [(i)] $|I_u| \geq \alpha r/40$.
\item [(ii)] $u \notin \bigcup_{i \in I_u} (V_i \cup V_{i+1})$.
\item [(iii)] $\deg^-_{D'}(u,V_i) \geq \alpha \ell p/2$ and $\deg^+_{D'}(u,V_{i+1}) \geq \alpha \ell p/2$ for every $i \in I_u$.
\item [(iv)] $(j - i) \mod r \geq 5$ and $(i - j) \mod r \geq 5$ for every $i \neq j \in I_u$.
\item [(v)] $u$ is not $i$-bad of type II for any $i \in I_u$ (with respect to $D'$, $\ell$, $V_1, \ldots, V_r$, $\varepsilon$, $\varepsilon'$, $d$, $q_1$, and $q_2$).
\end{description}
\end{lemma}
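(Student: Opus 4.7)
The plan is to build $I_u$ by first identifying many indices $i \in [r]$ at which $u$ has sufficiently many in-neighbors in $V_i$ in $D'$ and sufficiently many out-neighbors in $V_{i+1}$ in $D'$, then discarding those few indices at which $u$ lies in $V_i \cup V_{i+1}$ or is $i$-bad of type II, and finally thinning the remainder greedily to enforce the cyclic spacing~(iv).

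First I would fix the a.a.s.\ Chernoff event $\deg^{\pm}_D(u) = (1 \pm o(1)) n p$ for every $u \in V$, which, together with the definition of $D'$, forces the total number of in-arcs of $u$ deleted in passing from $D$ to $D'$ to be at most $(1/2 - \alpha)(1 + o(1)) n p$. Put
\[
L^- := \{i \in [r] : \deg^-_{D'}(u, V_i) \geq \alpha \ell p / 2\}, \qquad L^+ := \{i \in [r] : \deg^+_{D'}(u, V_{i+1}) \geq \alpha \ell p / 2\}.
\]
For every $i \notin L^-$, property~(b) combined with $\deg^-_{D'}(u, V_i) < \alpha \ell p / 2$ forces the loss $\Delta_i := \deg^-_D(u, V_i) - \deg^-_{D'}(u, V_i)$ to exceed $(1 - \varepsilon - \alpha/2) \ell p$; summing over $i \notin L^-$ and using the total in-deletion bound yields
\[
(r - |L^-|)(1 - \varepsilon - \alpha/2) \ell p \;<\; \sum_{i=1}^r \Delta_i \;\leq\; (1/2 - \alpha)(1 + o(1)) n p.
\]
Dividing by $(1 - \varepsilon - \alpha/2) \ell p$ and invoking the hypothesis on $r\ell$ to bound $n/\ell$ from above, the factor $(1 - \varepsilon - \alpha/2)$ cancels and one is left with $r - |L^-| \leq (1 + o(1)) r (1/2 - \alpha)(1 - \alpha/4 - 2/r)/(1 - \alpha)$; a short expansion shows that for sufficiently small $\alpha$ and sufficiently large $r$ this gives $|L^-| \geq (1/2 + \alpha/2) r$. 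The completely analogous argument with out-degrees and out-deletions yields $|L^+| \geq (1/2 + \alpha/2) r$.

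Set $J := L^- \cap L^+$; by inclusion--exclusion $|J| \geq |L^-| + |L^+| - r \geq \alpha r$. Next I would remove from $J$ the at most two indices $i$ for which $u \in V_i \cup V_{i+1}$, together with the set $B_u := \{i \in [r] : u \text{ is } i\text{-bad of type II}\}$, which by property~(a) has size strictly less than $\alpha r / 40$. The remaining set $J'$ has size at least $\alpha r - \alpha r / 40 - 2 \geq \alpha r / 2$ for $r$ sufficiently large.

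Finally I would extract $I_u \subseteq J'$ satisfying~(iv) by a greedy procedure: scanning $[r]$ cyclically, pick each element of $J'$ at cyclic distance at least $5$ from the previously chosen element (discarding the last pick if it violates the cyclic constraint with the first). This yields $|I_u| \geq |J'|/5 - 1 \geq \alpha r / 40$ for $r$ large, giving~(i); (ii) and~(v) hold by the exclusions, (iii) by membership in $L^- \cap L^+$, and (iv) by the greedy spacing. The principal technical obstacle is squeezing a strict excess over $r/2$ out of $|L^{\pm}|$: one cannot hope that $\deg^-_D(u, V_i) \leq 2 \ell p$ for all $u \in U$ and all $i$, so the argument must be run with the \emph{deletions} $\Delta_i$, which by~(b) are automatically large whenever the target degree is not met, and this is precisely why the finely balanced factor $(1 - \alpha/4 - 2/r)(1 - \varepsilon - \alpha/2)$ in the hypothesis on $r\ell$ is what makes the bookkeeping go through.
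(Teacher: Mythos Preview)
Your proposal is correct and follows essentially the same approach as the paper: lower-bound the number of indices where property~(iii) holds by counting deleted arcs against the budget $(1/2-\alpha)\deg_D^{\pm}(u)$ via property~(b), remove the $\leq 2$ indices violating~(ii) and the $< \alpha r/40$ indices violating~(v), and thin by a factor of~$5$ to enforce~(iv). The only organisational differences are that the paper handles the in- and out-degree constraints jointly (if index~$i$ fails~(iii) then \emph{either} $\geq (1-\varepsilon-\alpha/2)\ell p$ in-arcs from~$V_i$ \emph{or} that many out-arcs to~$V_{i+1}$ were deleted, and these arc sets are disjoint over~$i$), obtaining $|I_u^{(iii)}|\geq \alpha r/4$ directly rather than via your intersection $L^-\cap L^+$, and the paper thins first and removes the $i$-bad indices afterwards rather than in your order; both orderings reach the target $\alpha r/40$. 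Your remark that the argument needs ``sufficiently small~$\alpha$'' is overly cautious---the inequality $(1/2-\alpha)(1-\alpha/4-2/r)/(1-\alpha)<1/2-\alpha/2$ in fact holds for every $\alpha\in(0,1/2)$ once $r$ is large.
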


\begin{proof}
Asymptotically almost surely $(1 - o(1)) n p \leq \deg_D^+(u) \leq (1 + o(1)) n p$ and $(1 - o(1)) n p \leq \deg_D^-(u) \leq (1 + o(1)) n p$ hold for every $u \in V$. We will thus assume this throughout the proof. The remainder of the proof is deterministic. Fix an arbitrary vertex $u \in U$. Let $I_u^{(ii)} = \{1 \leq i \leq r : u \notin V_i \cup V_{i+1}\}$; clearly $|I_u^{(ii)}| \geq r-2$. Let $I_u^{(iii)}$ denote the set of indices of $I_u^{(ii)}$ which satisfy Property (iii) above; we claim that $|I_u^{(iii)}| \geq \alpha r/4$. Indeed, suppose for a contradiction that $|I_u^{(iii)}| < \alpha r/4$. Fix some $i \in I_u^{(ii)}$ for which Property (iii) is not satisfied. Since $u \in U$, it follows by Property (b) above that $|\{(v,u) \in E(D) \setminus E(D') : v \in V_i\}| \geq (1 - \varepsilon - \alpha/2) \ell p$ or $|\{(u,v) \in E(D) \setminus E(D') : v \in V_{i+1}\}| \geq (1 - \varepsilon - \alpha/2) \ell p$. Hence, in order to obtain $D'$ from $D$, one has to delete at least 
$$
(r - \alpha r/4 - 2) \cdot (1 - \varepsilon - \alpha/2) \ell p \geq (1 - \alpha) n p > (1/2 - \alpha) \deg_D^+(u) + (1/2 - \alpha) \deg_D^-(u)
$$ 
arcs which are incident with $u$, contrary to our assumption. 

By linearly ordering the elements of $I_u^{(iii)}$ (in the natural way), and keeping every fifth element we clearly end up with a set $I_u^{(iv)} \subseteq I_u^{(iii)}$ of size $|I_u^{(iv)}| \geq \alpha r/20$ which satisfies Properties (ii), (iii) and (iv) above. Finally, it follows by Property (a) above that there are less than $\alpha r/40$ indices $1 \leq i \leq r$ for which $u$ is $i$-bad. In particular, it follows that there exists a set $I_u \subseteq I_u^{(iv)}$ of size $|I_u| \geq \alpha r/20 - \alpha r/40 = \alpha r/40$ which satisfies Properties (ii), (iii), (iv) and (v) above.    
\end{proof}

We are now ready to describe the different types of steps we will use to build a long path in the next section. Each such step will consist of an arc $(x,y)$ where both $x$ and $y$ exhibit certain desirable properties. We thus start by describing such vertices.

\begin{definition} \label{def::niceVertex}
Let $\varepsilon, \varepsilon'$ be positive real numbers. Let $D=(V,E)$ be a digraph on $n$ vertices, let $q_1 = q_1(n)$, $q_2 = q_2(n)$, $\ell = \ell(n)$ and $r \leq n/\ell$ be positive integers, and let $0 < d = d(n) \leq 1$. Let $V_1, \ldots, V_r$ be pairwise disjoint subsets of $V$, of size $\ell$ each, such that, for every $1 \leq i \leq r$, the pair $(V_i, V_{i+1})$ is $(\varepsilon)$-regular with directed density $d$. Let $X \subseteq V$ be some set and let $1 \leq s \leq r$. A vertex $x \in V_s \setminus X$ is called
\begin{description}
\item [(i)] nice with respect to $X$ (and $D$, $\ell$, $V_1, \ldots, V_r$, $\varepsilon$, $\varepsilon'$, $d$, $q_1$ and $q_2$) if $q_2 \geq \deg_D^+(x, V_{s+1} \setminus X) \geq (1 - \varepsilon')(1 - \varepsilon) d |V_{s+1} \setminus X| \geq q_1$. 
\item [(ii)] backwards nice with respect to $X$ if $q_2 \geq \deg_D^-(x, V_{s-1} \setminus X) \geq (1 - \varepsilon')(1 - \varepsilon) d |V_{s-1} \setminus X| \geq q_1$.
\item [(iii)] very nice with respect to $X$ if it is both nice and backwards nice. 
\end{description}
\end{definition} 
The purpose of the set $X$ in the above definition (and in the next few definitions and lemmas) is to make this definition more flexible. This will be useful in the next section, where we will want to use certain properties of nice vertices with respect to a set $X$ which will constantly change.

We will make use of the following three types of basic steps.

\begin{definition} \label{def::stepTypes}
Let $D$, $\ell$, $V_1, \ldots, V_r$, $\varepsilon$, $\varepsilon'$, $d$, $q_1$, $q_2$, $X$, $s$ and $x$ be as in Definition~\ref{def::niceVertex}.  
\begin{description}
\item [(i)] A standard forward step from $x$ with respect to $X$ is an arc $(x,y) \in E(D)$ such that $y \in V_{s+1} \setminus X$ is nice. 
\item [(ii)] A random forward step from $x$ with respect to $X$ is an arc $(x,y) \in E(D)$ such that $y$ is chosen uniformly at random among all nice vertices of $N_D^+(x, V_{s+1} \setminus X)$.
\item [(iii)] A standard backward step from $x$ with respect to $X$ is an arc $(y,x) \in E(D)$ such that $y \in V_{s-1} \setminus X$ is backwards nice.
\end{description}
\end{definition}

Next, we describe sufficient conditions for such steps to exist.

\begin{lemma} \label{lem::stepsExist}
Let $D$, $\ell$, $V_1, \ldots, V_r$, $\varepsilon$, $\varepsilon'$, $d$, $q_1$, $q_2$, $X$, $s$ and $x$ be as in Definition~\ref{def::niceVertex}. Assume further that $q_1 \leq (1 - \varepsilon')(1 - \varepsilon) d \varepsilon' \ell$, that $|V_i \cap X| \leq (1 - \varepsilon') \ell$ holds for every $1 \leq i \leq r$ and that $\deg_D^+(v, V_i) \leq q_2$ holds for every $v \in V(D) \setminus X$ and every $1 \leq i \leq r$. If $x \in V_s \setminus X$ is a nice vertex and $x$ is not bad of type I with respect to $V_{s+1}, V_{s+2}$, then there exists a nice vertex $y \in N_D^+(x, V_{s+1} \setminus X)$, that is, there exists a standard forward step from $x$. Similarly, if $x$ is a backwards nice vertex and $x$ is not bad of type I with respect to $V_{s-2}, V_{s-1}$, then there exists a nice vertex $y \in N_D^-(x, V_{s-1} \setminus X)$, that is, there exist a standard backward step from $x$.  
\end{lemma}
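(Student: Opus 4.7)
The plan is to handle both parts by the same three-step template: use the niceness of $x$ to produce a candidate set $Q$ whose size lies in the window $[q_1,q_2]$; use the non-badness of $x$ to upgrade $Q$ to a subset $\tilde Q$ that is $(\varepsilon')$-regular with $V_{s+2}$ (respectively $V_{s-2}$) at density close to $d$; and finally use Lemma~\ref{smallDegreeVertices} to extract from $\tilde Q$ a vertex $y$ whose degree into $V_{s+2}\setminus X$ (respectively $V_{s-2}\setminus X$) witnesses niceness (respectively backward niceness).

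For the forward step I would set $Q := N_D^+(x, V_{s+1}\setminus X)$. By niceness of $x$, $q_1\le|Q|\le q_2$, so $Q$ fits exactly the range in condition~(I.1). Since $x$ is not bad of type~I with respect to $V_{s+1},V_{s+2}$, the negation of~(I.1) applied to this particular $Q\subseteq N_D^+(x,V_{s+1})$ yields $\tilde Q\subseteq Q$ with $|\tilde Q|\ge(1-\varepsilon')|Q|$ such that $(\tilde Q,V_{s+2})$ is $(\varepsilon')$-regular with directed density $d'\in[(1-\varepsilon)d,(1+\varepsilon)d]$. The hypothesis $|V_i\cap X|\le(1-\varepsilon')\ell$ then gives $|V_{s+2}\setminus X|\ge\varepsilon'\ell=\varepsilon'|V_{s+2}|$, which is the size condition needed to feed $V_{s+2}\setminus X$ into Lemma~\ref{smallDegreeVertices} as the subset of the $B$-side of the regular pair $(\tilde Q,V_{s+2})$. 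That lemma produces a subset of $\tilde Q$ of size at least $(1-\varepsilon')|\tilde Q|$, every vertex $y$ of which satisfies
$$
\deg_D^+(y,V_{s+2}\setminus X)\ \ge\ (1-\varepsilon')d'\,|V_{s+2}\setminus X|\ \ge\ (1-\varepsilon')(1-\varepsilon)d\,|V_{s+2}\setminus X|.
$$
The hypothesis $q_1\le(1-\varepsilon')(1-\varepsilon)d\varepsilon'\ell$ combined with $|V_{s+2}\setminus X|\ge\varepsilon'\ell$ shows this lower bound is at least $q_1$, and the assumed uniform upper bound $\deg_D^+(v,V_i)\le q_2$ gives the matching upper bound. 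Hence any such $y$ is nice, and at least one exists because $(1-\varepsilon')|\tilde Q|\ge(1-\varepsilon')^2 q_1>0$. Since $y\in\tilde Q\subseteq N_D^+(x,V_{s+1}\setminus X)$, the arc $(x,y)$ is a standard forward step.

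The backward step is the mirror argument: apply the negation of~(I.2) to $Q:=N_D^-(x,V_{s-1}\setminus X)$, whose size lies in $[q_1,q_2]$ by backward niceness of $x$, to obtain $\tilde Q\subseteq Q$ such that $(V_{s-2},\tilde Q)$ is $(\varepsilon')$-regular with density close to $d$; then invoke the in-degree half of Lemma~\ref{smallDegreeVertices} with $V_{s-2}\setminus X$ as the subset of $V_{s-2}$ to produce a $y\in\tilde Q$ with $\deg_D^-(y,V_{s-2}\setminus X)$ in the window required for backward niceness, and the arc $(y,x)$ is the desired standard backward step.

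I do not expect a genuine conceptual obstacle; the lemma is essentially a direct consequence of the definitions once the parameters have been lined up. The only care needed is the bookkeeping verifying $|V_{s+2}\setminus X|\ge\varepsilon'|V_{s+2}|$ (to make sub-pair regularity meaningful) and $(1-\varepsilon')(1-\varepsilon)d|V_{s+2}\setminus X|\ge q_1$ (to certify the lower bound of niceness), both of which follow directly from the three quantitative hypotheses on $q_1$, on $|V_i\cap X|$, and on the uniform upper bound on degrees into each $V_i$.
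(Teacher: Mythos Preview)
Your proof is correct and follows essentially the same approach as the paper's: set $Y=N_D^+(x,V_{s+1}\setminus X)$ (the paper's $Y$ is your $Q$), use the negation of (I.1) to pass to a large subset $Z\subseteq Y$ (your $\tilde Q$) with $(Z,V_{s+2})$ being $(\varepsilon')$-regular of density close to $d$, and then apply Lemma~\ref{smallDegreeVertices} with the target set $V_{s+2}\setminus X$ to extract a nice $y$. The backward step is handled symmetrically in both, and the paper likewise omits the details.
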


\begin{proof}
We will prove the existence of a standard forward step; the existence of a standard backward step can be proved analogously. Let $Y = N_D^+(x, V_{s+1} \setminus X)$. Since $x$ is nice, it follows that $q_2 \geq |Y| \geq (1 - \varepsilon')(1 - \varepsilon) d |V_{s+1} \setminus X| \geq q_1$. Since, moreover, $x$ is not bad of type I with respect to $V_{s+1}, V_{s+2}$ and the pair $(V_{s+1}, V_{s+2})$ is $(\varepsilon)$-regular with directed density $d$, it follows that there exists a set $Z \subseteq Y$ such that $|Z| \geq (1 - \varepsilon')|Y|$ and the pair $(Z, V_{s+2})$ is $(\varepsilon')$-regular with directed density $d'$ for some $d' \geq (1 - \varepsilon) d$. Since $|V_{s+2} \setminus X| \geq \varepsilon' \ell$, it follows by Lemma~\ref{smallDegreeVertices} that there exists a vertex $y \in Z$ such that $\deg_D^+(y, V_{s+2} \setminus X) \geq (1 - \varepsilon')(1 - \varepsilon) d |V_{s+2} \setminus X| \geq q_1$. Since $y \notin X$, it follows from our assumption that $\deg_D^+(y, V_{s+2} \setminus X) \leq q_2$ and thus $y$ is nice.   
\end{proof}

We will also make use of the following composite steps which consist of several simple ones. The first of these consists of six arcs and is used to absorb a specific vertex into a path.

\begin{definition} \label{def::bigStep}
Let $D$, $\ell$, $V_1, \ldots, V_r$, $\varepsilon$, $\varepsilon'$, $d$, $q_1$, $q_2$, $X$, $s$ and $x$ be as in Definition~\ref{def::niceVertex} and let $v \in V(D)$ be a vertex. A big step from $x$ via $v$ with respect to $X$ consists of six arcs $(x, y_1), (y_1, y_2), (y_2, y_3)$, $(y_3, v), (v, y_4)$ and $(y_4, y_5)$ of $E(D)$ such that $y_1, y_2, y_3, y_4, y_5 \in V(D) \setminus X$ and $y_5$ is nice.  
\end{definition}

\begin{lemma} \label{lem::bigstepExists}
Let $D$, $\ell$, $V_1, \ldots, V_r$, $\varepsilon$, $\varepsilon'$, $d$, $q_1$, $q_2$, $X$, $s$, $x$ and $v$ be as in Definition~\ref{def::bigStep}. Let $I_v = N^-_D(v, V \setminus X)$, $\bar{I}_v = N^-_D(I_v, V \setminus X)$, $O_v = N^+_D(v, V \setminus X)$ and $\bar{O}_v = N^+_D(O_v, V \setminus X)$. Assume that $|V_{s+2} \cap \bar{I}_v| \geq \ell/3$ and that there exists $1 \leq j \leq r$ such that $|V_j \cap \bar{O}_v| \geq \ell/3$. Assume further that $q_1 \leq (1 - \varepsilon')(1 - \varepsilon) d \varepsilon' \ell$, that $\varepsilon' \leq 0.01$, that $|V_i \cap X| \leq \ell/4$ holds for every $1 \leq i \leq r$, and that $\deg_D^+(w, V_i) \leq q_2$ holds for every $w \in V(D) \setminus X$ and every $1 \leq i \leq r$. If $x \in V_s \setminus X$ is a nice vertex and $x$ is not bad of type I with respect to $V_{s+1}, V_{s+2}$, then there exists a big step from $x$ via $v$.
\end{lemma}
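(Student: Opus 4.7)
The plan is to build the six arcs of the big step in two independent phases: a ``forward'' phase assembling $x\to y_1\to y_2\to y_3\to v$, followed by a ``forward-from-$v$'' phase assembling $v\to y_4\to y_5$. The two density assumptions $|V_{s+2}\cap\bar I_v|\geq\ell/3$ and $|V_j\cap\bar O_v|\geq\ell/3$ are there precisely so that each phase can be carried out independently using only the $(\varepsilon)$-regularity of consecutive pairs $(V_i,V_{i+1})$ together with Lemma~\ref{smallDegreeVertices}.

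For Phase~A, I first observe that $\bar I_v\subseteq V\setminus X$ by definition, so the target set $W:=V_{s+2}\cap\bar I_v$ already lies in $V_{s+2}\setminus X$ and has size $\geq\ell/3$. The main obstacle is that the vertex $y_1$ has to play two roles at once: it must be nice (so that the subsequent standard-step machinery applies) and it must have an out-neighbor in $W$ (so that $y_2\in V_{s+2}\cap\bar I_v$ can be chosen). Mimicking the proof of Lemma~\ref{lem::stepsExist} and using that $x$ is nice and not bad of type~I with respect to $(V_{s+1},V_{s+2})$, I would extract a subset $Z\subseteq N_D^+(x,V_{s+1}\setminus X)$ of size at least $(1-\varepsilon')|N_D^+(x,V_{s+1}\setminus X)|$ such that $(Z,V_{s+2})$ is $(\varepsilon')$-regular with density $d'\geq(1-\varepsilon)d$. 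Two applications of Lemma~\ref{smallDegreeVertices} to this pair, first with $V_{s+2}\setminus X$ (to rule out vertices failing niceness) and then with $W$ (to rule out vertices with no out-neighbor in $W$), each remove at most $\varepsilon'|Z|$ vertices; a union bound produces the required $y_1$. Choosing $y_2\in N_D^+(y_1,W)$ and then unwinding the definitions of $\bar I_v$ and $I_v$ yields $y_3\in V\setminus X$ with arcs $y_2\to y_3\to v$.

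Phase~B is easier: I pick $y_5$ first and recover $y_4$ afterwards. Set $W':=V_j\cap\bar O_v$, which again lies in $V_j\setminus X$ and has size $\geq\ell/3$. A single application of Lemma~\ref{smallDegreeVertices} to the $(\varepsilon)$-regular pair $(V_j,V_{j+1})$ with the set $V_{j+1}\setminus X$ shows that all but at most $\varepsilon\ell$ vertices of $V_j$ meet the lower-degree requirement of niceness; combined with the uniform upper bound $\deg_D^+(w,V_i)\leq q_2$ that is assumed, this gives a nice $y_5\in W'$. By definition of $\bar O_v$ there is some $y_4\in O_v\subseteq V\setminus X$ with $(y_4,y_5)\in E(D)$, and $(v,y_4)\in E(D)$ by definition of $O_v$.

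Concatenating the two phases produces the desired six-arc walk $x\to y_1\to y_2\to y_3\to v\to y_4\to y_5$ with all $y_i\in V\setminus X$ and $y_5$ nice. The only moment that requires real care is Phase~A, where $y_1$ must simultaneously satisfy two properties on the same $(\varepsilon')$-regular pair. The remaining arithmetic (checking $|V_{s+2}\setminus X|,|W|\geq\varepsilon'|V_{s+2}|$ and that the lower degree thresholds exceed $q_1$) is immediate from the hypotheses $q_1\leq(1-\varepsilon')(1-\varepsilon)d\varepsilon'\ell$, $|V_i\cap X|\leq\ell/4$, and $\varepsilon'\leq 0.01$.
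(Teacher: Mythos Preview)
Your proposal is correct and follows essentially the same two-phase approach as the paper: first extend from $x$ through an $(\varepsilon')$-regular subset $Z\subseteq N_D^+(x,V_{s+1}\setminus X)$ into $V_{s+2}\cap\bar I_v$ and unwind to reach $v$, then separately choose a nice $y_5\in V_j\cap\bar O_v$ and recover $y_4\in O_v$.

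One small remark: you write that $y_1$ ``must be nice (so that the subsequent standard-step machinery applies).'' This is not actually required. By Definition~\ref{def::bigStep} only $y_5$ needs to be nice; the vertices $y_1,y_2,y_3,y_4$ merely have to lie in $V(D)\setminus X$. Accordingly the paper applies Lemma~\ref{smallDegreeVertices} to the pair $(Z,V_{s+2})$ only once, with target $(V_{s+2}\cap\bar I_v)\setminus X$, rather than twice. Your extra application (to $V_{s+2}\setminus X$) is harmless---the union bound still leaves $Z$ nonempty since $\varepsilon'\leq 0.01$---but it is not needed, and no ``standard-step machinery'' is invoked after $y_1$ in the big step.
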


\begin{proof}
Let $Y = N_D^+(x, V_{s+1} \setminus X)$. Since $x$ is nice, it follows that $q_2 \geq |Y| \geq (1 - \varepsilon')(1 - \varepsilon) d |V_{s+1} \setminus X| \geq q_1$, where the last inequality holds since $q_1 \leq (1 - \varepsilon')(1 - \varepsilon) d \varepsilon' \ell$ and $|V_{s+1} \setminus X| \geq 3\ell/4$. Since, moreover, $x$ is not bad of type I with respect to $V_{s+1}, V_{s+2}$ and the pair $(V_{s+1}, V_{s+2})$ is $(\varepsilon)$-regular with directed density $d$, it follows that there exists a set $Z \subseteq Y$ such that $|Z| \geq (1 - \varepsilon')|Y|$ and the pair $(Z, V_{s+2})$ is $(\varepsilon')$-regular with directed density $d'$ for some $d' \geq (1 - \varepsilon) d$. Since $|X \cap V_{s+2}| \leq \ell/4$ and $|V_{s+2} \cap \bar{I}_v| \geq \ell/3$, it follows that $|(V_{s+2} \cap \bar{I}_v) \setminus X| \geq \varepsilon' \ell$. It thus follows by Lemma~\ref{smallDegreeVertices} that there exists a vertex $y_1 \in Z$ such that $\deg_D^+(y_1, (V_{s+2} \cap \bar{I}_v) \setminus X) \geq (1 - \varepsilon')(1 - \varepsilon) d |(V_{s+2} \cap \bar{I}_v) \setminus X| \geq q_1$. In particular, there exists a vertex $y_2 \in N_D^+(y_1, (V_{s+2} \cap \bar{I}_v) \setminus X)$. By definition of $\bar{I}_v$ and $I_v$ there exists a vertex $y_3 \in N_D^+(y_2, I_v \setminus X)$. Since $y_3 \in I_v$, we have $(y_3, v) \in E(D)$. Since $|X \cap V_j| \leq \ell/4$ and $|V_j \cap \bar{O}_v| \geq \ell/3$, it follows that $|(V_j \cap \bar{O}_v) \setminus X| \geq \varepsilon' \ell$. Since, moreover, $(V_j, V_{j+1})$ is $(\varepsilon)$-regular with directed density $d$, it follows by Lemma~\ref{smallDegreeVertices} that there exists a vertex $y_5 \in (V_j \cap \bar{O}_v) \setminus X$ such that $\deg_D^+(y_5, V_{j+1} \setminus X) \geq (1 - \varepsilon')(1 - \varepsilon) d |V_{j+1} \setminus X| \geq q_1$, where the last inequality holds since $q_1 \leq (1 - \varepsilon')(1 - \varepsilon) d \varepsilon' \ell$ and $|V_{j+1} \setminus X| \geq 3\ell/4$. Since $y_5 \notin X$, it follows that $\deg_D^+(y_5, V_{j+1} \setminus X) \leq q_2$ and thus $y_5$ is nice. Finally, by the definition of $O_v$ and $\bar{O}_v$, there exists $y_4 \in N_D^-(y_5, O_v \setminus X)$.     
\end{proof}

\vspace*{-1.3cm}

\begin{figure}[H] \label{fig1}
\begin{center}
\includegraphics[height=7.7in, width=7.7in]{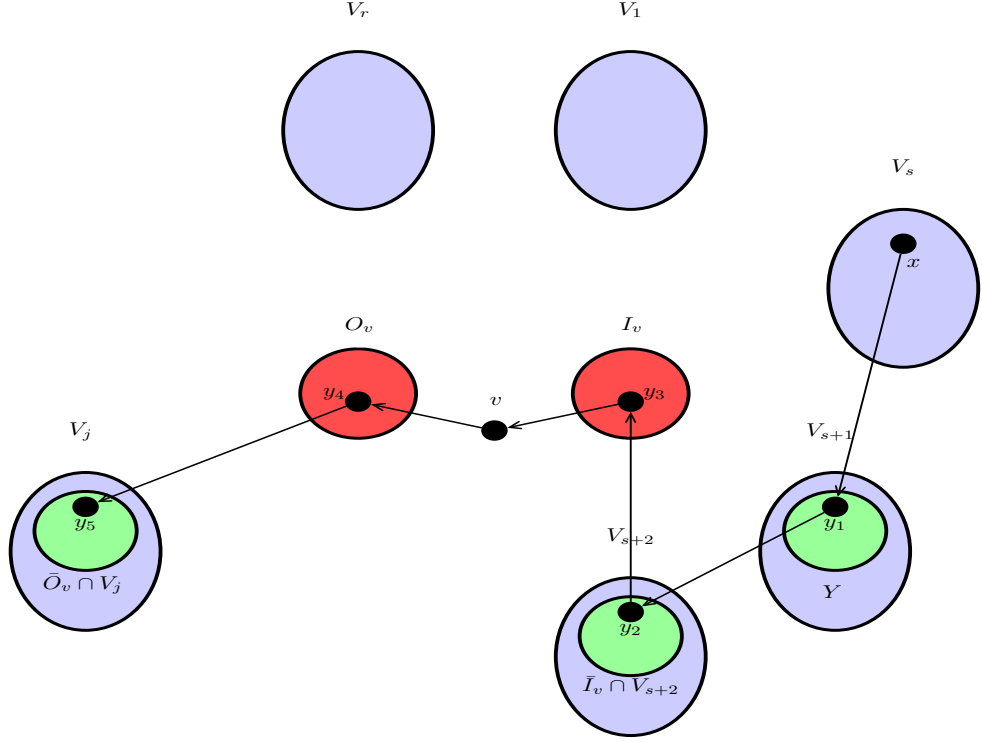}
\vspace*{-5.5cm}
\caption{An example of a big step from $x$ via $v$; note that $v$, $I_v$ and $O_v$ appear in the center of the cycle to stress that we make no assumptions regarding their location.}
\end{center}
\end{figure}

The second composite step we consider consists of four arcs and is used to close a path into a cycle.

\begin{definition} \label{def::closingStep}
Let $D$, $\ell$, $V_1, \ldots, V_r$, $\varepsilon$, $\varepsilon'$, $d$, $q_1$, $q_2$, $X$, $s$ and $x$ be as in Definition~\ref{def::niceVertex} and let $z \in V_{s+4}$ be a vertex. A closing step from $x$ to $z$ with respect to $X$ consists of four arcs $(x, y_1), (y_1, y_2), (y_2, y_3)$ and $(y_3, z)$ of $E(D)$ such that $y_i \in V_{s+i} \setminus X$ for every $1 \leq i \leq 3$.  
\end{definition}

\begin{lemma} \label{lem::closingStep}
Let $D$, $\ell$, $V_1, \ldots, V_r$, $\varepsilon$, $\varepsilon'$, $d$, $q_1$, $q_2$, $X$, $s$, $x$ and $z$ be as in Definition~\ref{def::closingStep}. Assume that $x$ is a nice vertex and is not bad of type I with respect to $V_{s+1}, V_{s+2}$. Assume further that $z$ is not bad of type I with respect to $V_{s+2}, V_{s+3}$ and $\deg_D^-(z, V_{s+3} \setminus X) \geq q_1$. Finally, assume that $q_1 \leq (1 - \varepsilon')(1 - \varepsilon) d \varepsilon' \ell$ and that $|V_i \cap X| < (1 - 2\varepsilon') \ell$ holds for every $1 \leq i \leq r$. Then there exists a closing step from $x$ to $z$ with respect to $X$.     
\end{lemma}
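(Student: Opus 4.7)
The plan is to build the path $x \to y_1 \to y_2 \to y_3 \to z$ from both endpoints inward. I will first extract two large regular-like candidate sets: one of potential $y_1$'s sitting in $V_{s+1}\setminus X$, whose pair with $V_{s+2}$ is regular with density close to $d$; and symmetrically one of potential $y_3$'s sitting in $V_{s+3}\setminus X$, whose pair with $V_{s+2}$ is regular with density close to $d$. The middle vertex $y_2 \in V_{s+2}\setminus X$ will then be located using Lemma~\ref{smallDegreeVertices} applied to both of these regular pairs.

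For the candidate set in $V_{s+1}$, the niceness of $x$ ensures that $Q := N_D^+(x, V_{s+1}\setminus X)$ has $q_1 \le |Q| \le q_2$; since $Q \subseteq N_D^+(x, V_{s+1})$ and $x$ is not bad of type I with respect to $(V_{s+1},V_{s+2})$, the failure of condition (I.1) applied to this specific $Q$ supplies $\tilde Q\subseteq Q$ with $|\tilde Q|\ge (1-\varepsilon')|Q|$ such that $(\tilde Q, V_{s+2})$ is $(\varepsilon')$-regular with directed density $d' \ge (1-\varepsilon)d$. On the $z$ side, the hypothesis $\deg_D^-(z, V_{s+3}\setminus X)\ge q_1$ lets me pick $R\subseteq N_D^-(z, V_{s+3}\setminus X)$ with $q_1\le |R|\le q_2$ (trimming to size $q_2$ if needed), and the failure of condition (I.2) for $z$ with respect to $(V_{s+2},V_{s+3})$ yields $\tilde R \subseteq R$ with $|\tilde R|\ge (1-\varepsilon')|R|$ such that $(V_{s+2}, \tilde R)$ is $(\varepsilon')$-regular with directed density $d''\ge (1-\varepsilon)d$. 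Note that $\tilde Q\subseteq V_{s+1}\setminus X$ and $\tilde R\subseteq V_{s+3}\setminus X$ by construction, and $(x,y)\in E(D)$ for every $y\in \tilde Q$ while $(y,z)\in E(D)$ for every $y\in \tilde R$.

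To finish, I will apply Lemma~\ref{smallDegreeVertices} to the pair $(\tilde Q, V_{s+2})$ and to the pair $(V_{s+2}, \tilde R)$. This shows that all but at most $\varepsilon'\ell$ vertices of $V_{s+2}$ have $\deg_D^-(\cdot,\tilde Q) \ge (1-\varepsilon')d'|\tilde Q| > 0$, and similarly all but at most $\varepsilon'\ell$ vertices have $\deg_D^+(\cdot,\tilde R) \ge (1-\varepsilon')d''|\tilde R| > 0$, leaving a ``good'' subset of $V_{s+2}$ of size at least $\ell - 2\varepsilon'\ell$. Since the assumption $|V_{s+2}\cap X| < (1-2\varepsilon')\ell$ gives $|V_{s+2}\setminus X| > 2\varepsilon'\ell$, a pigeonhole argument locates a vertex $y_2\in V_{s+2}\setminus X$ with both properties; picking any $y_1\in \tilde Q\cap N_D^-(y_2)$ and $y_3\in \tilde R\cap N_D^+(y_2)$ completes the closing step. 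The only non-routine point, which I expect to be the main technical obstacle, is the double bookkeeping: verifying that the sizes of $Q$ and $R$ really fall into $[q_1,q_2]$ so that the definition of ``bad of type I'' can be invoked on them, and that the slack $|V_{s+2}\setminus X| > 2\varepsilon'\ell$ exceeds the combined size of the two exceptional sets produced by Lemma~\ref{smallDegreeVertices}.
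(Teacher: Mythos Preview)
Your proposal is correct and follows essentially the same approach as the paper's proof. Both arguments extract regular pairs $(\tilde Q, V_{s+2})$ and $(V_{s+2}, \tilde R)$ from the non-badness assumptions on $x$ and $z$, then use regularity (via Lemma~\ref{smallDegreeVertices}) together with the bound $|V_{s+2}\setminus X| > 2\varepsilon'\ell$ to locate a common vertex $y_2 \in V_{s+2}\setminus X$ with an in-neighbour in $\tilde Q$ and an out-neighbour in $\tilde R$; the paper phrases this last step as a majority argument ($|N_D^+(Z_1, V_{s+2}\setminus X)|,\ |N_D^-(Z_3, V_{s+2}\setminus X)| > |V_{s+2}\setminus X|/2$), but this is equivalent to your bookkeeping with the two exceptional sets of size at most $\varepsilon'\ell$.
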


\begin{proof}
Let $Y_1 = N_D^+(x, V_{s+1} \setminus X)$. Since $x$ is nice, it follows that $q_2 \geq |Y_1| \geq (1 - \varepsilon')(1 - \varepsilon) d |V_{s+1} \setminus X| \geq q_1$, where the last inequality holds since $q_1 \leq (1 - \varepsilon')(1 - \varepsilon) d \varepsilon' \ell$ and $|V_{s+1} \setminus X| \geq 2 \varepsilon' \ell$. Since, moreover, $x$ is not bad of type I with respect to $V_{s+1}, V_{s+2}$ and the pair $(V_{s+1}, V_{s+2})$ is $(\varepsilon)$-regular with directed density $d$, it follows that there exists a set $Z_1 \subseteq Y_1$ such that $|Z_1| \geq (1 - \varepsilon')|Y_1|$ and the pair $(Z_1, V_{s+2})$ is $(\varepsilon')$-regular with directed density $d'$ for some $d' \geq (1 - \varepsilon) d$. Similarly, since $\deg_D^-(z, V_{s+3} \setminus X) \geq q_1$, since $z$ is not bad of type I with respect to $V_{s+2}, V_{s+3}$ and since the pair $(V_{s+2}, V_{s+3})$ is $(\varepsilon)$-regular with directed density $d$, it follows that there exists a set $Z_3 \subseteq N_D^-(z, V_{s+3} \setminus X)$ such that $|Z_3| \geq (1 - \varepsilon') q_1$ and the pair $(V_{s+2}, Z_3)$ is $(\varepsilon')$-regular with directed density $d'$ for some $d' \geq (1 - \varepsilon) d$. Since $|V_{s+2} \setminus X| > 2\varepsilon' \ell$ and the pair $(Z_1, V_{s+2})$ is $(\varepsilon')$-regular with positive density, it follows that $|N_D^+(Z_1, V_{s+2} \setminus X)| \geq |V_{s+2} \setminus X| - \varepsilon' \ell > |V_{s+2} \setminus X|/2$. Similarly, $|N_D^-(Z_3, V_{s+2} \setminus X)| > |V_{s+2} \setminus X|/2$ as well and thus $N_D^+(Z_1, V_{s+2} \setminus X) \cap N_D^-(Z_3, V_{s+2} \setminus X) \neq \emptyset$. Choosing any vertices $y_2 \in N_D^+(Z_1, V_{s+2} \setminus X) \cap N_D^-(Z_3, V_{s+2} \setminus X)$, $y_1 \in N_D^-(y_2, Z_1)$ and $y_3 \in N_D^+(y_2, Z_3)$ completes the proof.    
\end{proof}

\bigskip

We end this section by proving some properties of random forward steps. More precisely, we show that regularity and the fact that we are working with a subdigraph of a random digraph, imply that the probabilities that vertices of a sequences of two (or more) consecutive random forward steps, starting at a nice vertex, belong to predefined sets is essentially uniformly distributed (up to constant factors).

\begin{lemma} \label{lem::twoSteps}
Let $D$, $\ell$, $V_1, \ldots, V_r$, $\varepsilon$, $\varepsilon'$, $d$, $q_1$, $q_2$, $X$ and $s$ be as in Definition~\ref{def::niceVertex} and let $x \in V_s \setminus X$ be a nice vertex which is not bad of type I with respect to $V_{s+1}, V_{s+2}$. Assume further that $\varepsilon \leq \varepsilon' \leq 10^{-3}$, that $q_1 \leq (1 - \varepsilon')(1 - \varepsilon) d \varepsilon' \ell$, that $|V_i \cap X| \leq (1 - \varepsilon') \ell$ holds for every $1 \leq i \leq r$ and that $\deg_D^+(v, V_i) \leq q_2$ holds for every $v \in V(D) \setminus X$ and every $1 \leq i \leq r$. Let $Z \subseteq V_{s+2} \setminus X$ be an arbitrary set of size $|Z| \geq 2 \varepsilon' \ell$. Let $(x,y)$ and $(y,z)$ be two consecutive random forward steps. Then $Pr(z \in Z) \geq \frac{0.99 |Z| - \varepsilon' \ell}{|V_{s+2} \setminus X|}$.
\end{lemma}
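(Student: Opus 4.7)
The plan is to lower bound $\Pr(z \in Z)$ by conditioning on the first step $y$ and restricting attention to a ``well-behaved'' subset $Y^* \subseteq N_D^+(x, V_{s+1} \setminus X)$ of typical-degree nice vertices. Set $Y := N_D^+(x, V_{s+1} \setminus X)$. Since $x$ is nice, $q_1 \leq |Y| \leq q_2$, and since $x$ is not bad of type~I with respect to $(V_{s+1}, V_{s+2})$, there is $\tilde Y \subseteq Y$ with $|\tilde Y| \geq (1-\varepsilon')|Y|$ such that $(\tilde Y, V_{s+2})$ is $(\varepsilon')$-regular with directed density $d'$ satisfying $(1-\varepsilon)d \leq d' \leq (1+\varepsilon)d$. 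The hypothesis $|V_i \cap X| \leq (1-\varepsilon')\ell$ gives $|V_{s+2} \setminus X| \geq \varepsilon'\ell$, so Lemma~\ref{smallDegreeVertices} applied to $(\tilde Y, V_{s+2})$ shows that all but at most $2\varepsilon'|\tilde Y|$ vertices $y \in \tilde Y$ satisfy
\[
(1-\varepsilon')d'|V_{s+2}\setminus X| \;\leq\; \deg_D^+(y, V_{s+2}\setminus X) \;\leq\; (1+\varepsilon')d'|V_{s+2}\setminus X|;
\]
let $Y^*$ denote this set. Using $q_1 \leq (1-\varepsilon')(1-\varepsilon)d\varepsilon'\ell$ and $\deg_D^+(y, V_{s+2}) \leq q_2$ for $y \notin X$, every vertex of $Y^*$ is nice.

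Next I would analyse the second step. Let $Z' := \{z \in Z : z \text{ is nice}\}$. Applying Lemma~\ref{smallDegreeVertices} to the $(\varepsilon)$-regular pair $(V_{s+2}, V_{s+3})$ (whose $|V_{s+3}\setminus X| \geq \varepsilon\ell$) shows that at most $\varepsilon\ell \leq \varepsilon'\ell$ vertices of $V_{s+2}$ fail the lower half of the niceness condition, giving $|Z'| \geq |Z| - \varepsilon'\ell \geq \varepsilon'\ell$. Let $Y_{\mathrm{nice}}$ denote the set of nice vertices in $Y$, on which the random forward step $y$ is uniform; note that $Y^* \subseteq Y_{\mathrm{nice}}$. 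For any $y^* \in Y^*$, unpacking the definition of a random forward step and using that $Z' \subseteq V_{s+2} \setminus X$ consists of nice vertices,
\[
\Pr(z \in Z \mid y = y^*) \;\geq\; \frac{|N_D^+(y^*, Z')|}{(1+\varepsilon')d'|V_{s+2}\setminus X|},
\]
the denominator bound coming from the defining inequality for membership in $Y^*$. Summing over $y^* \in Y^*$ yields
\[
\Pr(z \in Z) \;\geq\; \frac{e_D(Y^*, Z')}{|Y_{\mathrm{nice}}|\cdot(1+\varepsilon')d'|V_{s+2}\setminus X|}.
\]

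Finally, regularity of $(\tilde Y, V_{s+2})$ applied to $(Y^*, Z')$ gives $e_D(Y^*, Z') \geq (1-\varepsilon')d'|Y^*||Z'|$, since $|Y^*| \geq (1-2\varepsilon')|\tilde Y| \geq \varepsilon'|\tilde Y|$ and $|Z'| \geq \varepsilon'|V_{s+2}|$. Combined with $|Y_{\mathrm{nice}}| \leq |Y|$ and $|Y^*|/|Y| \geq (1-2\varepsilon')(1-\varepsilon') \geq 1-3\varepsilon'$, the factors of $d'$ cancel and, using $\varepsilon' \leq 10^{-3}$ together with $|Z'| \geq |Z|-\varepsilon'\ell$, one gets
\[
\Pr(z \in Z) \;\geq\; \frac{(1-\varepsilon')(1-3\varepsilon')}{1+\varepsilon'}\cdot\frac{|Z'|}{|V_{s+2}\setminus X|} \;\geq\; \frac{0.99\,|Z| - \varepsilon'\ell}{|V_{s+2}\setminus X|},
\]
as required. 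The main subtlety is that the denominator in the first random step is $|Y_{\mathrm{nice}}|$ rather than $|Y^*|$, which we can control only via the crude bound $|Y_{\mathrm{nice}}| \leq |Y|$; the argument still works because $|Y^*|$ is a $(1-O(\varepsilon'))$ fraction of $|Y|$, and the slack between $0.995$ (what is actually proven) and $0.99$ (what is claimed) comfortably absorbs the $\varepsilon'$-losses.
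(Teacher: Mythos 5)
Your proposal is correct and follows essentially the same approach as the paper: you restrict to a well-behaved subset $Y^*$ of $Y = N_D^+(x, V_{s+1}\setminus X)$ (using the ``not bad of type I'' hypothesis plus Lemma~\ref{smallDegreeVertices}) and bound the conditional probability that $z$ lands in the nice part $Z'$ of $Z$, just as the paper restricts to $Y'' = Y_1\cap Y_2\cap Y_3$ and bounds $Pr(z\in N\mid y\in Y'')$. The only cosmetic variation is that you lower-bound $e_D(Y^*, Z')$ directly via the density form of $(\varepsilon')$-regularity and then divide, whereas the paper applies Lemma~\ref{smallDegreeVertices} once more (via $Y_3$) to control $\deg_{D'}^+(y,N)$ per vertex; both routes give the same estimate.
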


\begin{proof}
Let $N = \{w \in Z : w \textrm{ is nice}\}$. Since the pair $(V_{s+2}, V_{s+3})$ is $(\varepsilon)$-regular with directed density $d$, $q_1 \leq (1 - \varepsilon')(1 - \varepsilon) d \varepsilon' \ell$ and $|V_{s+3} \setminus X| \geq \varepsilon' \ell \geq \varepsilon \ell$, it follows by Lemma~\ref{smallDegreeVertices} that $|N| \geq |Z| - \varepsilon \ell \geq |Z| - \varepsilon' \ell \geq \varepsilon' \ell$.

Let $Y = N_D^+(x, V_{s+1} \setminus X)$. Since $x$ is nice, it follows that $q_2 \geq |Y| \geq (1 - \varepsilon')(1 - \varepsilon) d |V_{s+1} \setminus X| \geq q_1$, where the last inequality holds since $q_1 \leq (1 - \varepsilon')(1 - \varepsilon) d \varepsilon' \ell$ and $|V_{s+1} \setminus X| \geq \varepsilon' \ell$. Since, moreover, $x$ is not bad of type I with respect to $V_{s+1}, V_{s+2}$ and the pair $(V_{s+1}, V_{s+2})$ is $(\varepsilon)$-regular with directed density $d$, it follows that there exists a set $Y' \subseteq Y$ such that $|Y'| \geq (1 - \varepsilon')|Y|$ and the pair $(Y', V_{s+2})$ is $(\varepsilon')$-regular with directed density $d'$ for some $(1 - \varepsilon) d \leq d' \leq (1 + \varepsilon) d$. Let $Y_1 = \{w \in Y' : w \textrm{ is nice}\}$, let $Y_2 = \{w \in Y' : \deg_{D'}^+(w, V_{s+2} \setminus X) \leq (1 + \varepsilon') (1 + \varepsilon) d |V_{s+2} \setminus X|\}$, let $Y_3 = \{w \in Y' : \deg_{D'}^+(y, N) \geq (1 - \varepsilon') (1 - \varepsilon) d |N|\}$ and let $Y'' = Y_1 \cap Y_2 \cap Y_3$. Since the pair $(Y', V_{s+2})$ is $(\varepsilon')$-regular with directed density $d' \geq (1 - \varepsilon) d$, $q_1 \leq (1 - \varepsilon')(1 - \varepsilon) d \varepsilon' \ell$ and $|V_{s+2} \setminus X| \geq \varepsilon' \ell$, it follows by Lemma~\ref{smallDegreeVertices} that $|Y_1| \geq |Y'| - \varepsilon' |Y'|$. Similarly, since the pair $(Y', V_{s+2})$ is $(\varepsilon')$-regular with directed density $d' \leq (1 + \varepsilon) d$ and $|V_{s+2} \setminus X| \geq \varepsilon' \ell$, it follows by Lemma~\ref{smallDegreeVertices} that $|Y_2| \geq |Y'| - \varepsilon' |Y'|$. Finally, since the pair $(Y', V_{s+2})$ is $(\varepsilon')$-regular with directed density $d' \geq (1 - \varepsilon) d$ and $|N| \geq \varepsilon' \ell$, it follows by Lemma~\ref{smallDegreeVertices} that $|Y_3| \geq |Y'| - \varepsilon' |Y'|$. Therefore, $|Y''| \geq |Y'| - 3 \varepsilon' |Y'| \geq (1 - 3 \varepsilon')(1 - \varepsilon')|Y|$.

We conclude that
\begin{eqnarray*}
Pr(z \in Z) &\geq& Pr(z \in N) \\ 
&\geq& Pr(z \in N \textrm{ and } y \in Y'')\\
&=& Pr(y \in Y'') \cdot Pr(z \in N \mid y \in Y'')\\
&\geq& \left(1 - 3 \varepsilon' \right) \left(1 - \varepsilon' \right) \cdot \frac{(1 - \varepsilon') (1 - \varepsilon) d |N|}{(1 + \varepsilon') (1 + \varepsilon) d |V_{s+2} \setminus X|}\\
&\geq& \left(1 - 3 \varepsilon' \right) \left(1 - \varepsilon' \right) \cdot \frac{(1 - \varepsilon') (1 - \varepsilon) (|Z| - \varepsilon' \ell)}{(1 + \varepsilon') (1 + \varepsilon) |V_{s+2} \setminus X|}\\
&\geq& \frac{0.99 |Z| - \varepsilon' \ell}{|V_{s+2} \setminus X|} \,.
\end{eqnarray*}
\end{proof} 

\begin{lemma} \label{lem::threeSteps}
Let $D$, $\ell$, $V_1, \ldots, V_r$, $\varepsilon$, $\varepsilon'$, $d$, $q_1$, $q_2$, $X$ and $s$ be as in Definition~\ref{def::niceVertex} and let $x \in V_s \setminus X$ be a nice vertex which is not bad of type I with respect to $V_{s+1}, V_{s+2}$. Assume further that $\varepsilon \leq \varepsilon' \leq 10^{-3}$, that $q_1 \leq (1 - \varepsilon')(1 - \varepsilon) d \varepsilon' \ell$, that $|V_i \cap X| \leq 2 \ell/3$ holds for every $1 \leq i \leq r$ and that $\deg_D^+(v, V_i) \leq q_2$ holds for every $v \in V(D) \setminus X$ and every $1 \leq i \leq r$. Let $Z \subseteq V_{s+3} \setminus X$ be a set which satisfies all of the following properties: 
\begin{description}
\item [(a)] $q_2 \geq |Z| \geq q_1$.
\item [(b)] The pair $(V_{s+2}, Z)$ is $(\varepsilon')$-regular with directed density $d(V_{s+2}, Z) \geq (1 - \varepsilon) d$.
\item [(c)] The pair $(Z, V_{s+4})$ is $(\varepsilon')$-regular with directed density $d(Z, V_{s+4}) \geq (1 - \varepsilon) d$. 
\end{description}
Let $(x,y), (y,y')$ and $(y',z)$ be three consecutive random forward steps. Then $Pr(z \in Z) \geq 0.95 |Z|/ \ell$.
\end{lemma}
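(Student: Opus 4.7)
The plan is to reduce to Lemma~\ref{lem::twoSteps} by identifying a large ``good'' subset $W \subseteq V_{s+2}\setminus X$ with the property that once the first two random forward steps land in $W$, the third step is essentially a uniform sample from $V_{s+3}\setminus X$ restricted to $Z$. First I would control the set $Z^* := \{z\in Z : z\text{ is nice}\}$: by assumption (c), the pair $(Z, V_{s+4})$ is $(\varepsilon')$-regular with directed density at least $(1-\varepsilon)d$, and $|V_{s+4}\setminus X|\geq \ell/3 \geq \varepsilon'|V_{s+4}|$, so Lemma~\ref{smallDegreeVertices} implies that all but at most $\varepsilon'|Z|$ vertices of $Z$ satisfy $\deg_D^+(z,V_{s+4}\setminus X)\geq (1-\varepsilon')(1-\varepsilon)d|V_{s+4}\setminus X|\geq q_1$; combined with the blanket upper bound $\deg_D^+(z,V_{s+4})\leq q_2$, this gives $|Z^*|\geq (1-\varepsilon')|Z|$.

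Next I would define $W$ to be the set of $y'\in V_{s+2}\setminus X$ satisfying all three of: $(i)$ $y'$ is nice; $(ii)$ $\deg_D^+(y',V_{s+3}\setminus X)\leq (1+\varepsilon)d|V_{s+3}\setminus X|$; and $(iii)$ $\deg_D^+(y',Z^*)\geq (1-\varepsilon')^2(1-\varepsilon)d|Z|$. Conditions $(i)$ and $(ii)$ each exclude at most $\varepsilon\ell$ vertices of $V_{s+2}$ by Lemma~\ref{smallDegreeVertices} applied to the $(\varepsilon)$-regular pair $(V_{s+2},V_{s+3})$ with $Y=V_{s+3}\setminus X$; for $(iii)$ I would use assumption (b) together with $|Z^*|\geq(1-\varepsilon')|Z|\geq\varepsilon'|Z|$ and Lemma~\ref{smallDegreeVertices} on $(V_{s+2},Z)$, which excludes at most $\varepsilon'\ell$ further vertices. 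Since $|V_{s+2}\setminus X|\geq \ell/3$, this produces $|W|\geq |V_{s+2}\setminus X|-3\varepsilon'\ell\geq 2\varepsilon'\ell$.

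Since the hypotheses of Lemma~\ref{lem::twoSteps} transfer directly from those of the current lemma (in particular $|V_i\cap X|\leq 2\ell/3\leq (1-\varepsilon')\ell$), applying it to the first two random forward steps $(x,y),(y,y')$ with target set $W$ yields
\[
\Pr(y'\in W)\;\geq\;\frac{0.99\,|W|-\varepsilon'\ell}{|V_{s+2}\setminus X|}\;\geq\;0.99-\frac{4\varepsilon'\ell}{|V_{s+2}\setminus X|}\;\geq\;0.97.
\]
Now condition on any $y'\in W$. The third random forward step chooses $z$ uniformly from the nice vertices in $N_D^+(y',V_{s+3}\setminus X)$. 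Since $Z^*\subseteq V_{s+3}\setminus X$ and every $z\in Z^*$ is nice, the number of valid $z$ landing in $Z$ is at least $\deg_D^+(y',Z^*)\geq (1-\varepsilon')^2(1-\varepsilon)d|Z|$, whereas the total number of valid choices is at most $\deg_D^+(y',V_{s+3}\setminus X)\leq (1+\varepsilon)d\,\ell$. Hence
\[
\Pr(z\in Z\mid y')\;\geq\;\frac{(1-\varepsilon')^2(1-\varepsilon)|Z|}{(1+\varepsilon)\ell}\;\geq\;(1-4\varepsilon')\frac{|Z|}{\ell}.
\]
Multiplying gives $\Pr(z\in Z)\geq 0.97\cdot(1-4\varepsilon')|Z|/\ell\geq 0.95|Z|/\ell$.

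The main obstacle is that $d=\xi p$ and $|Z|$ can both be much smaller than $\varepsilon'\ell$, so one cannot hope to invoke regularity of $(V_{s+2},V_{s+3})$ to reach $Z$ directly — the small-set regularity assumptions $(b)$ and $(c)$ on the pairs $(V_{s+2},Z)$ and $(Z,V_{s+4})$ are precisely what allow one to push through a target of size merely $q_1$. A related subtlety is that, because $d$ may be small, one cannot afford to lose $\varepsilon'|Z|$ vertices when restricting to nice vertices of $Z$ via a crude bound such as $|N^+(y',Z\setminus\text{nice})|\leq |Z\setminus\text{nice}|$; instead one must apply Lemma~\ref{smallDegreeVertices} to the $(\varepsilon')$-regular pair $(V_{s+2},Z)$ with the specific subset $Z^*$, which keeps the loss multiplicative in $d$.
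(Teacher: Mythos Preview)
Your proposal is correct and follows essentially the same approach as the paper: define the nice subset of $Z$ (your $Z^*$, the paper's $N$), carve out a large good set in $V_{s+2}\setminus X$ via Lemma~\ref{smallDegreeVertices} applied to both $(V_{s+2},V_{s+3})$ and $(V_{s+2},Z)$, invoke Lemma~\ref{lem::twoSteps} to land there with probability $\geq 0.97$, and then bound the conditional probability of the third step hitting $Z^*$. The only cosmetic difference is that you include the redundant condition ``$y'$ is nice'' in your set $W$ (the paper's $Y = Y_1 \cap Y_2$ omits it, since a random forward step always lands on a nice vertex anyway); this costs you an extra $\varepsilon\ell$ in the size of $W$ but is harmless for the final estimate.
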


\begin{proof}
Let $N = \{w \in Z : w \textrm{ is nice}\}$. Since the pair $(Z, V_{s+4})$ is $(\varepsilon')$-regular with directed density $d(Z, V_{s+4}) \geq (1 - \varepsilon) d$, $q_1 \leq (1 - \varepsilon')(1 - \varepsilon) d \varepsilon' \ell$ and $|V_{s+4} \setminus X| \geq \varepsilon' \ell$, it follows by Lemma~\ref{smallDegreeVertices} that $|N| \geq (1 - \varepsilon') |Z| \geq \varepsilon' |Z|$.

Let $Y_1 = \{w \in V_{s+2} \setminus X : \deg_D^+(w, V_{s+3} \setminus X) \leq (1 + \varepsilon) d |V_{s+3} \setminus X|\}$. Since the pair $(V_{s+2}, V_{s+3})$ is $(\varepsilon)$-regular with directed density $d$ and $|V_{s+3} \setminus X| \geq \varepsilon' \ell \geq \varepsilon \ell$, it follows by Lemma~\ref{smallDegreeVertices} that $|Y_1| \geq |V_{s+2} \setminus X| - \varepsilon \ell \geq |V_{s+2} \setminus X| - \varepsilon' \ell$. Let $Y_2 = \{w \in V_{s+2} \setminus X : \deg_D^+(w, N) \geq (1 - \varepsilon') (1 - \varepsilon) d |N|\}$. Since the pair $(V_{s+2}, Z)$ is $(\varepsilon')$-regular with directed density $d(V_{s+2}, Z) \geq (1 - \varepsilon) d$ and $|N| \geq \varepsilon' |Z|$, it follows by Lemma~\ref{smallDegreeVertices} that $|Y_2| \geq |V_{s+2} \setminus X| - \varepsilon' \ell$. Let $Y = Y_1 \cap Y_2$, then $|Y| \geq |V_{s+2} \setminus X| - 2 \varepsilon' \ell$.

It follows by Lemma~\ref{lem::twoSteps} that $Pr(y' \in Y) \geq \frac{0.99 |Y| - \varepsilon' \ell}{|V_{s+2} \setminus X|} \geq 0.97$, where the last inequality holds since $|V_{s+2} \setminus X| \geq \ell/3$ and $\varepsilon' \leq 10^{-3}$.

We conclude that
\begin{eqnarray*}
Pr(z \in Z) &\geq& Pr(z \in N)\\
&\geq& Pr(z \in N \textrm{ and } y' \in Y)\\
&=& Pr(y' \in Y) \cdot Pr(z \in N \mid y' \in Y)\\
&\geq& 0.97 \cdot \frac{(1 - \varepsilon') (1 - \varepsilon) d |N|}{(1 + \varepsilon) d |V_{s+3} \setminus X|}\\
&\geq& 0.97 \cdot \frac{(1 - \varepsilon')^2 (1 - \varepsilon) |Z|}{(1 + \varepsilon) |V_{s+3} \setminus X|}\\
&\geq& 0.95 |Z|/\ell \,.
\end{eqnarray*}
\end{proof}

\begin{lemma} \label{lem::fourSteps}
Let $D$, $\ell$, $V_1, \ldots, V_r$, $\varepsilon$, $\varepsilon'$, $d$, $q_1$, $q_2$, $X$ and $s$ be as in Definition~\ref{def::niceVertex} and let $x \in V_s \setminus X$ be a nice vertex which is not bad of type I with respect to $V_{s+1}, V_{s+2}$. Assume further that $\varepsilon \leq \varepsilon' \leq 10^{-3}$, that $q_1 \leq (1 - \varepsilon')(1 - \varepsilon) d \varepsilon' \ell$, that $|V_i \cap X| \leq 2 \ell/3$ holds for every $1 \leq i \leq r$ and that $\deg_D^+(v, V_i) \leq q_2$ holds for every $v \in V(D) \setminus X$ and every $1 \leq i \leq r$. Let $Z_1 \subseteq V_{s+3} \setminus X$ and $Z_2 \subseteq V_{s+4} \setminus X$ be sets which satisfy all of the following properties:
\begin{description}
\item [(i)] $q_2 \geq |Z_1|, |Z_2| \geq 2 q_1$. 
\item [(ii)] The pair $(V_{s+2}, Z_1)$ is $(\varepsilon')$-regular with directed density $d(V_{s+2}, Z_1) \geq (1 - \varepsilon) d$.
\item [(iii)] The pair $(Z_1, V_{s+4})$ is $(\varepsilon')$-regular with directed density $(1 - \varepsilon) d \leq d(Z_1, V_{s+4}) \leq (1 + \varepsilon) d$.
\item [(iv)] The pair $(Z_2, V_{s+5})$ is $(\varepsilon')$-regular with directed density $d(Z_2, V_{s+5}) \geq (1 - \varepsilon) d$.
\item [(v)] The pair $(Z_1, Z_2)$ is $(\varepsilon')$-regular with directed density $d(Z_1, Z_2) \geq (1 - \varepsilon')^2 d$.
\end{description}
Let $(x,y), (y,z), (z,z')$ and $(z',z'')$ be four consecutive random forward steps. Then 
$$
Pr(z' \in Z_1 \textrm{ and } z'' \in Z_2) \geq \frac{|Z_1| |Z_2|}{2 \ell^2} \,.
$$
\end{lemma}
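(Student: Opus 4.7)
The plan is to condition on the second vertex $z \in V_{s+2}\setminus X$ reached after the first two random forward steps and to apply Lemma~\ref{lem::twoSteps} to that initial segment. For the remaining two steps we pass to carefully chosen ``good'' subsets $Z_1^* \subseteq Z_1$ and $Z_2'' \subseteq Z_2$, and show that, regardless of which nice $z$ is reached, the third step lands in $Z_1^*$ with probability at least $(1-O(\varepsilon'))|Z_1|/\ell$, and then the fourth step lands in $Z_2''$ with probability at least $(1-O(\varepsilon'))|Z_2|/\ell$. Multiplying these together with $Pr(z \in Z^*) \geq 1-O(\varepsilon')$ for a suitable ``good intermediate'' set $Z^* \subseteq V_{s+2}\setminus X$ then yields the required bound, since $Z_1^* \subseteq Z_1$ and $Z_2'' \subseteq Z_2$.

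The key preparatory step is to set up $Z_2''$ and $Z_1^*$. First, using assumption (iv) --- the $(\varepsilon')$-regularity of $(Z_2, V_{s+5})$ with density at least $(1-\varepsilon)d$ --- and Lemma~\ref{smallDegreeVertices} with $Y = V_{s+5}\setminus X$ (of size at least $\ell/3$), all but at most $\varepsilon'|Z_2|$ vertices of $Z_2$ have large enough out-degree into $V_{s+5}\setminus X$ to satisfy the lower bound in the definition of nice; combined with the global upper-degree assumption on $D$, this produces a nice subset $Z_2'' \subseteq Z_2$ of size at least $(1-\varepsilon')|Z_2|$. Next, we let $Z_1^* \subseteq Z_1$ consist of those $w \in Z_1$ which simultaneously are nice, satisfy $\deg^+(w, V_{s+4}\setminus X) \leq (1+\varepsilon')(1+\varepsilon)d|V_{s+4}\setminus X|$, and satisfy $\deg^+(w, Z_2'') \geq (1-\varepsilon')(1-\varepsilon')^2 d|Z_2''|$. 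Applying Lemma~\ref{smallDegreeVertices} three times --- twice to the $(\varepsilon')$-regular pair $(Z_1, V_{s+4})$ from assumption (iii) and once to $(Z_1,Z_2)$ from assumption (v) with $Y := Z_2''$ (which is legitimate since $|Z_2''| \geq \varepsilon'|Z_2|$) --- each of the three defining conditions fails on at most $\varepsilon'|Z_1|$ vertices, so $|Z_1^*| \geq (1-3\varepsilon')|Z_1|$.

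Now define $Z^* \subseteq V_{s+2}\setminus X$ to be those $z$ with $\deg^+(z, Z_1^*) \geq (1-\varepsilon')(1-\varepsilon)d|Z_1^*|$ and $\deg^+(z, V_{s+3}\setminus X) \leq (1+\varepsilon)d|V_{s+3}\setminus X|$. The first condition fails on at most $\varepsilon'|V_{s+2}|$ vertices by Lemma~\ref{smallDegreeVertices} applied to the $(\varepsilon')$-regular pair $(V_{s+2},Z_1)$ of assumption (ii) (using $|Z_1^*| \geq \varepsilon'|Z_1|$), and the second on at most $\varepsilon|V_{s+2}|$ vertices by the $(\varepsilon)$-regularity of $(V_{s+2},V_{s+3})$; hence $|Z^*| \geq |V_{s+2}\setminus X| - 2\varepsilon'\ell \geq 2\varepsilon'\ell$ (using $|V_{s+2}\setminus X| \geq \ell/3$), so Lemma~\ref{lem::twoSteps} applies to $Z^*$ and gives $Pr(z \in Z^*) \geq 1 - O(\varepsilon')$. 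For any $z_0 \in Z^*$, the third random forward step yields $Pr(z' \in Z_1^* \mid z = z_0) \geq \deg^+(z_0, Z_1^*)/\deg^+(z_0, V_{s+3}\setminus X) \geq (1-O(\varepsilon'))|Z_1|/\ell$ (the denominator upper-bounds the number of nice out-neighbors, and $Z_1^*$ consists entirely of nice vertices); analogously, for any $w \in Z_1^*$ the defining properties of $Z_1^*$ give $Pr(z'' \in Z_2'' \mid z' = w) \geq (1-O(\varepsilon'))|Z_2|/\ell$. Multiplying the three bounds and using $\varepsilon' \leq 10^{-3}$ yields at least $|Z_1||Z_2|/(2\ell^2)$. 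The main obstacle is the bookkeeping: the chain $V_{s+2} \to Z_1 \to Z_2 \to V_{s+5}$ forces combining four different regularity hypotheses and controlling a multiplicative $(1\pm O(\varepsilon'))$ error through several nested applications of Lemma~\ref{smallDegreeVertices}, and it is precisely assumption (v) --- that $(Z_1,Z_2)$ is itself regular --- that permits chaining the estimates for $Z_1^*$ and $Z_2''$ together.
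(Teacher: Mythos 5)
Your proof is correct and follows essentially the same strategy as the paper's: restrict $Z_2$ to its nice subset, restrict $Z_1$ to those vertices with controlled out-degree into $V_{s+4}$ and large out-degree into the restricted $Z_2$, and then chain the conditional probabilities along the path. The one structural difference is that the paper invokes Lemma~\ref{lem::threeSteps} as a black box to get $Pr(z' \in N_1) \geq 0.95|N_1|/\ell$, whereas you unroll that lemma explicitly: you introduce a further good intermediate set $Z^* \subseteq V_{s+2}\setminus X$, apply Lemma~\ref{lem::twoSteps} to show $Pr(z \in Z^*) = 1 - O(\varepsilon')$, and then multiply through two explicit conditional bounds. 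This inlining is in fact slightly more careful than the paper on one small point: you explicitly require $Z_1^*$ to consist only of nice vertices (and hence lose one more $\varepsilon'$ factor, getting $(1-3\varepsilon')|Z_1|$ rather than the paper's $(1-2\varepsilon')|Z_1|$), which is exactly what is needed so that a random forward step can actually land in $Z_1^*$; the paper's set $N_1$ is defined only by the two degree conditions and is implicitly intersected with the nice vertices when Lemma~\ref{lem::threeSteps} is invoked. Your constant bookkeeping works out comfortably under $\varepsilon' \leq 10^{-3}$, so the claimed bound $|Z_1||Z_2|/(2\ell^2)$ follows.
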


\begin{proof}
Let $N_2 = \{w \in Z_2 : w \textrm{ is nice}\}$. Since the pair $(Z_2, V_{s+5})$ is $(\varepsilon')$-regular with directed density $d(Z_2, V_{s+5}) \geq (1 - \varepsilon) d$, $q_1 \leq (1 - \varepsilon')(1 - \varepsilon) d \varepsilon' \ell$ and $|V_{s+5} \setminus X| \geq \varepsilon' \ell$, it follows by Lemma~\ref{smallDegreeVertices} that $|N_2| \geq (1 - \varepsilon') |Z_2| \geq \varepsilon' |Z_2|$.

Let $Y_1 = \{w \in Z_1 : \deg_D^+(w, V_{s+4} \setminus X) \leq (1 + \varepsilon') (1 + \varepsilon) d |V_{s+4} \setminus X|\}$. Since the pair $(Z_1, V_{s+4})$ is $(\varepsilon')$-regular with directed density $d(Z_1, V_{s+4}) \leq (1 + \varepsilon) d$ and $|V_{s+4} \setminus X| \geq \varepsilon' \ell$, it follows by Lemma~\ref{smallDegreeVertices} that $|Y_1| \geq |Z_1| - \varepsilon' |Z_1|$. Let $Y_2 = \{w \in Z_1 : \deg_D^+(w, N_2) \geq (1 - \varepsilon')^3 d |N_2|\}$. Since the pair $(Z_1, Z_2)$ is $(\varepsilon')$-regular with directed density $d(Z_1, Z_2) \geq (1 - \varepsilon')^2 d$ and $|N_2| \geq \varepsilon' |Z_2|$, it follows by Lemma~\ref{smallDegreeVertices} that $|Y_2| \geq |Z_1| - \varepsilon' |Z_1|$. Let $N_1 = Y_1 \cap Y_2$, then $|N_1| \geq |Z_1| - 2 \varepsilon' |Z_1| \geq q_1$.

It follows by Properties (i), (ii) and (iii) that the conditions of Lemma~\ref{lem::threeSteps} are satisfied and thus $Pr(z' \in N_1) \geq 0.95 |N_1|/\ell \geq 0.9 |Z_1|/\ell$.

We conclude that
\begin{eqnarray*}
Pr(z' \in Z_1 \textrm{ and } z'' \in Z_2) &\geq& Pr(z' \in N_1 \textrm{ and } z'' \in N_2)\\
&=& Pr(z' \in N_1) \cdot Pr(z'' \in N_2 \mid z' \in N_1)\\
&\geq& \frac{0.9 |Z_1|}{\ell} \cdot \frac{(1 - \varepsilon')^3 d |N_2|}{(1 + \varepsilon') (1 + \varepsilon) d |V_{s+4} \setminus X|}\\
&\geq& \frac{0.9 |Z_1|}{\ell} \cdot \frac{(1 - \varepsilon')^4 |Z_2|}{(1 + \varepsilon') (1 + \varepsilon) |V_{s+4} \setminus X|}\\
&\geq& \frac{|Z_1| |Z_2|}{2 \ell^2} \,.
\end{eqnarray*}
\end{proof}

\begin{lemma} \label{lem::twoStepsUpperBound}
Let $n$ be a positive integer, let $\log n/\sqrt{n} \ll p = p(n) \leq 1$, let $D \in {\mathcal D}(n,p)$ and let $D' = (V,E)$ be a spanning subdigraph of $D$. Let $\ell$, $V_1, \ldots, V_r$, $\varepsilon$, $\varepsilon'$, $d$, $q_1$, $q_2$, $X$ and $s$ be as in Definition~\ref{def::niceVertex} (with respect to $D'$) and let $x \in V_s \setminus X$ be a nice vertex which is not bad of type I with respect to $V_{s+1}, V_{s+2}$. Assume further that $\varepsilon \leq \varepsilon' \leq 10^{-3}$, that $d = \xi p$ for some $0 < \xi \leq 1$, that $q_1 \leq (1 - \varepsilon')(1 - \varepsilon) d \varepsilon' \ell$, that $|V_i \cap X| \leq 2\ell/3$ holds for every $1 \leq i \leq r$ and that $\deg_D^+(v, V_i) \leq q_2$ holds for every $v \in V(D) \setminus X$ and every $1 \leq i \leq r$. Let $Z_1 \subseteq Z_2 \subseteq V_{s+2} \setminus X$ be arbitrary fixed sets of size $|Z_1| = \ell p^{3/2}$ and $|Z_2| = \ell p$. Let $(x,y)$ and $(y,z)$ be two consecutive random forward steps. Then 
\begin{description}
\item [(a)] $Pr(z \in Z_1) \leq 44 \xi^{-2} p$.
\item [(b)] $Pr(z \in Z_2) \leq 44 \xi^{-2} \sqrt{p}$.
\end{description}
\end{lemma}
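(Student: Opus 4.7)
The plan is to expand
\[
\Pr(z\in Z_j) \;=\; \frac{1}{|Y_{\rm nice}|}\sum_{y\in Y_{\rm nice}}\frac{|Z_{\rm nice}(y)\cap Z_j|}{|Z_{\rm nice}(y)|},
\]
where $Y_{\rm nice}$ denotes the nice vertices of $N^+_{D'}(x,V_{s+1}\setminus X)$ and $Z_{\rm nice}(y)$ the nice out-neighbors of $y$ in $V_{s+2}\setminus X$; I will then bound the numerator via Lemma~\ref{lem::degreeTooHigh} and the denominator via the sparse regularity furnished by the assumption that $x$ is not bad of type~I.

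First I will lower bound the denominator for a typical $y$. Just as in the proof of Lemma~\ref{lem::twoSteps}, because $x$ is not bad of type~I with respect to $(V_{s+1},V_{s+2})$ I obtain $Y'\subseteq N^+_{D'}(x,V_{s+1}\setminus X)$ of size at least $(1-\varepsilon')|Y|$ such that $(Y',V_{s+2})$ is $(\varepsilon')$-regular with density $d'\in[(1-\varepsilon)d,(1+\varepsilon)d]$. Writing $N$ for the set of nice vertices of $V_{s+2}\setminus X$, the $(\varepsilon)$-regularity of $(V_{s+2},V_{s+3})$ combined with Lemma~\ref{smallDegreeVertices} gives $|N|\geq \ell/4$, and a second invocation of Lemma~\ref{smallDegreeVertices} on $(Y',V_{s+2})$ restricted to $N$ shows that all but $\varepsilon'|Y'|$ vertices $y\in Y'$ are \emph{typical}, meaning $|Z_{\rm nice}(y)|=\deg^+_{D'}(y,N)\geq (1-\varepsilon')(1-\varepsilon)d|N|\geq \xi p\ell/5$. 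The same bookkeeping yields $|Y_{\rm nice}|\geq \xi p\ell/4$ and shows that the atypical elements of $Y_{\rm nice}$ form only an $O(\varepsilon')$-fraction.

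For the numerator I will use $|Z_{\rm nice}(y)\cap Z_j|\leq \deg^+_D(y,Z_j)$ combined with Lemma~\ref{lem::degreeTooHigh}. For (a) I apply part~(ii) with $A=Z_1$ to obtain a set $B_1$ of at most $\ell p^{3/2}$ vertices outside of which $\deg^+_D(\cdot,Z_1)\leq 7\ell p^2$; together with the crude bounds $|Y_{\rm nice}\cap B_1|\leq \ell p^{3/2}$ and $|Y_{\rm nice}|\leq q_2 = 2\ell p$, this gives $\sum_{y\in Y_{\rm nice}}\deg^+_D(y,Z_1)=O(\ell^2 p^3)$. For (b) I apply part~(i) with $A=Z_2$ to obtain $B_2$ of size at most $\ell p$ with $\deg^+_D(\cdot,Z_2)\leq 2\ell p^2$ outside $B_2$. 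The crude bound $|Y_{\rm nice}\cap B_2|\leq \ell p$ is too weak to achieve the $\sqrt{p}$ factor, so I reapply part~(ii) to $V_{s+1}\cap B_2$ (whose size lies in the range of part~(ii)) to bound $|Y_{\rm nice}\cap B_2|\leq |N^+_D(x,B_2)|\leq 7\ell p^{3/2}$ for an a.a.s.\ typical $x$, producing $\sum_{y\in Y_{\rm nice}}\deg^+_D(y,Z_2)=O(\ell^2 p^{5/2})$.

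Putting the ingredients together, the contribution of typical $y$'s to $\Pr(z\in Z_j)$ will be bounded by $\tfrac{5}{\xi p\ell\,|Y_{\rm nice}|}\sum_y \deg^+_D(y,Z_j)$, which evaluates to $O(p/\xi^2)$ in case~(a) and $O(\sqrt{p}/\xi^2)$ in case~(b); the additive $O(\varepsilon')$ coming from atypical $y$'s (for which we use the trivial bound $\Pr(z\in Z_j\mid y)\leq 1$) will be absorbed into the final constants by a sufficiently small choice of $\varepsilon,\varepsilon'$. The main obstacle will be precisely these atypical nice $y$'s, whose nice out-neighborhood in $V_{s+2}\setminus X$ may be tiny, an event one cannot rule out once $\varepsilon$ exceeds $\xi p$; the crucial device for confining them to an $O(\varepsilon')$-fraction of $Y_{\rm nice}$ is the refined regular pair $(Y',V_{s+2})$ extracted from the assumption that $x$ is not bad of type~I.
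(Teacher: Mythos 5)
You correctly observe that the paper's uniform lower bound $|Z_{\rm nice}(y)|\geq d\ell/4$ (inequality~\eqref{eq::largeDegree}), obtained by subtracting the $\varepsilon\ell$-size exceptional set of Lemma~\ref{smallDegreeVertices} applied to $(V_{s+2},V_{s+3})$ from $\deg^+_{D'}(y,V_{s+2}\setminus X)\approx d\ell/3$, is questionable once $d=\xi p<\varepsilon$; and your remedy --- routing the lower bound through the linear-size set $N$ of nice vertices of $V_{s+2}\setminus X$ and using the $(\varepsilon')$-regularity of $(Y',V_{s+2})$ to certify $\deg^+_{D'}(y,N)\geq d\ell/5$ for most $y\in Y'$ --- is the natural fix. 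However, your treatment of the remaining ``atypical'' $y$'s has a genuine gap. You invoke the trivial bound $\Pr(z\in Z_j\mid y)\leq 1$ and assert that the resulting additive $O(\varepsilon')$ term ``will be absorbed into the final constants by a sufficiently small choice of $\varepsilon,\varepsilon'$''. This cannot work as stated: $\varepsilon$ and $\varepsilon'$ are fixed positive constants, whereas the target bounds $44\xi^{-2}p$ and $44\xi^{-2}\sqrt p$ tend to zero as $n\to\infty$. A constant error of order $\varepsilon'$ swamps a vanishing target for all large $n$. To close this gap you would need to show either that the conditional probability is $O(p)$ even when $y$ is atypical, or that atypical $y$'s occupy only an $O(p)$-fraction of $Y_{\rm nice}$ --- neither of which you establish. (In the paper's argument this problem never arises because the bound on $|Z_{\rm nice}(y)|$ is claimed uniformly, and the only case split is on membership of $y$ in $W_j$.)

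There is a second gap in your argument for part (b). You bound $|Y_{\rm nice}\cap B_2|\leq 7\ell p^{3/2}$ by applying Lemma~\ref{lem::degreeTooHigh}(ii) to $A=V_{s+1}\cap B_2$ and then assuming that $x$ avoids the $\ell p^{3/2}$-size exceptional set of that lemma. But Lemma~\ref{lem::twoStepsUpperBound} must hold for \emph{every} nice $x\in V_s\setminus X$ which is not bad of type I; there is no further hypothesis on $x$, so you cannot declare $x$ ``a.a.s.\ typical'' with respect to a set $B_2$ which depends on the adversarially chosen $D'$ and $Z_2$. The paper avoids needing any degree control on $x$ by estimating $\Pr(y\in W_2)\leq|W_2|/|Y''|$ directly; this crude bound loses a factor of order $(d\ell)^{-1}$ compared to yours, but that loss is exactly offset by the subsequent factor $\Pr(z\in Z_2\mid y\in W_2)\leq|Z_2|/(d\ell/4)$, and it imposes no extra requirement on $x$.
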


\begin{proof}
Let $Y = N_{D'}^+(x, V_{s+1} \setminus X)$. Since $x$ is nice, it follows that $q_2 \geq |Y| \geq (1 - \varepsilon')(1 - \varepsilon) d |V_{s+1} \setminus X| \geq q_1$, where the last inequality holds since $q_1 \leq (1 - \varepsilon')(1 - \varepsilon) d \varepsilon' \ell$ and $|V_{s+1} \setminus X| \geq \varepsilon' \ell$. Since, moreover, $x$ is not bad of type I with respect to $V_{s+1}, V_{s+2}$ and the pair $(V_{s+1}, V_{s+2})$ is $(\varepsilon)$-regular with directed density $d$, it follows that there exists a set $Y' \subseteq Y$ such that $|Y'| \geq (1 - \varepsilon')|Y|$ and the pair $(Y', V_{s+2})$ is $(\varepsilon')$-regular with directed density $d'$ for some $(1 - \varepsilon) d \leq d' \leq (1 + \varepsilon) d$. Let $Y'' = \{w \in Y' : w \textrm{ is nice}\}$. Since the pair $(Y', V_{s+2})$ is $(\varepsilon')$-regular with directed density $d' \geq (1 - \varepsilon) d$, $q_1 \leq (1 - \varepsilon')(1 - \varepsilon) d \varepsilon' \ell$ and $|V_{s+2} \setminus X| \geq \varepsilon' \ell$, it follows by Lemma~\ref{smallDegreeVertices} that 
\begin{equation} \label{eq::manyNiceVertices}
|Y''| \geq |Y'| - \varepsilon' |Y'| \geq (1 - \varepsilon')^2 |Y| \geq (1 - \varepsilon')^3 (1 - \varepsilon) d |V_{s+1} \setminus X| \geq (1 - \varepsilon')^3 (1 - \varepsilon) d \ell/3 \geq d \ell/4 \,.
\end{equation}

By the definition of a random forward step, $y$ is a nice vertex and thus 
\begin{equation} \label{eq::yIsNice}
\deg_{D'}^+(y, V_{s+2} \setminus X) \geq (1 - \varepsilon')(1 - \varepsilon) d |V_{s+2} \setminus X| \geq (1 - \varepsilon')(1 - \varepsilon) d \ell/3 \,.
\end{equation}

Let $Z = \{z \in N_{D'}^+(y, V_{s+2} \setminus X) : z \textrm{ is nice}\}$. Since the pair $(V_{s+2}, V_{s+3})$ is $(\varepsilon)$-regular with directed density $d$, $q_1 \leq (1 - \varepsilon')(1 - \varepsilon) d \varepsilon' \ell \leq (1 - \varepsilon')(1 - \varepsilon)^2 d \ell/3$ and $|V_{s+3} \setminus X| \geq \varepsilon \ell$, it follows by Lemma~\ref{smallDegreeVertices} and by~\eqref{eq::yIsNice} that
\begin{equation} \label{eq::largeDegree}
|Z| \geq (1 - \varepsilon')(1 - \varepsilon) d \ell/3 - \varepsilon \ell \geq d \ell/4 \,.
\end{equation}

Let $W_1 = \{w \in Y : \deg_{D'}^+(w, Z_1) \geq 7 \sqrt{p} |Z_1|\}$. It follows by Lemma~\ref{lem::degreeTooHigh} (ii) that $|W_1| \leq \ell p^{3/2}$. 
Hence
\begin{eqnarray*}
Pr(z \in Z_1) &=& Pr(y \notin W_1) Pr(z \in Z_1 \mid y \notin W_1) + Pr(y \in W_1) Pr(z \in Z_1 \mid y \in W_1) \\
&\leq& 1 \cdot \frac{7 \sqrt{p} |Z_1|}{d \ell/4} + \frac{|W_1|}{|Y''|} \cdot \frac{|Z_1|}{d \ell/4} 
\leq \frac{7 \ell p^2}{d \ell/4} + \frac{\ell p^{3/2}}{d \ell/4} \cdot \frac{\ell p^{3/2}}{d \ell/4} 
= \frac{28 p^2}{d} + \frac{16 p^3}{d^2} \\
&\leq& 44 \xi^{-2} p \,,
\end{eqnarray*} 
where the first inequality holds by~\eqref{eq::largeDegree} and the definition of $W_1$ and the second inequality holds by~\eqref{eq::manyNiceVertices}. This proves (a).

For (b), let $W_2 = \{w \in Y : \deg_{D'}^+(w, Z_2) \geq 7 \sqrt{p} |Z_2|\}$. It follows by Lemma~\ref{lem::degreeTooHigh} (ii) that $|W_2| \leq \ell p^{3/2}$. 
Hence
\begin{eqnarray*}
Pr(z \in Z_2) &=& Pr(y \notin W_2) Pr(z \in Z_2 \mid y \notin W_2) + Pr(y \in W_2) Pr(z \in Z_2 \mid y \in W_2) \\
&\leq& 1 \cdot \frac{7 \sqrt{p} |Z_2|}{d \ell/4} + \frac{|W_2|}{|Y''|} \cdot \frac{|Z_2|}{d \ell/4} 
\leq \frac{7 \ell p^{3/2}}{d \ell/4} + \frac{\ell p^{3/2}}{d \ell/4} \cdot \frac{\ell p}{d \ell/4} 
= \frac{28 p^{3/2}}{d} + \frac{16 p^{5/2}}{d^2} \\
&\leq& 44 \xi^{-2} \sqrt{p} \,,
\end{eqnarray*} 
where the first inequality holds by~\eqref{eq::largeDegree} and the definition of $W_2$ and the second inequality holds by~\eqref{eq::manyNiceVertices}.
\end{proof}

\begin{lemma} \label{lem::threeStepsUpperBound}
Let $n$ be a positive integer, let $\log n/\sqrt{n} \ll p = p(n) \leq 1$, let $D \in {\mathcal D}(n,p)$ and let $D' = (V,E)$ be a spanning subdigraph of $D$. Let $\ell$, $V_1, \ldots, V_r$, $\varepsilon$, $\varepsilon'$, $d$, $q_1$, $q_2$, $X$ and $s$ be as in Definition~\ref{def::niceVertex} (with respect to $D'$) and let $x \in V_s \setminus X$ be a nice vertex which is not bad of type I with respect to $V_{s+1}, V_{s+2}$. Assume further that $\varepsilon \leq \varepsilon' \leq 10^{-3}$, that $d = \xi p$ for some $0 < \xi \leq 1$, that $q_1 \leq (1 - \varepsilon')(1 - \varepsilon) d \varepsilon' \ell$, that $|V_i \cap X| \leq 2\ell/3$ holds for every $1 \leq i \leq r$ and that $\deg_D^+(v, V_i) \leq q_2$ holds for every $v \in V(D) \setminus X$ and every $1 \leq i \leq r$. Let $Z \subseteq V_{s+3} \setminus X$ be an arbitrary fixed set of size $|Z| = 2 \ell p$. Let $(x,y), (y,y')$ and $(y',z)$ be three consecutive random forward steps. Then $Pr(z \in Z) \leq 3000 \xi^{-3} p$.
\end{lemma}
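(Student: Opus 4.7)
The plan is to condition on the second intermediate vertex $y'$ and bound $\Pr(z \in Z \mid y')$ via the degree of $y'$ into $Z$, then control the distribution of $y'$ via the two-step upper bound lemma. The naive split into ``$y'$ has small degree into $Z$'' versus ``$y'$ has large degree into $Z$'' only yields a $\sqrt{p}$ bound, which is insufficient; the key trick is a three-way split using both parts of Lemma~\ref{lem::degreeTooHigh} together with both parts of Lemma~\ref{lem::twoStepsUpperBound}.

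\textbf{Setup.} Define
\[
B=\{w\in V_{s+2}\setminus X:\deg_D^+(w,Z)\ge 14\ell p^{3/2}\}, \qquad A=\{w\in V_{s+2}\setminus X:\deg_D^+(w,Z)\ge 4\ell p^2\}.
\]
Since $|Z|=2\ell p$ lies in the ranges required by Lemma~\ref{lem::degreeTooHigh}, part~(i) gives $|A|\le\ell p$ and part~(ii) gives $|B|\le\ell p^{3/2}$. Note $B\subseteq A$ because $p\le 1$. Pad these into fixed sets $B'\supseteq B$ and $A'\supseteq A$ inside $V_{s+2}\setminus X$ with $|B'|=\ell p^{3/2}$, $|A'|=\ell p$, and $B'\subseteq A'$; this is possible since $|V_{s+2}\setminus X|\ge\ell/3$.

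\textbf{Conditional bound.} For any outcome $y'$ of the second random forward step, $y'$ is nice, so arguing exactly as in the proof of Lemma~\ref{lem::twoStepsUpperBound} (using $(\varepsilon)$-regularity of $(V_{s+3},V_{s+4})$ together with Lemma~\ref{smallDegreeVertices} and the assumption $q_1\le(1-\varepsilon')(1-\varepsilon)d\varepsilon'\ell$) the number of nice out-neighbors of $y'$ in $V_{s+3}\setminus X$ is at least $d\ell/4$. Since every vertex of $N_{D'}^+(y',Z)$ lies in $N_D^+(y',Z)$, we obtain
\[
\Pr(z\in Z\mid y')\le \frac{\deg_D^+(y',Z)}{d\ell/4}=\frac{4\deg_D^+(y',Z)}{\xi p\,\ell}.
\]

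\textbf{Three-way case analysis.} If $y'\notin A$, then $\deg_D^+(y',Z)<4\ell p^2$, giving $\Pr(z\in Z\mid y')\le 16\xi^{-1}p$. If $y'\in A\setminus B$, then $\deg_D^+(y',Z)<14\ell p^{3/2}$, giving $\Pr(z\in Z\mid y')\le 56\xi^{-1}\sqrt{p}$. If $y'\in B$, we just use $\Pr(z\in Z\mid y')\le 1$. Applying Lemma~\ref{lem::twoStepsUpperBound}(b) to $A'$ yields $\Pr(y'\in A)\le\Pr(y'\in A')\le 44\xi^{-2}\sqrt{p}$, and applying Lemma~\ref{lem::twoStepsUpperBound}(a) to $B'$ yields $\Pr(y'\in B)\le\Pr(y'\in B')\le 44\xi^{-2}p$. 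Combining,
\[
\Pr(z\in Z)\le 16\xi^{-1}p \;+\; 44\xi^{-2}\sqrt{p}\cdot 56\xi^{-1}\sqrt{p} \;+\; 44\xi^{-2}p \;\le\; 3000\,\xi^{-3}p,
\]
where we used $\xi\le 1$ (so $\xi^{-1}\le\xi^{-3}$) to absorb all terms into a single $\xi^{-3}p$.

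\textbf{Expected obstacle.} The main subtlety is the three-way, rather than two-way, split. Partitioning $V_{s+2}\setminus X$ only according to ``degree into $Z$ above or below $14\ell p^{3/2}$'' would yield a contribution of order $56\xi^{-1}\sqrt{p}$ on the larger part, which cannot be improved without the second threshold at $4\ell p^2$ furnished by Lemma~\ref{lem::degreeTooHigh}(i); only by using both thresholds simultaneously (paired with the two matching cases of Lemma~\ref{lem::twoStepsUpperBound}) does one gain the extra $\sqrt{p}$ factor needed to reach the target order $p$.
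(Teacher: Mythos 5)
Your proof is correct and follows essentially the same approach as the paper's: the same three-way split of $V_{s+2}\setminus X$ according to the two thresholds $2p|Z|=4\ell p^2$ and $7\sqrt{p}|Z|=14\ell p^{3/2}$ from Lemma~\ref{lem::degreeTooHigh}, with Lemma~\ref{lem::twoStepsUpperBound}(a)/(b) controlling the probability that $y'$ lands in the high-degree sets. The only cosmetic differences are that you pad $A,B$ to fixed-size sets before invoking Lemma~\ref{lem::twoStepsUpperBound} (slightly more careful than the paper) and you bound $\Pr(z\in Z\mid y'\in B)$ by $1$ rather than by $|Z|/(d\ell/4)$, neither of which changes the final constant.
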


\begin{proof}
Let $W_1 = \{w \in V_{s+2} \setminus X : \deg_{D'}^+(w, Z) \geq 7 \sqrt{p} |Z|\}$ and let $W_2 = \{w \in V_{s+2} \setminus X : \deg_{D'}^+(w, Z) \geq 2 p |Z|\}$; clearly $W_1 \subseteq W_2$. It follows by Lemma~\ref{lem::degreeTooHigh} (ii) that $|W_1| \leq \ell p^{3/2}$ and by Lemma~\ref{lem::degreeTooHigh} (i) that $|W_2| \leq \ell p$. Moreover, it follows by Lemma~\ref{lem::twoStepsUpperBound} that $Pr(y' \in W_1) \leq 44 \xi^{-2} p$ and that $Pr(y' \in W_2) \leq 44 \xi^{-2} \sqrt{p}$. Finally, by the definition of a random forward step, $y'$ is a nice vertex and thus 
\begin{equation} \label{eq::yIsNiceAgain}
\deg_{D'}^+(y', V_{s+3} \setminus X) \geq (1 - \varepsilon')(1 - \varepsilon) d |V_{s+3} \setminus X| \geq (1 - \varepsilon')(1 - \varepsilon) d \ell/3 \,.
\end{equation}

Let $W = \{w \in N_{D'}^+(y', V_{s+3} \setminus X) : w \textrm{ is nice}\}$. Since the pair $(V_{s+3}, V_{s+4})$ is $(\varepsilon)$-regular with directed density $d$, $q_1 \leq (1 - \varepsilon')(1 - \varepsilon) d \varepsilon' \ell \leq (1 - \varepsilon')(1 - \varepsilon)^2 d \ell/3$ and $|V_{s+4} \setminus X| \geq \varepsilon \ell$, it follows by Lemma~\ref{smallDegreeVertices} and by~\eqref{eq::yIsNiceAgain} that
\begin{equation} \label{eq::largeDegreeAgain}
|W| \geq (1 - \varepsilon')(1 - \varepsilon) d \ell/3 - \varepsilon \ell \geq d \ell/4 \,.
\end{equation}

We conclude that
\begin{eqnarray*}
Pr(z \in Z) &=& Pr(y' \in W_1) Pr(z \in Z \mid y' \in W_1) + Pr(y' \in W_2 \setminus W_1) Pr(z \in Z \mid y' \in W_2 \setminus W_1) \\ 
&+& Pr(y' \notin W_2) Pr(z \in Z \mid y' \notin W_2) \\
&\leq& 44 \xi^{-2} p \cdot \frac{|Z|}{d \ell/4} + 44 \xi^{-2} \sqrt{p} \cdot \frac{7 \sqrt{p} |Z|}{d \ell/4} + 1 \cdot \frac{2 p |Z|}{d \ell/4} \\
&\leq& 352 \xi^{-3} p + 2464 \xi^{-3} p + 16 \xi^{-1} p \\ 
&\leq& 3000 \xi^{-3} p \,. 
\end{eqnarray*} 
\end{proof}

\section{Proof of the main result}
\label{sec::proofMain}

We start with the upper bound in Theorem~\ref{directedHam}; we will in fact prove the following stronger result. Let $D \in {\mathcal D}(n,p)$, where $p \gg \log n/n$. Then a.a.s. one can delete at most $\left(1/2 + 10 \sqrt{\frac{\log n}{n p}} \right) \deg_D^+(u)$ of the out-going arcs and at most $\left(1/2 + 10 \sqrt{\frac{\log n}{n p}} \right) \deg_D^-(u)$ of the in-going arcs at every vertex $u \in V(D)$ so that the resulting digraph is non-Hamiltonian; note that $10 \sqrt{\frac{\log n}{n p}} = o(1)$ for $p \gg \log n/n$. 

\textbf{Proof of Theorem~\ref{directedHam} (upper bound):}
Let $V = V_1 \cup V_2$ be an arbitrary partition of $[n]$ into two parts of equal size (that is, $||V_1| - |V_2|| \leq 1$). Let $D = ([n],E) \in {\mathcal D}(n,p)$. It follows by Theorem~\ref{th::Chernoff} (iii) and union bound, that a.a.s. every $v \in [n]$ satisfies $|\deg_D^+(v) - n p| \leq 4 \sqrt{n p \log n}$ and $|\deg_D^-(v) - n p| \leq 4 \sqrt{n p \log n}$. Let $u \in V_1$ be an arbitrary vertex, then clearly $\deg^+_D(u,V_2) \sim Bin(|V_2|, p)$. In particular, $(n-1)p/2 \leq \mathbb{E}(\deg^+_D(u,V_2)) \leq (n+1)p/2$. Since a.a.s. $\deg_D^+(u) \geq n p - 4 \sqrt{n p \log n}$, it follows by Theorem~\ref{th::Chernoff} (ii) that 
\begin{eqnarray*}
&& Pr \left(\deg_D^+(u,V_2) \geq \left(1/2 + 10 \sqrt{\frac{\log n}{n p}} \right) \deg_D^+(u) \right)\\ 
&\leq& Pr \left(\deg_D^+(u,V_2) \geq \left(1 + 20 \sqrt{\frac{\log n}{n p}} \right) (np/2 - 2\sqrt{n p \log n}) \right)\\
&\leq& Pr \left(\deg_D^+(u,V_2) \geq \left(1 + 15 \sqrt{\frac{\log n}{n p}} \right) \mathbb{E}(\deg^+(u,V_2)) \right) \\ 
&\leq& e^{- \frac{225 \log n}{3 n p} \cdot \frac{(n-1) p}{2}} = o(1/n) \,. 
\end{eqnarray*}

Taking the union bound over all vertices of $V_1$, we conclude that a.a.s. for every $u \in V_1$ we have $\deg_D^+(u,V_2) \leq \left(1/2 + 10 \sqrt{\frac{\log n}{n p}} \right) \deg_D^+(u)$. An analogous argument shows that a.a.s. $\deg_D^-(w,V_1) \leq \left(1/2 + 10 \sqrt{\frac{\log n}{n p}} \right) \deg_D^-(w)$ for every $w \in V_2$. Our claim now follows since one can obtain a non-Hamiltonian digraph by deleting all arcs of $D$ that are oriented from $V_1$ to $V_2$. In particular, $r_{\ell}({\mathcal D}(n,p), {\mathcal H}) \leq \left (1/2 + 10 \sqrt{\frac{\log n}{n p}} \right) n p$ a.a.s. 
{\hfill $\Box$ \medskip\\}

The remainder of this section is devoted to the proof of the lower bound. Namely, we will prove that a.a.s. even if an adversary deletes
at most $\left(1/2 - \alpha \right) \deg_D^+(u)$ of the out-going arcs and at most $\left(1/2 - \alpha \right) \deg_D^-(u)$ of the in-going arcs at every vertex $u \in V(D)$, where $\alpha > 0$ is an arbitrarily small constant, there is still a Hamilton cycle in the resulting digraph. Let $\varepsilon', \rho, \lambda$ and $\xi$ be positive real numbers and let $m$ be a positive integer such that $m^{-1} \ll \lambda \ll \rho \ll \varepsilon' \ll \xi \ll \alpha$ and moreover $\lambda \ll \xi \varepsilon'$ and $\varepsilon' \ll \alpha^8 \xi^6$, where for positive real numbers $a,b$ the notation $a \ll b$ means that $a/b$ is a sufficiently small real number. Let $\delta = \xi p$, let $\varepsilon_1 = \varepsilon_1(\xi, \lambda, \rho, \varepsilon')$ be the real number whose existence follows from Lemma~\ref{badConfigurationsI}, let $\varepsilon_2 = \varepsilon_2(\xi, \lambda, \rho, \varepsilon')$ be the real number whose existence follows from Lemma~\ref{badConfigurationsII} and let $\varepsilon = \min \{\varepsilon_1, \varepsilon_2\}$. Note that $\varepsilon \leq \varepsilon'$. Let $D \in {\mathcal D}(n,p)$. Note that a.a.s. $|\deg^+_D(u) - n p| \leq 4 \sqrt{n p \log n}$ and $|\deg^-_D(u) - n p| \leq 4 \sqrt{n p \log n}$ hold for every vertex $u \in V(D)$. Hence, we will assume throughout the proof that $D$ satisfies these properties. Fix some $L > 1$ and let $0 < \eta = \eta(m, \varepsilon \xi/2, L) \leq 1$ be the constant whose existence is guaranteed by Lemma~\ref{diregularity}. Since, by Remark~\ref{$D(n,p)$IsBounded}, $D$ is a.a.s. $(\eta, L, p)$-bounded, we assume throughout the proof that it is. Let $D'=(V,E)$ be a digraph obtained from $D$ by deleting at most $\left(1/2 - \alpha \right) \deg_D^+(u)$ of the out-going arcs and at most $\left(1/2 - \alpha \right) \deg_D^-(u)$ of the in-going arcs at every vertex $u \in V(D)$. Note that $D'$ is $(\eta, L, p)$-bounded as well.

Apply Lemma~\ref{diregularity} to $D'=(V,E)$ with parameters $\varepsilon \xi/2$, $L$ and $m$. Let $\{V_0, V_1, \ldots, V_k\}$ be the corresponding $(\varepsilon \xi/2, p)$-regular partition, and let $R = R(D', \delta)$ be the corresponding regularity digraph. It follows by the definition of $R$ that the ordered pair $(V_i, V_j)$ is $(\varepsilon \xi/2, p)$-regular with directed density at least $\delta$ whenever $(v_i, v_j) \in E(R)$. Note that if $d_{D'}(V_i, V_j) \geq p$, then, by Observation~\ref{obs::regularity1}, the pair $(V_i, V_j)$ is $(\varepsilon/2)$-regular. If on the other hand $d_{D'}(V_i, V_j) < p$, then, since $d_{D'}(V_i, V_j) \geq \delta = \xi p$, it follows by Observations~\ref{obs::regularity2} and~\ref{obs::regularity1} that the pair $(V_i, V_j)$ is $(\varepsilon/2)$-regular. It thus follows by Lemma~\ref{lem::GS} that we can assume that $(V_i, V_j)$ is $(\varepsilon)$-regular with directed density $\delta$ for every $1 \leq i \neq j \leq k$ such that $(v_i, v_j) \in E(R)$. Let $\ell$ denote the common size of $V_1, \ldots, V_k$; note that $(1 - \varepsilon)n/k \leq \ell \leq n/k$. 

We first show that $R$ contains an almost spanning cycle; our proof will use the following immediate corollary of a classical theorem of Ghouila-Houri~\cite{GH}.

\begin{theorem} \label{th::CorOfGH}
Let $D$ be a digraph on $n$ vertices. If $\delta^+(D) \geq n/2$ and $\delta^-(D) \geq n/2$, then $D$ admits a directed Hamilton cycle. 
\end{theorem}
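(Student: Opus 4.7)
The plan is to deduce Theorem~\ref{th::CorOfGH} directly from the classical formulation of Ghouila-Houri's theorem, which guarantees a directed Hamilton cycle in any strongly connected digraph $D$ on $n \geq 2$ vertices satisfying $\deg_D^+(v) + \deg_D^-(v) \geq n$ for every $v \in V(D)$. The degree-sum hypothesis is immediate from the assumptions $\delta^+(D) \geq n/2$ and $\delta^-(D) \geq n/2$, so the only real content is verifying that $D$ is strongly connected.

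To prove strong connectivity, I would fix an arbitrary ordered pair of distinct vertices $u, v \in V(D)$ and exhibit a directed path from $u$ to $v$. Observe that $N_D^+(u) \subseteq V(D) \setminus \{u\}$ and $N_D^-(v) \subseteq V(D) \setminus \{v\}$, and by hypothesis each has size at least $n/2$. Standard inclusion-exclusion applied inside $V(D)\setminus\{u,v\}$ yields
\[
|N_D^+(u) \cap N_D^-(v)| \geq |N_D^+(u)| + |N_D^-(v)| - (n-2) \geq 2,
\]
provided neither $v \in N_D^+(u)$ nor $u \in N_D^-(v)$. In either case one immediately obtains a directed path from $u$ to $v$ of length at most two; since $u,v$ were arbitrary, $D$ is strongly connected. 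Invoking the classical Ghouila-Houri theorem then delivers the desired directed Hamilton cycle.

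There is essentially no obstacle: the statement is a well-known textbook reformulation of Ghouila-Houri's theorem, and the only non-trivial observation — that the one-sided minimum-degree bounds already force strong connectivity — is the short counting argument above. If one preferred a self-contained proof, the standard alternative would be a rotation/extension argument applied to a longest directed cycle, but given that the excerpt explicitly labels this as an "immediate corollary," the reduction route is the natural one.
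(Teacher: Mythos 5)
Your proof is correct and matches the paper's implicit intent: the paper offers no explicit argument, simply labelling the statement an ``immediate corollary'' of Ghouila-Houri's theorem, which in its classical form assumes strong connectivity together with $\deg^+(v)+\deg^-(v)\geq n$ for all $v$. Your short inclusion-exclusion verification that the one-sided bounds $\delta^+(D),\delta^-(D)\geq n/2$ already force strong connectivity (either $(u,v)\in E(D)$, or else $N^+_D(u)$ and $N^-_D(v)$ are both subsets of the $(n-2)$-element set $V(D)\setminus\{u,v\}$ and so must intersect) is precisely the missing observation, so the reduction goes through exactly as you describe.
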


\begin{lemma} \label{lem::longCycle}
Conditioned on the properties of ${\mathcal D}(n,p)$ mentioned above, $R$ contains a directed cycle of length $r \geq (1 - 2\sqrt{\varepsilon})k$.
\end{lemma}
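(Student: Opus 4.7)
The plan is to delete vertices of small in- or out-degree from $R$ until the remainder $R'$ satisfies the minimum-degree hypothesis of Theorem~\ref{th::CorOfGH}, which then supplies a Hamilton cycle in $R'$ and hence the desired long cycle in $R$. Formally, set $R_0 := R$ and, as long as some $v$ in the current subdigraph $R_i$ has $\deg_{R_i}^+(v) < |R_i|/2$ or $\deg_{R_i}^-(v) < |R_i|/2$, remove it to obtain $R_{i+1}$. Let $S$ denote the final set of deleted vertices. If $|S| \leq 2\sqrt{\varepsilon}\,k$, then $R \setminus S$ fulfils the hypothesis of Theorem~\ref{th::CorOfGH} and I am done. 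So I suppose $|S| > 2\sqrt{\varepsilon}\,k$ and aim for a contradiction; by symmetry, at least $\sqrt{\varepsilon}\,k$ vertices --- call this set $S^-$ --- were deleted owing to insufficient in-degree at the moment of deletion.

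For each $v_i \in S^-$, the in-degree in $R$ satisfies $\deg_R^-(v_i) \leq k/2 + |S| \leq (1/2 + 2\sqrt{\varepsilon})k$, so at least $(1/2 - 2\sqrt{\varepsilon})k$ indices $j$ satisfy $(v_j, v_i) \notin E(R)$; each such missing arc means that $(V_j, V_i)$ is either not $(\varepsilon\xi/2, p)$-regular or has $d_{D'}(V_j, V_i) < \xi p$. Let $B$ be the exceptional set given by Remark~\ref{fewLowDegVertices}, so $|B| \leq \sqrt{\varepsilon\xi/2}\,k \leq \tfrac{1}{2}\sqrt{\varepsilon}\,k$ (valid since $\xi \leq 1/2$). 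Set $S^\star := S^- \setminus B$; then $|S^\star| \geq \tfrac{1}{2}\sqrt{\varepsilon}\,k$, and for every $v_i \in S^\star$ at least $(1/2 - 3\sqrt{\varepsilon})k$ of the missing arcs correspond to $(\varepsilon\xi/2, p)$-regular pairs $(V_j, V_i)$ with $d_{D'}(V_j, V_i) < \xi p$.

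The contradiction will come from double-counting the in-arc loss between $D$ and $D'$ at the set $I := \bigcup_{v_i \in S^\star} V_i$ of size $\ell |S^\star|$. On one hand, the local resilience hypothesis together with the conditioned estimate $\deg_D^-(u) = (1 + o(1))np$ yields
\[
\sum_{u \in I}\bigl(\deg_D^-(u) - \deg_{D'}^-(u)\bigr) \;\leq\; \bigl(\tfrac{1}{2} - \alpha\bigr)(1 + o(1))\,|I|\,np.
\]
For the matching lower bound, I note that a.a.s.\ $e_D(A, B) = (1 \pm o(1))|A||B|p$ for every pair of disjoint sets $A, B \subseteq V$ of size $\ell$, by a Chernoff bound plus union bound (which succeeds because $p\ell \gg \log n$ thanks to $p \gg \log n/\sqrt{n}$ and $\ell = \Theta(n)$). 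Hence for each $v_i \in S^\star$ and each of the $\geq (1/2 - 3\sqrt{\varepsilon})k$ regular low-density pairs $(V_j, V_i)$ we have
\[
e_D(V_j, V_i) - e_{D'}(V_j, V_i) \;\geq\; (1 - o(1))p\ell^2 - \xi p \ell^2 \;=\; (1 - \xi - o(1))p\ell^2.
\]
Summing over $v_i \in S^\star$ and using $k\ell \geq (1 - \varepsilon)n$, the total loss is at least $\bigl(\tfrac{1}{2} - 3\sqrt{\varepsilon}\bigr)(1 - \xi - o(1))(1 - \varepsilon)\,|I|\,np$, which strictly exceeds the upper bound above because $\sqrt{\varepsilon}, \xi \ll \alpha$ by the chosen parameter hierarchy; this is the desired contradiction.

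The main obstacle is the careful accounting of the two error sources --- the $O(\sqrt{\varepsilon})k$ ``unaccounted'' missing in-neighbours (arising from the deletion slack together with the exceptional set $B$ of Remark~\ref{fewLowDegVertices}) and the $\xi p$ density threshold below which a regular pair is counted as ``missing'' --- and ensuring that their combined effect stays strictly below the $\alpha np$ budget per vertex afforded by the local resilience assumption. This is precisely why the hierarchy $m^{-1} \ll \lambda \ll \rho \ll \varepsilon' \ll \xi \ll \alpha$ was fixed at the start of the section.
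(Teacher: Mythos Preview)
Your overall strategy matches the paper's: delete low-degree vertices from $R$ until Theorem~\ref{th::CorOfGH} applies, and show that not too many deletions occur. However, there is a genuine logical slip. You assume $|S| > 2\sqrt{\varepsilon}\,k$ for contradiction and then write
\[
\deg_R^-(v_i) \;\leq\; k/2 + |S| \;\leq\; (1/2 + 2\sqrt{\varepsilon})k,
\]
which uses $|S| \leq 2\sqrt{\varepsilon}\,k$. These two inequalities are incompatible: under your hypothesis $|S|$ may be as large as $k$, in which case the bound on $\deg_R^-(v_i)$ is vacuous and the rest of the argument collapses.

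The repair is standard. Instead of working with the full set $S$, stop the deletion process after exactly $\lceil 2\sqrt{\varepsilon}\,k\rceil$ removals (which happens by hypothesis) and let $S$ denote just these vertices. Then at least $\sqrt{\varepsilon}\,k$ of them were removed for low in-degree, and for each such $v_i$ at most $2\sqrt{\varepsilon}\,k$ vertices were deleted before it, so indeed $\deg_R^-(v_i) < k/2 + 2\sqrt{\varepsilon}\,k$. Your double-counting over $I = \bigcup_{v_i \in S^\star} V_i$ then goes through unchanged.

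For comparison, the paper sidesteps this ordering issue entirely by reversing the logic: it first proves directly that any $v_i$ with at most $\sqrt{\varepsilon}\,k$ irregular out-pairs must already satisfy $\deg_R^+(v_i) \geq k/2 + 2\sqrt{\varepsilon}\,k$ in $R$ itself (else some single $u \in V_i$ lost more than $(1/2 - \alpha)\deg_D^+(u)$ out-arcs), and symmetrically for in-degrees. Thus all but $2\sqrt{\varepsilon}\,k$ vertices of $R$ start with a degree buffer of $2\sqrt{\varepsilon}\,k$, so the deletion process removes at most those $2\sqrt{\varepsilon}\,k$ vertices. The paper's per-vertex contradiction is a bit cleaner than your aggregate double-count, but either route works once the bound on $|S|$ in your argument is fixed.
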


\begin{proof}
Let $1 \leq i \leq k$ be an index for which there are at most $\sqrt{\varepsilon}k$ indices $1 \leq j \neq i \leq k$ such that $(V_i,V_j)$ is not $(\varepsilon)$-regular (recall that by Remark~\ref{fewLowDegVertices}, at least $(1 - \sqrt{\varepsilon})k$ indices $1 \leq i \leq k$ have this property). Let $1 \leq j \neq i \leq k$ be such that $(V_i,V_j)$ is an $(\varepsilon)$-regular pair but $(v_i,v_j) \notin E(R)$. Since $n \ll \ell^2 p$, we can use Theorem~\ref{th::Chernoff} (i) and union bound to show that a.a.s. $e_D(V_i, V_j) \geq (1-\alpha/5) \ell^2 p$. Since $(v_i,v_j) \notin E(R)$ even though $(V_i,V_j)$ is $(\varepsilon)$-regular, it must hold that $d_{D'}(V_i,V_j) < \delta$. Hence, recalling that $\delta = \xi p$, we conclude that at least $(1 - \xi - \alpha/5) \ell^2 p$ arcs of $E_D(V_i, V_j)$ were deleted from $D$ in order to obtain $D'$. Recall that $\xi, \sqrt{\varepsilon} \ll \alpha$. If $\deg_R^+(v_i) < k/2 + 2\sqrt{\varepsilon}k$, then at least 
\begin{eqnarray*}
\left(k - 1 - \sqrt{\varepsilon}k - (k/2 + 2\sqrt{\varepsilon}k)\right) \left(1 - \xi - \alpha/5 \right) \ell^2 p 
&\geq& \left(1/2 - 4\sqrt{\varepsilon} - \xi - \alpha/5 \right) k \ell^2 p \\ 
&>& \left(1/2 - \alpha/3 \right) (1 - \varepsilon) n \ell p\\ 
&>& \left(1/2 - \alpha/2 \right) \ell n p
\end{eqnarray*}
arcs of $E_D(V_i, [n] \setminus V_i)$ were deleted from $D$ to obtain $D'$. Since a.a.s. the maximum out-degree of $D$ is at most $n p + 4 \sqrt{n p \log n}$, it follows that there exists some vertex $u \in V_i$ such that strictly more than $\left(1/2 - \alpha \right) \deg_D^+(u)$ out-going arcs which are incident with $u$ in $D$ were deleted to obtain $D'$, contrary to our assumption. Therefore  $\deg_R^+(v_i) \geq k/2 + 2\sqrt{\varepsilon}k$. Since the same argument applies to every $1 \leq i \leq k$ for which there are at most $\sqrt{\varepsilon}k$ indices $1 \leq j \neq i \leq k$ such that $(V_i,V_j)$ is not $(\varepsilon)$-regular, it follows by Remark~\ref{fewLowDegVertices} that at least $(1 - \sqrt{\varepsilon})k$ vertices of $R$ have out-degree at least $k/2 + 2 \sqrt{\varepsilon}k$ each. An analogous argument shows that at least $(1 - \sqrt{\varepsilon})k$ vertices of $R$ have in-degree at least $k/2 + 2 \sqrt{\varepsilon}k$ each.

Let $R'$ be the graph obtained from $R$ by successively deleting vertices whose out-degree or in-degree is strictly smaller than $k/2$. It follows by the previous paragraph that $(1 - 2\sqrt{\varepsilon})k \leq |V(R')| \leq k$. Moreover, $\min \{\delta^+(R'), \delta^-(R')\} \geq k/2 \geq |V(R')|/2$ holds by the definition of $R'$. Applying Theorem~\ref{th::CorOfGH} to $R'$ completes the proof of the lemma.
\end{proof}

Assume without loss of generality that $C_R : v_1, v_2, \ldots, v_r, v_1$ is a cycle of $R$ of length $r \geq (1 - 2 \sqrt{ \varepsilon})k$. Note that it follows from the definition of $R$ that the pair $(V_i, V_{(i \mod r) + 1})$ is $(\varepsilon)$-regular with directed density $\delta$, for every $1 \leq i \leq r$. For the sake of simplicity of presentation we will discard the ``mod $r$'' in the rest of the proof. Hence $V_{i+1}$ will mean $V_1$ in case $i=r$ and $V_{i-1}$ will mean $V_r$ in case $i=1$.

We now show how one can find a Hamilton cycle of $D'$. This is done in four stages. In the first stage we build a path $P_1$ of $D'$ that includes certain ``problematic'' vertices (while certain other ``problematic'' vertices are intentionally avoided and their inclusion is postponed to the fourth stage). In the second stage we extend the path that was built in the first stage to an ``almost spanning'' path $P_2$ such that, for every $u \in V \setminus P_2$, there are ``many'' arcs $(x,y) \in E(P_2)$ for which $(x,u) \in E(D')$ and $(u,y) \in 
E(D')$. In the third stage we close the path that was built in the second stage into a cycle. Finally, in the fourth stage we extend this cycle to a Hamilton cycle by adding all remaining vertices.

\subsection{Stage 1: Absorbing problematic vertices into a short path} 
\label{subsec::stage1}

In this subsection we build a directed path $P_1$ of $D' = (V,E)$ which includes certain problematic vertices (by abuse of notation, $P_1$ will denote the path we build at any point during Stage 1; moreover, we will use $P_1$ to denote the path as well as its vertex set). We begin by describing the different types of problematic vertices we will deal with. 

Let $B$ denote the set of vertices $u \in V$ for which there exists an index $1 \leq i \leq r$ such that $|\deg^+_D(u,V_i) - \ell p| \geq \varepsilon \ell p$ or $|\deg^-_D(u,V_i) - \ell p| \geq \varepsilon \ell p$. It follows by Lemma~\ref{badSet} that a.a.s. $|B| \leq r p^{-1} \log n \leq \rho \ell$.

Let $T_2 \subseteq V$ denote the set of bad vertices of type II (with respect to $D'$, $V_1, \ldots, V_r$, $\ell, \alpha, \varepsilon, \varepsilon', \delta$, $\lambda \ell p$, and $2 \ell p$). It follows by Lemma~\ref{badConfigurationsII} that a.a.s. $|T_2| \leq \rho \ell$. In particular, $|V_i \cap T_2| \leq \rho \ell$ holds for every $1 \leq i \leq r$. 

The vertices of $B \cup T_2$ are the so-called \emph{problematic} vertices we wish to include in $P_1$ in Stage 1. 

When building $P_1$ we will use some of the steps which were defined in the previous section (see Definitions~\ref{def::stepTypes} and~\ref{def::bigStep}). We thus need to avoid bad vertices of type I with respect to appropriate pairs of sets. For every $1 \leq i \leq r$, let $U_i$ denote the set of vertices of $V_i$ which are bad of type I with respect to $D'$, $\ell$, $V_{i+1}$, $V_{i+2}$, $\varepsilon$, $\varepsilon'$, $\delta$, $\lambda \ell p$, and $2 \ell p$. Similarly, for every $1 \leq i \leq r$, let $W_i$ denote the set of vertices of $V_i$ which are bad of type I with respect to $D'$, $\ell$, $V_{i-2}$, $V_{i-1}$, $\varepsilon$, $\varepsilon'$, $\delta$, $\lambda \ell p$, and $2 \ell p$. By Lemma~\ref{badConfigurationsI} we can assume that $|U_i| \leq \rho \ell$ and $|W_i| \leq \rho \ell$ hold for every $1 \leq i \leq r$. Let $T_1 = \left(\bigcup_{i=1}^r U_i \right) \cup \left(\bigcup_{i=1}^r W_i \right)$. It follows that $|V_i \cap T_1| \leq 2 \rho \ell$ holds for every $1 \leq i \leq r$.  

While building $P_1$, we might include in this path many of the neighbors of some vertex $u \in V \setminus P_1$, thus making it hard to include $u$ in the Hamilton cycle we aim to build. In order to avert this problem, as soon as $P_1$ includes too many neighbors of some vertex $u \in V \setminus P_1$, we will declare $u$ to be dangerous and will try to add it to $P_1$. This notion of \emph{dangerous vertices} is made precise by the following definition.  

\begin{definition} \label{def::dangerousVertex}
A vertex $w \in V \setminus P_1$ is called dangerous if $\deg_{D'}(w, (V \setminus \bigcup_{i=1}^r V_i) \cap (B \cup T_1 \cup T_2 \cup P_1)) \geq n p/20$ or there exists some $1 \leq i \leq r$ such that $\deg_{D'}(w, V_i \cap (B \cup T_1 \cup T_2 \cup P_1)) \geq 100 \rho \ell p$.
\end{definition}

At any point during Stage 1, let $L_1 = B \cup T_1 \cup T_2 \cup P_1$ and let $Dg = Dg(P_1)$ denote the set of dangerous vertices. Note that 
\begin{equation} \label{eq::smallL1}
|V_i \cap L_1| \leq 4 \rho \ell + |V_i \cap P_1|
\end{equation}
holds for every $1 \leq i \leq r$ throughout Stage 1.

Moreover, since $T_1 \subseteq \bigcup_{i=1}^r V_i$, it follows that
\begin{equation} \label{eq::totalL1}
|L_1| \leq |P_1| + |B| + |T_2| + \sum_{i=1}^r |V_i \cap T_1| \leq |P_1| + 4 \rho n \,. 
\end{equation}

We are now ready to describe our algorithm for building $P_1$. For every vertex $u \in \bigcup_{j=1}^r V_j$ we denote by $i(u)$ the unique index $1 \leq i \leq r$ such that $u \in V_i$.  

\begin{algorithm}[H]
\caption{Build $P_1$}
\begin{algorithmic} 
\STATE $v_0 \leftarrow$ arbitrary very nice vertex of $V_1$ with respect to $L_1$ 
\STATE $x \leftarrow v_0$
\STATE $A_0 \leftarrow$ arbitrary subset of $N^-_{D'}(v_0, V_r \setminus L_1)$ of size $\lambda \ell p$
\WHILE{$(B \cup T_2) \setminus P_1 \neq \emptyset$ or $Dg \neq \emptyset$}
\IF{$Dg \neq \emptyset$}
\STATE $v \leftarrow$ arbitrary element of $Dg$
\ELSE[$(B \cup T_2) \setminus P_1 \neq \emptyset$]
\STATE $v \leftarrow$ arbitrary element of $(B \cup T_2) \setminus P_1$
\ENDIF
\STATE $(x, v_0, A_0) \leftarrow ADD(v; x, v_0, A_0)$
\ENDWHILE
\end{algorithmic}
\end{algorithm}

\bigskip

\begin{algorithm}
\caption{$ADD(v; x, v_0, A_0)$}
\begin{algorithmic}
\IF{$v \in A_0$}
\STATE $v_0 \leftarrow$ standard backward step from $v_0$ with respect to $L_1$
\STATE $A_0 \leftarrow$ arbitrary subset of $N^-_{D'}(v_0, V_{i(v_0)-1} \setminus L_1)$ of size $\lambda \ell p$
\ENDIF
\STATE $I \leftarrow \{z \in V \setminus (L_1 \cup A_0) : \exists y \in V \setminus (L_1 \cup A_0) \textrm{ such that } (z,y), (y,v) \in E\}$
\STATE $j_1 \leftarrow$ arbitrary element of $[r]$ such that $|I \cap V_{j_1}| \geq \ell/3$ 
\STATE $O \leftarrow \{z \in V \setminus (L_1 \cup A_0) : \exists y \in V \setminus (L_1 \cup A_0) \textrm{ such that } (v,y), (y,z) \in E\}$
\STATE $j_2 \leftarrow$ arbitrary element of $[r]$ such that $|O \cap V_{j_2}| \geq \ell/3$
\WHILE{$i(x) + 2 \neq j_1$}
\STATE $x \leftarrow$ standard forward step from $x$ with respect to $L_1 \cup A_0$
\ENDWHILE
\STATE $x \leftarrow$ big step from $x$ via $v$ with respect to $L_1 \cup A_0$
\RETURN $(x, v_0, A_0)$
\end{algorithmic}
\end{algorithm}

It remains to prove that Algorithm 1 works. Except for the subroutine ADD, the only non-trivial part of Algorithm 1 is the existence of a very nice vertex $v_0$ (from which the existence of the set $A_0$ will readily follow). Let $N_1 = \{u \in V_1 \setminus L_1 : u \textrm{ is nice with respect to } L_1\}$ and let $N_2 = \{u \in V_1 \setminus L_1 : u \textrm{ is backwards nice with respect to } L_1\}$; it suffices to prove that $N_1 \cap N_2 \neq \emptyset$.  Since the pair $(V_1, V_2)$ is $(\varepsilon)$-regular with directed density $\delta$, it follows by~\eqref{eq::smallL1} and by Lemma~\ref{smallDegreeVertices} that $\deg_{D'}^+(x, V_2 \setminus L_1) \geq (1 - \varepsilon) \delta |V_2 \setminus L_1| \geq \lambda \ell p$ holds for all but at most $\varepsilon \ell$ vertices of $V_1 \setminus L_1$. Since $B \subseteq L_1$, it follows that $\deg_{D'}^+(x, V_2 \setminus L_1) \leq 2 \ell p$ holds for every vertex $x \in V_1 \setminus L_1$. We conclude that $|N_1| > \ell/2$. A similar argument shows that $|N_2| > \ell/2$ holds as well and thus $N_1 \cap N_2 \neq \emptyset$ as claimed. Note that $T_1 \subseteq L_1$ and thus $v_0$ is not bad of type I with respect to $V_{r-1}, V_r$ and is not bad of type I with respect to $V_2, V_3$.    

Our next goal is to prove that the subroutine ADD works as well. We will do so under additional assumptions and will then prove that these assumptions hold throughout Stage 1. We first prove that the indices $j_1$ and $j_2$ mentioned in lines 6 and 8 of the subroutine ADD exist. 

\begin{lemma} \label{lem::OutNeighbours}
Let $I_v = N^-_{D'}(v, V \setminus (L_1 \cup A_0))$, $\bar{I}_v = N^-_{D'}(I_v, V \setminus (L_1 \cup A_0))$, $O_v = N^+_{D'}(v, V \setminus (L_1 \cup A_0))$ and $\bar{O}_v = N^+_{D'}(O_v, V \setminus (L_1 \cup A_0))$. If $|P_1| \leq n/100$ and $\deg_{D'}(v, L_1) \leq n p/5$, then there exist indices $1 \leq j_1 \neq j_2 \leq r$, such that $|V_{j_1} \cap \bar{I}_v| \geq \ell/3$ and $|V_{j_2} \cap \bar{O}_v| \geq \ell/3$.  
\end{lemma}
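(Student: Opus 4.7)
The proof will combine the adversary's per-vertex deletion budget with standard concentration for the random digraph $D$. The plan is: (1) lower-bound $|O_v|$ and $|I_v|$; (2) use a budget-counting argument to show that $|\bar O_v|$ and $|\bar I_v|$ each contain at least about $n/2$ vertices; (3) average over the partition classes $V_1,\ldots,V_r$ and choose distinct indices $j_1\ne j_2$.

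For (1), a.a.s.\ $\deg_D^\pm(v) = (1\pm o(1))np$, so the adversary's constraint yields $\deg_{D'}^\pm(v) \ge (1/2+\alpha-o(1))np$. Combining with the hypothesis $\deg_{D'}(v,L_1)\le np/5$ and $|A_0| = \lambda\ell p = o(np)$, we obtain $|O_v|,|I_v| \ge (3/10+\alpha-o(1))np$.

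For (2), let $R = V \setminus (L_1 \cup A_0 \cup \bar O_v)$. By definition every $u \in R$ has no in-arc from $O_v$ in $D'$, so each arc of $E_D(O_v, R)$ was deleted in passing from $D$ to $D'$ and is charged to the out-budget of some $w \in O_v$; summing over $O_v$,
\[
|E_D(O_v, R)| \le (1/2-\alpha)(1+o(1))|O_v|np.
\]
The complementary lower bound $|E_D(S,T)| \ge (1-o(1))|S||T|p$, valid a.a.s.\ in $D\in\mathcal{D}(n,p)$ for \emph{every} disjoint pair with $|S|\ge np/4$ and $|T|\ge n/3$, follows from a standard Chernoff estimate and a $4^n$ union bound; the error is comfortably beaten by the hypothesis $np^2\gg\log^2 n$. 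Applying this with $S=O_v$, $T=R$ (the case $|R|<n/3$ is trivial) forces $|R|\le(1/2-\alpha+o(1))n$, whence $|\bar O_v|\ge(1/2+\alpha-o(1))n - |L_1\cup A_0|$. Invoking~\eqref{eq::totalL1} together with the orderings $\lambda\ll\rho\ll\alpha$ then gives $|\bar O_v|\ge 0.48\,n$. The argument for $\bar I_v$ is identical except that the deleted arcs are charged to the in-budget of the vertices of $I_v$ rather than to the out-budget of $O_v$. The main subtlety I expect to navigate is that $O_v$, $I_v$ and $R$ depend on the adversary's choices, so the Chernoff lower bound must be quantified over \emph{all} pairs of sets of the appropriate sizes rather than for the specific ones arising in the argument; fortunately the $4^n$ union bound absorbs this easily.

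For (3), the ``off-cycle'' mass $|V_0| + |\bigcup_{j>r} V_j|$ is at most $3\sqrt{\varepsilon}\,n\ll\alpha n$, so $\sum_{j=1}^r |V_j\cap\bar O_v|\ge 0.47\,n$. If only one index contributed at least $\ell/3$, the sum would be bounded by $\ell + (r-1)\ell/3 \le n/3 + 2\ell/3$, which is strictly less than $0.47\,n$ for $r$ sufficiently large (guaranteed by $r\ge(1-2\sqrt{\varepsilon})k\ge m$). Hence at least two indices satisfy $|V_j\cap\bar O_v|\ge\ell/3$, and likewise for $\bar I_v$, which permits us to select distinct $j_1\ne j_2$ as required.
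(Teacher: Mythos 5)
Your proof is correct and follows essentially the same strategy as the paper's: lower-bound $|O_v|$ and $|I_v|$ via the adversary budget, use a Chernoff-plus-union-bound estimate on $e_D(O_v,\,\cdot\,)$ to force $|\bar O_v|, |\bar I_v|$ to be a large fraction of $n$, and then average over $V_1,\ldots,V_r$. The only differences are cosmetic: in step (2) you sum the per-vertex out-budget over all of $O_v$, whereas the paper pigeonholes to a single $u\in O_v$ whose budget is exceeded; and in step (3) you explicitly verify that at least two indices qualify for each of $\bar I_v$ and $\bar O_v$ so that $j_1\ne j_2$ can be guaranteed, a point the paper's proof leaves implicit.
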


\begin{proof}
We will prove the existence of $j_2$; the existence of $j_1$ can be proved by a similar argument. Since $(1 - o(1))n p \leq \deg^+_D(v) \leq (1 + o(1))n p$, it follows by the definition of $D'$ that $\deg^+_{D'}(v) \geq (1/2 - o(1))n p$. Moreover, since $\deg_{D'}^+(v, L_1) \leq \deg_{D'}(v, L_1) \leq n p/5$, it follows that $|O_v| \geq (1/2 - 1/5 - \lambda - o(1))n p \geq n p/5$. We claim that $|\bar{O}_v| \geq 2n/5$. Indeed, suppose for a contradiction that $Z_v := V \setminus (L_1 \cup A_0 \cup \bar{O}_v)$ is of size at least 
$$
3n/5 - |P_1| - |B \cup T_1 \cup T_2| - |A_0| \geq (3/5 - 1/100 - 4 \rho - \lambda)n \geq 0.58 n \,,
$$ 
where we used~\eqref{eq::totalL1} in the first inequality. It follows by Theorem~\ref{th::Chernoff} (i) and union bound that 
\begin{eqnarray*}
Pr(e_D(O_v, Z_v) < |O_v| \cdot 0.51 n p) &\leq& Pr(e_D(O_v, Z_v) < 0.9 \mathbb{E}(e_D(O_v, Z_v)))\\ 
&<& \binom{n}{|O_v|} \binom{n}{|Z_v|} e^{- c |O_v| |Z_v| p} < 4^n e^{- c' n^2 p^2} = o(1) \,,
\end{eqnarray*}
where $c$ and $c'$ are appropriate constants and the last equality holds by the assumed lower bound on $p$.

It follows that a.a.s. there exists a vertex $u \in O_v$ such that $\deg_D^+(u, Z_v) \geq 0.51 n p$. Since $E_{D'}(O_v, Z_v) = \emptyset$, it follows that, in order to obtain $D'$ from $D$, we have deleted more than $\deg_D^+(u)/2$ edges incident with $u$. This is clearly a contradiction.

Since $r \geq (1 - 2\sqrt{\varepsilon}) k$, it follows that $|V \setminus \bigcup_{i=1}^r V_i| \leq \varepsilon n + 2\sqrt{\varepsilon} n$. We thus conclude that there exists an index $1 \leq j_2 \leq r$, such that $|V_{j_2} \cap \bar{O}_v| \geq \ell/3$ as claimed.
\end{proof}

Next we remark that, since $B \cup T_1 \subseteq L_1 \cup A_0$, since $\lambda \ell p \leq (1- \varepsilon')(1 - \varepsilon) \delta \varepsilon' \ell$, since $v_0$ is nice, and since we end every standard forward step at a nice vertex, the existence of a standard forward step (line 10) follows from Lemma~\ref{lem::stepsExist} provided that $|V_i \cap (L_1 \cup A_0)| \leq (1- \varepsilon') \ell$ holds for every $1 \leq i \leq r$. Similarly, since $v_0$ is backwards nice, and since we end every standard backward step at a backwards nice vertex, the existence of a standard backward step (line 2) follows from Lemma~\ref{lem::stepsExist} under the same conditions. Finally, the existence of a big step (line 12) follows from Lemma~\ref{lem::bigstepExists} provided that $|V_i \cap (L_1 \cup A_0)| \leq \ell/4$ holds for every $1 \leq i \leq r$ and that the conditions of Lemma~\ref{lem::OutNeighbours} are satisfied.  

Therefore, in order to complete the proof of correctness of the subroutine ADD, it suffices to prove that all the conditions mentioned in the previous paragraph hold throughout Stage 1.

\begin{lemma} \label{lem::additionalConditions}
As long as $(B \cup T_2) \setminus P_1 \neq \emptyset$ or $Dg \neq \emptyset$ the following three conditions hold.
\begin{description}
\item [(a)] $|V_s \cap P_1| \leq 20 \rho \ell$ for every $1 \leq s \leq r$;
\item [(b)] $\deg_{D'}(u, V_s \cap L_1) \leq 110 \rho \ell p$ for every $u \in V \setminus P_1$ and every $1 \leq s \leq r$;
\item [(c)] $\deg_{D'}(u, (V \setminus \bigcup_{i=1}^r V_i) \cap L_1) \leq n p/10$ for every $u \in V \setminus P_1$.
\end{description}
\end{lemma}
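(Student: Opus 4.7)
The plan is to establish the three invariants (a), (b), (c) simultaneously by induction on the iteration count $t$ of the while loop in Algorithm~1, showing that they are preserved at each step throughout Stage~1. For the base case ($t = 0$, just after initialization), $|P_1| = 1$ gives (a) trivially. For (b) and (c), the initial $L_1 = B \cup T_1 \cup T_2 \cup \{v_0\}$ satisfies $|V_s \cap L_1| \leq (4 + o(1))\rho\ell$ (combining the bound on $|B|$ from Lemma~\ref{badSet}, the bound $|V_s \cap T_1| \leq 2\rho\ell$ from Lemma~\ref{badConfigurationsI}, and the bound $|T_2| \leq \rho\ell$ from Lemma~\ref{badConfigurationsII}), and $|(V \setminus \bigcup V_i) \cap L_1| \leq \varepsilon \xi n/2 + O(\rho\ell)$. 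The degree bounds (b) and (c) then follow from Chernoff-type bounds on $\deg_D(u, A)$ for such small $A$ (in the style of Lemma~\ref{lem::degreeTooHigh}), using the hypothesis $p \gg \log n/\sqrt n$ which ensures $\rho\ell p \gg \log n$.

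For the inductive step, the central observation is that a single call to ADD introduces at most $r + 8$ new vertices into $P_1$: one from the optional backward step, at most $r$ from the standard forward walk (which advances around the cycle less than once before aligning with $V_{j_1-2}$), and six from the big step. Moreover, each individual slot $V_s$ gains at most a constant $\kappa$ (say $\kappa = 10$) new vertices per iteration. Consequently, both $|V_s \cap P_1|$ and $\deg_{D'}(u, V_s \cap L_1)$ grow by at most $\kappa$ per iteration (the degree grows by at most one per newly added vertex). Because $\ell p \to \infty$, we have $\kappa \ll \rho\ell p$, so the $10\rho\ell p$ gap between the dangerous threshold $100\rho\ell p$ and the target bound $110\rho\ell p$ in (b) dwarfs the per-iteration jump. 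Thus once a vertex $u \in V \setminus P_1$ crosses the threshold $100 \rho \ell p$, it enters $Dg$; since the algorithm processes $Dg$ before $(B \cup T_2) \setminus P_1$, the vertex is absorbed into $P_1$ before its degree can drift above $110 \rho \ell p$. An analogous argument, with thresholds $np/20$ and $np/10$, preserves~(c).

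To keep (a) intact we must bound the total iteration count by roughly $\rho\ell$. The iterations processing vertices of $(B \cup T_2) \setminus P_1$ number at most $|B| + |T_2| \leq (1 + o(1))\rho\ell$. The iterations processing dangerous vertices are bounded via double counting: a vertex $v$ can ever be dangerous only if $\deg_{D'}(v, V_i \cap L_1^{\mathrm{final}}) \geq 100\rho\ell p$ for some $i$ (or the analogous exceptional-set condition), and the total in-degree into $L_1^{\mathrm{final}}$ is at most $|L_1^{\mathrm{final}}| \cdot (1 + o(1))np$. Combined with a bootstrapped bound $|L_1^{\mathrm{final}}| = O(\rho n)$, this gives $N = O(\rho\ell)$, so $|V_s \cap P_1| \leq \kappa N \leq 20\rho\ell$, verifying~(a) with room to spare.

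The main obstacle is closing the self-referential loop between $|L_1|$ and $N$: the number of dangerous vertices processed depends on the final size of $L_1$, which in turn grows with $N$. Resolving this requires exploiting the parameter hierarchy $\lambda \ll \rho \ll \varepsilon' \ll \xi \ll \alpha$ chosen at the start of the proof of the lower bound, together with the fact that random graph properties of $D$ (in particular Lemma~\ref{lem::degreeTooHigh} and $(\eta, L, p)$-boundedness) produce concentration on the scale $\rho\ell p$, which comfortably dominates the $\kappa$-per-iteration increments so that the bootstrap closes.
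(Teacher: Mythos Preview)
Your overall framework (track the algorithm, use that each call to ADD adds a bounded number of vertices per cluster, and exploit the priority given to $Dg$) matches the paper, but the two crucial counting steps both have real gaps.

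For (b) and (c) you argue that once $u$ crosses the $100\rho\ell p$ threshold it is absorbed ``before its degree can drift above $110\rho\ell p$''. This would be correct if $u$ were the \emph{only} dangerous vertex, but many vertices can become dangerous simultaneously or in quick succession. The algorithm processes $Dg$ one vertex at a time, and each call to ADD can push still more vertices into $Dg$. While $u$ waits in the queue, every call to ADD can add up to $\kappa$ new vertices to $V_s\cap L_1$, so $u$'s degree can grow by $\kappa$ times the number of dangerous vertices processed before it. You never bound this queue length, and your later ``bootstrap'' does not supply such a bound either.

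Your double-counting for (a) is too coarse to close the loop. A post-dangerous vertex is only guaranteed to have $\deg_{D'}(v,L_1)\ge 100\rho\ell p$, and with $|L_1^{\mathrm{final}}|=O(\rho n)$ the total degree into $L_1^{\mathrm{final}}$ is $O(\rho n\cdot np)$. Dividing gives only $N=O(n^2/\ell)=O(nk)$ dangerous iterations, not $O(\rho\ell)$; the target bound $|V_s\cap P_1|\le \kappa N\le 20\rho\ell$ is off by a factor of roughly $k^2/\rho$. The problem is that the threshold $100\rho\ell p$ is a \emph{local} condition (high degree into \emph{one} cluster), so comparing it to the \emph{global} degree into $L_1$ wastes a factor of $r$.

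The paper fixes both issues with one extra idea: if $|Pd|$ is large, pigeonhole over the $r+1$ ``reasons'' for being dangerous to find a set $F\subseteq Pd$ of size $\Theta(|Pd|/r)$ all of which have $\deg_{D'}(\cdot,V_j\cap L_1)\ge 100\rho\ell p$ for the \emph{same} index $j$ (or all have high degree into the exceptional part). Now count arcs between $F$ and $V_j\cap L_1$: on one hand this is at least $|F|\cdot 100\rho\ell p$; on the other, since (a) has not yet failed, $|V_j\cap L_1|\le 25\rho\ell$, and a Chernoff bound in the random digraph $D$ forces at most $\approx 3|F|\cdot 25\rho\ell\cdot p = 75|F|\rho\ell p$ such arcs. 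The resulting contradiction bounds $|Pd|$ directly, without any bootstrap. The same argument with $|F|=\Theta(\rho\ell p/r)$ handles (b) and (c). This pigeonhole-then-concentrate step is the missing ingredient in your sketch.
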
 

Before we prove Lemma~\ref{lem::additionalConditions}, we remark that it suffices to complete the proof of correctness of the subroutine ADD. Indeed, it follows by Condition (a) and~\eqref{eq::smallL1} that $|V_i \cap (L_1 \cup A_0)| \leq \ell/4 \leq (1 - \varepsilon') \ell$ holds for every $1 \leq i \leq r$. Moreover, since $|V \setminus \bigcup_{i=1}^r V_i| \leq \varepsilon n + 2\sqrt{\varepsilon} k \ell \leq \varepsilon n + 2\sqrt{\varepsilon} n$ holds by Lemma~\ref{lem::longCycle}, it follows by Condition (a) that $|P_1| \leq n/100$. Finally, it follows by Conditions (b) and (c) that for every $u \in V \setminus P_1$
$$
\deg_{D'}(u, L_1) = \deg_{D'}(u, (V \setminus \bigcup_{i=1}^r V_i) \cap L_1) + \sum_{i=1}^r \deg_{D'}(u, V_s \cap L_1) \leq np/10 + 110 r \rho \ell p \leq n p/5 \,,
$$  
so both conditions of Lemma~\ref{lem::OutNeighbours} are met.

\textbf{Proof of Lemma~\ref{lem::additionalConditions}}
Suppose for a contradiction that at least one of (a), (b) and (c) is violated at some point during Stage 1, that is, while $(B \cup T_2) \setminus P_1 \neq \emptyset$ or $Dg \neq \emptyset$ still holds. Consider the first moment in which this occurs. We will distinguish between three cases according to which condition is violated first. Before doing so, we will prove that whenever we run the subroutine ADD, we add to $P_1$ only a few vertices from each $V_i$.

\begin{claim} \label{cl::atMost8}
For every $1 \leq i \leq r$, each single call to the subroutine ADD enlarges $|P_1 \cap V_i|$ by at most 8 and $|P_1 \cap (V \setminus \bigcup_{i=1}^r V_i)|$ by at most 3.
\end{claim}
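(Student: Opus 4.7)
The plan is to read off, directly from the subroutine and from the step-type definitions, which class $V_i$ (or $V_0 := V \setminus \bigcup_{i=1}^r V_i$) each newly inserted vertex lies in, and then sum. A single call to ADD performs at most three things: (i) the optional standard backward step on line~2, which adds the new $v_0$ to $V_{i(v_0)-1}$ (a single vertex in a single class); (ii) the while loop of standard forward steps on line~10, each iteration of which extends the forward end of the path by one nice vertex in the next class $V_{i(x)+1}$, and which terminates as soon as $i(x)+2=j_1$, so it makes at most $r-1$ iterations and contributes at most one vertex to any given $V_i$; and (iii) one big step on line~12, which by Definition~\ref{def::bigStep} introduces exactly the six new vertices $y_1, y_2, y_3, v, y_4, y_5$.

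The crux is then to bound the big step's contribution to a single $V_i$. From the construction in Lemma~\ref{lem::bigstepExists}, the classes of the forced vertices are $y_1 \in V_{s+1}$, $y_2 \in V_{s+2}$ (where $s = i(x)$ at the start of the big step) and $y_5 \in V_{j_2}$ (with $j_2$ chosen on line~8 of ADD); the three vertices $y_3, v, y_4$ carry no class constraint and may lie in any $V_i$ or in $V_0$. Since $r$ is large we have $V_{s+1} \neq V_{s+2}$, so for any fixed $V_i$ at most one of $y_1, y_2$ can lie in $V_i$; adding the possibility that $y_5 \in V_i$ and the three unconstrained vertices gives a contribution of at most $1+1+3 = 5$ to any single $V_i$, and at most $3$ (only $y_3, v, y_4$) to $V_0$.

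Combining the three sources yields, for any $V_i$, an enlargement of at most $1 + 1 + 5 = 7 \le 8$, and for $V_0$, an enlargement of at most $0 + 0 + 3 = 3$, as claimed. I do not anticipate any genuine obstacle here: the whole argument is bookkeeping, reading off from the definitions of the standard and big steps which class each new vertex is forced to lie in. The slack between our derived bound $7$ and the stated bound $8$ is a comfortable safety margin which will be convenient in the subsequent cumulative estimates of Section~\ref{subsec::stage1}.
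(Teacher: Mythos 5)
Your proof is correct and follows essentially the same decomposition as the paper's (one optional backward step, a run of forward steps each in a fresh class, and one big step). The only difference is that you track the class constraints on $y_1$, $y_2$, $y_5$ to get $5$ per class for the big step, whereas the paper simply uses the crude bound $|V_i \cap \{y_1,y_2,y_3,v,y_4,y_5\}| \le 6$; both give totals within the stated bound of $8$.
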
   

\begin{proof}
Consider running $ADD(v; x, v_0, A_0)$ once. First, note that by the proof of Lemma~\ref{lem::bigstepExists}, the only vertices of $V \setminus \bigcup_{i=1}^r V_i$ we might add to $P_1$ are $v$ itself, $y_3 \in I_v$ and $y_4 \in O_v$. Next, fix some $1 \leq i \leq r$. We might add 1 vertex of $V_i$ to $P_1$ if we start with a standard backward step from $V_{i+1}$ (this is the new $v_0$). Then, starting at $V_{i(x)}$, we make a series of standard forward steps until we reach $V_{j_1-2}$. This adds to $P_1$ at most one additional vertex of $V_i$. Once we reach $V_{j_1-2}$ we make a big step consisting of the arcs $(x, y_1), (y_1, y_2), (y_2, y_3), (y_3, v), (v, y_4)$ and $(y_4, y_5)$. The claim now follows since clearly $|V_i \cap \{y_1, y_2, y_3, v, y_4, y_5\}| \leq 6$.    
\end{proof}  

We can now return to the main part of the proof. We will make use of the following notation and terminology. A vertex $w \in P_1$ is called \emph{post dangerous} if it was dangerous before it was added to $P_1$. Note that since we do not remove any vertices from $L_1$ when building $P_1$, after adding $w$ to $P_1$, it is still true that $\deg_{D'}(w, (V \setminus \bigcup_{i=1}^r V_i) \cap L_1) \geq n p/20$ or there exists some $1 \leq i \leq r$ such that $\deg_{D'}(w, V_i \cap L_1) \geq 100 \rho \ell p$. A vertex $w \in P_1$ is called \emph{special} if it was added to $P_1$ when the subroutine ADD was called with $v \in Dg$. At any point during Stage 1, we denote by $Pd = Pd(P_1)$ the set of post dangerous vertices and by $Sp = Sp(P_1)$ the set of special vertices. Note that at any point during Stage 1, $Pd \subseteq Sp \subseteq P_1$ and $Dg \cap Pd = \emptyset$ as, by Definition~\ref{def::dangerousVertex}, once a vertex is added to $P_1$, it is no longer dangerous. Moreover 
\begin{equation} \label{eq::PdSp1}
|Pd| \geq |(V \setminus \bigcup_{i=1}^r V_i) \cap Sp|/3
\end{equation} 
and 
\begin{equation} \label{eq::PdSp2}
|Pd| \geq |V_i \cap Sp|/8 \textrm{ for every } 1 \leq i \leq r \,.    
\end{equation}  
hold by Claim~\ref{cl::atMost8}.

Similarly, it follows from Claim~\ref{cl::atMost8} and the aforementioned bounds on $|B|$ and $|T_2|$ that
\begin{equation} \label{eq::P1minusSp}
|(P_1 \cap V_i) \setminus Sp| \leq 8 |B \cup T_2| \leq 10 \rho \ell \textrm{ for every } 1 \leq i \leq r \,.    
\end{equation}

Starting with Condition (a), consider the moment it is violated for the first time. It follows by Claim~\ref{cl::atMost8} that, at this point, $|V_s \cap P_1| > 20 \rho \ell$ holds for some $1 \leq s \leq r$ and $|V_i \cap P_1| \leq 20 \rho \ell + 8$ holds for every $1 \leq i \leq r$. It follows by~\eqref{eq::P1minusSp} that $|V_s \cap Sp| \geq 10 \rho \ell$ and thus $|Pd| \geq |V_s \cap Sp|/8 \geq \rho \ell$ by~\eqref{eq::PdSp2}. For every $w \in Pd$, it follows by the definitions of $Dg$ and $Pd$ that $\deg_{D'}(w, (V \setminus \bigcup_{i=1}^r V_i) \cap L_1) \geq n p/20$ or there is some $1 \leq j \leq r$ for which $\deg_{D'}(w, V_j \cap L_1) \geq 100 \rho \ell p$. Assume first that there exists a set $F_1 \subseteq Pd$ of size $|F_1| = (r+1)^{-1} \rho \ell$ such that $\deg_{D'}(w, (V \setminus \bigcup_{i=1}^r V_i) \cap L_1) \geq n p/20$ holds for every $w \in F_1$. It follows by Theorem~\ref{th::Chernoff} (ii) that a.a.s. 
\begin{equation} \label{eq::arcsF1}
e_D(F_1) \leq 4 \binom{|F_1|}{2} p \leq 2 (r+1)^{-2} \rho^2 \ell^2 p
\end{equation} 
and the number of arcs of $D'$ with one endpoint in $F_1$ and the other in $((V \setminus \bigcup_{i=1}^r V_i) \cap L_1) \setminus F_1$ is at most
\begin{equation} \label{eq::arcsF1small}
4 |F_1| |(V \setminus \bigcup_{i=1}^r V_i) \cap L_1| p \leq 4 \cdot (r+1)^{-1} \rho \ell \cdot (\varepsilon n + 2\sqrt{\varepsilon} n) \cdot p \leq (r+1)^{-1} \rho \ell n p/100 \,. 
\end{equation}
On the other hand, it follows by~\eqref{eq::arcsF1} and the definition of $F_1$ that the number of arcs of $D'$ with one endpoint in $F_1$ and the other in $((V \setminus \bigcup_{i=1}^r V_i) \cap L_1) \setminus F_1$ is at least 
\begin{equation} \label{eq::arcsF1large}
|F_1| \cdot n p/20 - 2e_{D'}(F_1) \geq (r+1)^{-1} \rho \ell n p/20 - 4 (r+1)^{-2} \rho^2 \ell^2 p > (r+1)^{-1} \rho \ell n p/100
\end{equation}
contrary to~\eqref{eq::arcsF1small}. 

Assume then that no such set $F_1$ exists. It follows that there exist some $1 \leq j \leq r$ and a set $F_2 \subseteq Pd$ of size $|F_2| = (r+1)^{-1} \rho \ell$ such that $\deg_{D'}(w, V_j \cap L_1) \geq 100 \rho \ell p$ holds for every $w \in F_2$. It follows by 
Theorem~\ref{th::Chernoff} (ii) that a.a.s. 
\begin{equation} \label{eq::arcsF2}
e_D(F_2) \leq 4 \binom{|F_2|}{2} p \leq 2 (r+1)^{-2} \rho^2 \ell^2 p
\end{equation}
and the number of arcs of $D'$ with one endpoint in $F_2$ and the other in $(V_j \cap L_1) \setminus F_2$ is at most
\begin{equation} \label{eq::arcsF2small}
3 |F_2| |V_j \cap L_1| p \leq 3 \cdot (r+1)^{-1} \rho \ell \cdot 25 \rho \ell \cdot p = 75 (r+1)^{-1} \rho^2 \ell^2 p \,, 
\end{equation}
where the inequality in~\eqref{eq::arcsF2small} holds by~\eqref{eq::smallL1} and since we consider the moment in which Condition (a) is violated for the first time.
 
On the other hand, it follows by~\eqref{eq::arcsF2} and the definition of $F_2$ that the number of arcs of $D'$ with one endpoint in $F_2$ and the other in $(V_j \cap L_1) \setminus F_2$ is at least
\begin{equation} \label{eq::arcsF2large}
|F_2| \cdot 100 \rho \ell p - 2e_{D'}(F) \geq 100 (r+1)^{-1} \rho^2 \ell^2 p - 4 (r+1)^{-2} \rho^2 \ell^2 p > 75 (r+1)^{-1} \rho^2 \ell^2 p
\end{equation}
contrary to~\eqref{eq::arcsF2small}.

Next, assume that Condition (b) is the first to be violated; let $u \in V \setminus P_1$ and $1 \leq i \leq r$ be such that $\deg_{D'}(u, V_i \cap L_1) > 110 \rho \ell p$. Consider the moment at which $\deg_{D'}(u, V_i \cap L_1) \geq 100 \rho \ell p$ was first satisfied, that is, the moment in which $u$ first became dangerous. Since we did not add $u$ to $P_1$ even though it became dangerous, it follows that from this moment until the time $\deg_{D'}(u, V_i \cap L_1) > 110 \rho \ell p$ first occurred we added to $P_1$ only special vertices (see lines 5-6 of Algorithm 1). In particular, we added at least $10 \rho \ell p$ special vertices to $V_i \cap L_1$. Since, 
$|Pd| \geq |V_i \cap Sp|/8$ by~\eqref{eq::PdSp2}, it follows that $|Pd| \geq \rho \ell p$. Assume first that there exists a set $F'_1 \subseteq Pd$ of size $|F'_1| = (r+1)^{-1} \rho \ell p$ such that $\deg_{D'}(w, (V \setminus \bigcup_{i=1}^r V_i) \cap L_1) \geq n p/20$ holds for every $w \in F'_1$. Similar calculations to the ones in~\eqref{eq::arcsF1},~\eqref{eq::arcsF1small} and~\eqref{eq::arcsF1large} show that a.a.s. the number of arcs of $D'$ with one endpoint in $F'_1$ and the other in $((V \setminus \bigcup_{i=1}^r V_i) \cap L_1) \setminus F'_1$ is on the one hand at most $(r+1)^{-1} \rho \ell n p^2/100$ and on the other hand strictly larger than $(r+1)^{-1} \rho \ell n p^2/100$. This is clearly a contradiction.

Assume then that no such set $F'_1$ exists. It follows that there exist some $1 \leq j \leq r$ and a set $F'_2 \subseteq Pd$ of size $|F'_2| = (r+1)^{-1} \rho \ell p$ such that $\deg_{D'}(w, V_j \cap L_1) \geq 100 \rho \ell p$ holds for every $w \in F'_2$. Similar calculations to the ones in~\eqref{eq::arcsF2},~\eqref{eq::arcsF2small} and~\eqref{eq::arcsF2large} show that a.a.s. the number of arcs of $D'$ with one endpoint in $F'_1$ and the other in $(V_j \cap L_1) \setminus F'_1$ is on the one hand at most $75 (r+1)^{-1} \rho^2 \ell^2 p^2$ and on the other hand strictly larger than $75 (r+1)^{-1} \rho^2 \ell^2 p^2$. This is clearly a contradiction.

The proof that Condition (c) is not violated as long as $(B \cup T_2) \setminus P_1 \neq \emptyset$ or $Dg \neq \emptyset$ is essentially the same as the proof for (b); we omit the straightforward details.
{\hfill $\Box$ \medskip\\} 

While building $P_1$ we may have performed several backward steps. Since it would be convenient later on to assume that $P_1$ starts at $V_1$, at the end of Stage 1 we cyclically shift the labels of $V_1, \ldots, V_r$ such that $v_0 \in V_1$ holds again. We conclude this subsection with a summary of what we have proved; this will be convenient in the next subsection.
 
\begin{proposition} \label{prop::summary}
By the end of Stage 1 we have built a directed path $P_1$ from $v_0 \in V_1$ to $x \in V_s \setminus (A_0 \cup B \cup T_1 \cup T_2)$ for some $1 \leq s \leq r$ which satisfies all of the following properties:
\begin{enumerate}
\item $B \cup T_2 \subseteq P_1$.
\item There exists a set $A_0 \subseteq N_{D'}^-(v_0, V_r \setminus (B \cup T_1 \cup T_2 \cup P_1))$ of size $\lambda \ell p$.
\item $|V_i \cap P_1| \leq 20 \rho \ell$ for every $1 \leq i \leq r$.
\item $\deg_{D'}(u, V_i \cap (L_1 \cup A_0)) \leq 110 \rho \ell p + \lambda \ell p$ for every $u \in V \setminus P_1$ and every $1 \leq i \leq r$.
\item $\deg_{D'}(u, (V \setminus \bigcup_{i=1}^r V_i) \cap L_1) \leq n p/10$ for every $u \in V \setminus P_1$.
\item $x$ is nice with respect to $L_1 \cup A_0$.
\end{enumerate}
\end{proposition}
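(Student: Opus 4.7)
The plan is to read off each of the six properties from the invariants maintained throughout Stage~1 together with Lemma~\ref{lem::additionalConditions}. Property~1 is simply the loop exit condition of Algorithm~1: the main while loop halts only when both $(B\cup T_2)\setminus P_1=\emptyset$ and $Dg=\emptyset$, so in particular $B\cup T_2\subseteq P_1$. Termination itself is automatic, since each call to ADD strictly enlarges $P_1$ (it absorbs at least the vertex $v$) while Lemma~\ref{lem::additionalConditions}(a) caps $|V_i\cap P_1|$ at $20\rho\ell$ and Claim~\ref{cl::atMost8} caps the per-call growth of $|P_1\setminus\bigcup_i V_i|$. Property~3 is a direct restatement of Lemma~\ref{lem::additionalConditions}(a) at termination, and Property~5 is a direct restatement of Lemma~\ref{lem::additionalConditions}(c).

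For Property~2, I would show the invariant ``$A_0\subseteq N^-_{D'}(v_0,V_{i(v_0)-1}\setminus L_1)$ and $|A_0|=\lambda\ell p$'' holds throughout Stage~1. It is true at initialization: the very nice vertex $v_0\in V_1$ satisfies $\deg^-_{D'}(v_0,V_r\setminus L_1)\ge\lambda\ell p$ by the definition of backwards nice (Definition~\ref{def::niceVertex}), which is in turn justified by the same pigeonhole computation used just after the statement of Algorithm~1 to exhibit $v_0$ in the first place. Whenever ADD refreshes $v_0$ by a standard backward step, the new $v_0$ is backwards nice by Lemma~\ref{lem::stepsExist}, so the reset of $A_0$ to an arbitrary subset of $N^-_{D'}(v_0,V_{i(v_0)-1}\setminus L_1)$ of size $\lambda\ell p$ is legitimate and preserves the invariant. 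After the cyclic relabeling performed at the end of Stage~1 that places $v_0$ back into $V_1$, the invariant becomes Property~2.

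For Property~4, write $V_i\cap(L_1\cup A_0)=(V_i\cap L_1)\cup(V_i\cap A_0)$. The contribution from $L_1$ is bounded by $110\rho\ell p$ by Lemma~\ref{lem::additionalConditions}(b). The set $A_0$ is contained in $V_r$ (after relabeling), so $V_i\cap A_0=\emptyset$ for $i\ne r$ and $|V_r\cap A_0|=\lambda\ell p$; hence $\deg_{D'}(u,V_i\cap A_0)\le\lambda\ell p$, and the two bounds combine. Property~6 follows from the construction of $x$: each call to ADD returns as its updated $x$ the vertex $y_5$ produced by the concluding big step, and Lemma~\ref{lem::bigstepExists} guarantees that $y_5$ is nice with respect to $L_1\cup A_0$ and lies in $V\setminus(L_1\cup A_0)$. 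Since $B\cup T_1\cup T_2\subseteq L_1$, this gives $x\in V_s\setminus(A_0\cup B\cup T_1\cup T_2)$ for the appropriate $s$.

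The main obstacle is not in any individual property but in ensuring that every hypothesis needed by the subroutines invoked inside ADD (Lemmas~\ref{lem::stepsExist}, \ref{lem::OutNeighbours}, and~\ref{lem::bigstepExists}) continues to hold throughout the run. This is precisely the role of Lemma~\ref{lem::additionalConditions} and the observation immediately following it, which combines Conditions (a)--(c) with the bound $|V\setminus\bigcup_i V_i|\le(\varepsilon+2\sqrt\varepsilon)n$ from Lemma~\ref{lem::longCycle} to deduce $|V_i\cap(L_1\cup A_0)|\le\ell/4$ and $\deg_{D'}(u,L_1)\le np/5$ for all $u\in V\setminus P_1$. Once those preparatory facts are in hand, Proposition~\ref{prop::summary} is a matter of collecting the end-of-stage state of Algorithm~1 into a single statement.
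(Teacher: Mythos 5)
Your proposal is correct and follows the paper's implicit argument: the paper presents Proposition~\ref{prop::summary} as a summary of the preceding Stage~1 analysis rather than as a separately proved claim, and your derivation of each of the six properties from the loop exit condition of Algorithm~1, the invariant maintained on $(v_0,A_0)$ by standard backward steps, Lemma~\ref{lem::additionalConditions}, and Lemma~\ref{lem::bigstepExists} reconstructs exactly that argument.
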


\subsection{Stage 2: Extending the path to an almost spanning one} \label{subsec::stage2}

In this subsection we extend $P_1$ to an almost spanning path of $D'$ which satisfies certain desirable properties. Throughout this stage we denote the current path by $P_2$ and let $L_2 = A_0 \cup T_1 \cup P_2$. Initially $P_2 = P_1$. 

\begin{algorithm}
\caption{Extend $P_1$ to an almost spanning path $P_2$}
\begin{algorithmic} 
\STATE $x \leftarrow$ last vertex added to $P_1$ in Stage 1
\WHILE{$|V_i \setminus L_2| > 3 \varepsilon' \ell$ for every $1 \leq i \leq r$}
\STATE $x \leftarrow$ random forward step from $x$ with respect to $L_2$
\ENDWHILE
\end{algorithmic}
\end{algorithm}

\bigskip

The correctness of Algorithm 3 follows immediately from Lemma~\ref{lem::stepsExist} and since the last vertex added to $P_1$ in Stage 1 was nice and was not in $T_1$.

The remainder of this subsection is dedicated to the proof of the following lemma which will play a crucial role in Stage 4.

\begin{lemma} \label{goodArcs}
Asymptotically almost surely, at the end of Stage 2, $|\{(x,y) \in E(P_2) : (x,u) \in E(D') \text{ and } (u,y) \in E(D')\}| \geq 10^{-10} \alpha^4 \xi^3 p^2 n$ holds for every $u \in V \setminus P_2$.
\end{lemma}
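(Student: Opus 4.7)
The plan is to fix $u\in V\setminus P_2$, note that $u\notin B\cup T_2$ (since both sets were forced into $P_1\subseteq P_2$), observe that $u$ therefore lies in the set $U$ of Lemma~\ref{lem::manyGoodPairs}, and invoke that lemma to obtain an index set $I_u\subseteq[r]$ with $|I_u|\ge\alpha r/40$, pairwise gaps at least $5$, such that for every $i\in I_u$ the vertex $u$ has at least $\alpha\ell p/2$ in-neighbors in $V_i$, at least $\alpha\ell p/2$ out-neighbors in $V_{i+1}$, and is not $i$-bad of type~II. For every such $i$ and every pass of the random walk through the consecutive blocks $V_{i-3},V_{i-2},V_{i-1},V_i,V_{i+1}$ during Stage~2, I declare a \emph{trial} which \emph{succeeds} if the endpoints $z'\in V_i$ and $z''\in V_{i+1}$ of the corresponding last two random forward steps lie in $N^-_{D'}(u)$ and $N^+_{D'}(u)$ respectively; then $(z',z'')$ is an arc of $P_2$ contributing to the count in Lemma~\ref{goodArcs}. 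The gap condition on $I_u$ together with the fact that consecutive revolutions of the cycle are separated by $r$ steps ensures that distinct trials use disjoint batches of four random forward steps, so their outcomes are conditionally independent given the past.

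To bound the success probability of a single trial at index $i$, I would apply Lemma~\ref{lem::fourSteps} with $s=i-3$, $Z_1=N^-_{D'}(u,V_i\setminus L_2)$ and $Z_2=N^+_{D'}(u,V_{i+1}\setminus L_2)$. The size lower bounds $|Z_1|,|Z_2|\ge\alpha\ell p/4$ throughout Stage~2 follow by combining property~(iii) of Lemma~\ref{lem::manyGoodPairs}, the inequality $\deg_{D'}(u,V_j\cap(L_1\cup A_0))\le 111\rho\ell p$ from Proposition~\ref{prop::summary}(4), and a separate random-walk argument using Lemma~\ref{lem::twoStepsUpperBound} to control the (random) number of $u$'s neighbors that get absorbed into $P_2\setminus P_1$. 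The regularity preconditions (ii)--(v) of Lemma~\ref{lem::fourSteps} are unpacked from the definition of ``not $i$-bad of type~II'': condition~(I.2) applied to $(V_{i-1},V_i)$ and condition~(I.1) applied to $(V_{i+1},V_{i+2})$ yield regular subsets $\tilde Q_i\subseteq Z_1$ and $\tilde Q_{i+1}\subseteq Z_2$ on the correct sides, while condition~(iv) of $i$-bad of type~II supplies the joint regularity of $(\tilde Q_i,\tilde Q_{i+1})$. This gives a conditional success probability of at least $|Z_1||Z_2|/(2\ell^2)\ge\alpha^2 p^2/32$.

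Since Stage~2 only halts once each $V_j$ has been almost exhausted, the random walk makes $(1-o(1))\ell$ full passes around the cycle, so for each $i\in I_u$ the number of trials is at least $(1-o(1))\ell$, and the total over $I_u$ is at least $\alpha r\ell/50$. The expected number of successes for $u$ is thus $\Omega(\alpha^3 np^2)$, comfortably exceeding $10^{-10}\alpha^4\xi^3 p^2 n$. A standard martingale Chernoff bound (the trials stochastically dominate a sum of independent Bernoullis with success probability $\ge\alpha^2 p^2/32$) then shows that the number of successes is at least half its mean with probability $1-\exp(-\Omega(\alpha^3 np^2))$; because $p\gg\log n/\sqrt n$ gives $np^2\gg\log^2 n$, a union bound over the at most $n$ vertices $u\in V\setminus P_2$ finishes the argument.

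The hard part will be verifying precondition~(iii) of Lemma~\ref{lem::fourSteps}, namely that $(\tilde Q_i,V_{i+1})$ is $(\varepsilon')$-regular with the right density. The ``not bad of type~I with respect to $(V_i,V_{i+1})$'' clause, as formulated in~(I.1), provides such regularity only for subsets of $N^+_{D'}(u,V_i)$, whereas $\tilde Q_i$ is a subset of $N^-_{D'}(u,V_i)$. This is precisely what motivates the asymmetric shape of the type~II definition, and one has to weave together condition~(iv) of $i$-bad of type~II, the available size bounds, and Lemma~\ref{smallDegreeVertices} to extract a common sub-subset of $\tilde Q_i$ enjoying all three of the needed regularity properties simultaneously. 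Tracking these repeated sub-subsets while keeping every constant under control is what forces the hierarchy $\lambda\ll\rho\ll\varepsilon'\ll\xi\ll\alpha$ fixed at the start of Section~\ref{sec::proofMain}.
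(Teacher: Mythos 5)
Your overall architecture matches the paper's: fix $u\in V\setminus P_2$, invoke Lemma~\ref{lem::manyGoodPairs} via Proposition~\ref{prop::summary} to get $I_u$, treat traversals of blocks indexed by $I_u$ as independent trials, lower-bound each trial's success probability via Lemma~\ref{lem::fourSteps}, and conclude by Chernoff plus a union bound. However, there is a genuine gap in the step where you assert ``$|Z_1|,|Z_2|\ge\alpha\ell p/4$ throughout Stage~2''. By the end of Stage~2, $|V_i\setminus L_2|\le 4\varepsilon'\ell$ for every $i$, so the walk has absorbed all but an $O(\varepsilon')$-fraction of $V_i$. Since $u\notin B$, $u$ has only about $\ell p$ in-neighbours in $V_i$, and there is nothing preventing essentially the same fraction of them from being swallowed by $P_2\setminus P_1$; at the end of Stage~2 one can only guarantee $|N^-_{D'}(u,V_i\setminus L_2)|=O(\varepsilon'\ell p)$, which is far below $\alpha\ell p/4$ because $\varepsilon'\ll\alpha$. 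Indeed even Lemma~\ref{lem::threeStepsUpperBound} gives hit probability $O(\xi^{-3}p)$ per pass through $V_i$, and over the $(1-o(1))\ell$ passes you invoke, the expected number of neighbours eaten is $\Theta(\xi^{-3}\ell p)\gg\alpha\ell p$. So neither your invocation of Lemma~\ref{lem::twoStepsUpperBound} nor the three-step variant can salvage the lower bound on $|Z_1|,|Z_2|$ over the whole of Stage~2.

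The paper avoids this by introducing a stopping time: it analyses only the prefix of Stage~2 up to the first moment $|P_2\setminus P_1|\ge\alpha\xi^3 n/10^5$, and proves (Claim~\ref{cl::largeRemainingNeighbourhood}, via Lemma~\ref{lem::threeStepsUpperBound} and the stochastic domination by a binomial) that during this initial phase a.a.s.\ at most $\alpha\ell p/4$ of $u$'s relevant neighbours are absorbed, so $\deg^-_{D'}(u,V_i\setminus L_2),\deg^+_{D'}(u,V_{i+1}\setminus L_2)\ge\alpha\ell p/12$ persist through that prefix. Restricting to these $\Theta(\alpha\xi^3\ell/r)$ passes per index of $I_u$ is also what produces the extra factor $\xi^3$ and the smaller final constant $10^{-10}\alpha^4\xi^3$: your heuristic bound of ``$\Omega(\alpha^3 np^2)$ expected successes, comfortably exceeding $10^{-10}\alpha^4\xi^3p^2n$'' would be much larger precisely because you let the walk run to the end, which is exactly what is illegitimate. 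You should replace the phrase ``throughout Stage~2'' by ``during the first $\alpha\xi^3 n/10^5$ random forward steps of Stage~2'' and prove the corresponding survival claim; the rest of your argument then goes through. Your final remark on the asymmetry in verifying precondition~(iii) of Lemma~\ref{lem::fourSteps} correctly identifies the delicate point in setting up the regular sub-subsets from the definition of ``not $i$-bad of type~II''.
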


\begin{proof}
Fix some $u \in V \setminus P_2$ and let $I_u \subseteq [r]$ be a set satisfying all of the following properties: 
\begin{description}
\item [(i)] $|I_u| = \alpha r/40$.
\item [(ii)] $u \notin \bigcup_{i \in I_u} (V_i \cup V_{i+1})$.
\item [(iii)] At the beginning of Stage 2, $\deg^-_{D'}(u, V_i \setminus L_2) \geq \alpha \ell p/3$ and $\deg^+_{D'}(u, V_{i+1} \setminus L_2) \geq \alpha \ell p/3$ hold for every $i \in I_u$.
\item [(iv)] $(j - i) \mod r \geq 5$ and $(i - j) \mod r \geq 5$ for every $i \neq j \in I_u$.
\item [(v)] $u$ is not $i$-bad of type II for any $i \in I_u$. 
\end{description}
The existence of such a set follows from Lemma~\ref{lem::manyGoodPairs} and from Parts 1 and 4 of Proposition~\ref{prop::summary}. 

Note that, while Properties (i), (ii), (iv) and (v) hold throughout Stage 2, Property (iii) might be violated during the construction of $P_2$. Hence, we first prove that a.a.s., at the moment $|P_2 \setminus P_1| \geq \alpha \xi^3 n/10^5$ first occurs, $I_u$ satisfies a weaker version of this property.
\begin{claim} \label{cl::largeRemainingNeighbourhood}
With probability at least $1 - o(1/n)$, at the moment $|P_2 \setminus P_1| \geq \alpha \xi^3 n/10^5$ first occurs, $\deg^-_{D'}(u, V_i \setminus L_2) \geq \alpha \ell p/12$ and $\deg^+_{D'}(u, V_{i+1} \setminus L_2) \geq \alpha \ell p/12$ hold for every $i \in I_u$. 
\end{claim}

\begin{proof}
Fix an arbitrary $i \in I_u$ and let $N^-_{D'}(u, V_i \setminus L_2) \subseteq A_i \subseteq V_i \setminus L_2$ be an arbitrary set of size $2 \ell p$; such a set exists since $B \subseteq P_1$ and so $u \notin B$. It follows by Lemma~\ref{lem::threeStepsUpperBound} that whenever we perform 3 consecutive random forward steps $(x,y), (y,y')$ and $(y',z)$, where $z \in V_i$, we have $Pr(z \in A_i) \leq 3000 \xi^{-3} p$. Since we only consider the first $\alpha \xi^3 n/10^5$ random forward steps made in Stage 2, we consider at most $1 + \alpha \xi^3 n/(10^5 r)$ vertices of $V_i \cap (P_2 \setminus P_1)$. Let $Y_i \sim Bin(1 + \alpha \xi^3 n/(10^5 r), 3000 \xi^{-3} p)$, then $\mathbb{E}(Y_i) = (1 + \alpha \xi^3 n/(10^5 r)) \cdot 3000 \xi^{-3} p \leq \alpha \ell p/28$. We claim that, at the moment $|P_2 \setminus P_1| \geq \alpha \xi^3 n/10^5$ first occurs, $|A_i \cap (P_2 \setminus P_1)|$ is dominated by $Y_i$, that is, $Pr(|A_i \cap (P_2 \setminus P_1)| \geq K) \leq Pr(Y_i \geq K)$ holds for every $K$. Indeed, whenever we add to $P_2$ an arc $(u,v)$, where $u \in V_{i-1}$ and $v \in V_i$, we can imagine that a coin is tossed with the probability of \emph{success}, that is, the probability that $v \in A_i$, being at most $3000 \xi^{-3} p$. It thus follows by Theorem~\ref{th::Chernoff} (iv) that at the moment $|P_2 \setminus P_1| \geq \alpha \xi^3 n/10^5$ first occurs
\begin{eqnarray*} 
Pr(|N^-_{D'}(u, V_i) \cap (P_2 \setminus P_1)| \geq \alpha \ell p/4) &\leq& Pr(|A_i \cap (P_2 \setminus P_1)| \geq \alpha \ell p/4)\\ 
&\leq& Pr(Y_i \geq \alpha \ell p/4) \leq e^{- \alpha \ell p/4} \,.
\end{eqnarray*} 
An analogous argument shows that $Pr(|N^+_{D'}(u, V_{i+1}) \cap (P_2 \setminus P_1)| \geq \alpha \ell p/4) \leq e^{- \alpha \ell p/4}$ holds as well. A union bound over the $\alpha r/40$ elements of $I_u$ shows that the probability that there exists some $i \in I_u$ for which $|N^-_{D'}(u, V_i) \cap (P_2 \setminus P_1)| \geq \alpha \ell p/4$ or $|N^+_{D'}(u, V_{i+1}) \cap (P_2 \setminus P_1)| \geq \alpha \ell p/4$ is $o(1/n)$. It follows by Property (iii) that with probability at least $1 - o(1/n)$ we have $\deg^-_{D'}(u, V_i \setminus L_2) \geq \alpha \ell p/3 - \alpha \ell p/4 \geq \alpha \ell p/12$ and $\deg^+_{D'}(u, V_{i+1} \setminus L_2) \geq \alpha \ell p/12$ for every $i \in I_u$.
\end{proof}  

Consider the path $P_2$ at the moment $|P_2 \setminus P_1| \geq \alpha \xi^3 n/10^5$ first occurs and assume that $\deg^-_{D'}(u, V_i \setminus L_2) \geq \alpha \ell p/12$ and $\deg^+_{D'}(u, V_{i+1} \setminus L_2) \geq \alpha \ell p/12$ hold at this point for every $i \in I_u$. At this point, let $X_u = |\{(x,y) \in E(P_2) : \exists i \in I_u \textrm{ such that } x \in N^-_{D'}(u, V_i) \textrm{ and } y \in N^+_{D'}(u, V_{i+1})\}|$, that is, $X_u$ is a random variable which counts some of the arcs of $P_2$ which can absorb $u$. In order to complete the proof of Lemma~\ref{goodArcs}, it suffices to prove that $Pr(X_u < 10^{-10} \alpha^4 \xi^3 p^2 n) = o(1/n)$. Let $i \in I_u$ be an arbitrary index. Let $(x_1,x_2), (x_2,x_3), (x_3,x_4)$ and $(x_4,x_5)$ be four consecutive arcs of $P_2 \setminus P_1$, where $x_m \in V_{i+m-4}$ for every $1 \leq m \leq 5$. Assume that $|P_2 \setminus P_1| \leq \alpha \xi^3 n/10^5$ was still true immediately after the random forward step $(x_4,x_5)$ was made. It follows by the description of Algorithm 3, by~\eqref{eq::smallL1} and by Parts 2 and 3 of Proposition~\ref{prop::summary} that $|V_i \cap L_2| \leq 2 \ell/3$ holds at this point for every $1 \leq i \leq r$. It follows by Properties (ii) and (v) above and by Claim~\ref{cl::largeRemainingNeighbourhood} that there exist sets $A_u^- \subseteq N^-_{D'}(u, V_i \setminus L_2)$ of size $|A_u^-| \geq (1 - \varepsilon') |N^-_{D'}(u, V_i \setminus L_2)| \geq \alpha \ell p/13$ and $A_u^+ \subseteq N^+_{D'}(u, V_{i+1} \setminus L_2)$ of size $|A_u^+| \geq (1 - \varepsilon') |N^+_{D'}(u, V_{i+1} \setminus L_2)| \geq \alpha \ell p/13$ for which all the conditions of Lemma~\ref{lem::fourSteps} are satisfied (with $Z_1 = A_u^-$, $Z_2 = A_u^+$, $X = L_2$, $q_1 = \lambda \ell p$ and $q_2 = 2 \ell p$). Therefore 
\begin{eqnarray} \label{eq::oneArc}
Pr(x_4 \in N^-_{D'}(u, V_i) \textrm{ and } x_5 \in N^+_{D'}(u, V_{i+1})) &\geq& Pr(x_4 \in A_u^- \textrm{ and } x_5 \in A_u^+) \nonumber\\
&\geq& \frac{|A_u^-| |A_u^+|}{2 \ell^2} \geq \frac{(\alpha \ell p/13)^2}{2 \ell^2} \geq \alpha^2 p^2/400 \,.  
\end{eqnarray} 

Let $Y_u^j : j \in I_u$ be independent random variables, where $Y_u^j \sim Bin \left(\frac{\alpha \xi^3 n}{2 \cdot 10^5 r}, \frac{\alpha^2 p^2}{400} \right)$ for every $j \in I_u$. Let $Y_u = \sum_{j \in I_u} Y_u^j$, then $Y_u \sim Bin \left(\frac{\alpha^2 \xi^3 n}{8 \cdot 10^6}, \frac{\alpha^2 p^2}{400} \right)$. We claim that $X_u$ dominates $Y_u$, that is, that $Pr(X_u < K) \leq Pr(Y_u < K)$ for every $K$. Indeed, note that the inequality~\eqref{eq::oneArc} holds regardless of the choice of $x_1$ (as long as it is a nice vertex, it is not bad of type I with respect to $V_{i-2}, V_{i-1}$ and $i \in I_u$). Therefore, whenever we add to $P_2$ an arc $(x,y)$, where $x \in V_j$ and $y \in V_{j+1}$ for some $j \in I_u$, we can imagine that a coin is tossed with the probability of \emph{success}, that is, the probability that $x \in N^-_{D'}(u, V_j)$ and $y \in N^+_{D'}(u, V_{j+1})$, being at least $\alpha^2 p^2/400$. Moreover, for every $j \in I_u$, we consider all arcs $(x,y) \in E_{D'}(V_j, V_{j+1})$, added to $P_2$ during Stage 2 until $|P_2 \setminus P_1| \geq \alpha \xi^3 n/10^5$ first occurred. Hence we consider at least $\frac{\alpha \xi^3 n/10^5}{r} - 1 \geq \frac{\alpha \xi^3 n}{2 \cdot 10^5 r}$ arcs $(x,y)$ such that $x \in V_j$ and $y \in V_{j+1}$, that is, there are at least $\frac{\alpha \xi^3 n}{2 \cdot 10^5 r} \cdot |I_u| = \frac{\alpha^2 \xi^3 n}{8 \cdot 10^6}$ trials. Hence, it follows by Theorem~\ref{th::Chernoff} (i) that
\begin{eqnarray*}
Pr \left(X_u < 10^{-10} \alpha^4 \xi^3 p^2 n \right)
&\leq& Pr \left(Y_u < 10^{-10} \alpha^4 \xi^3 p^2 n \right)
\leq Pr(Y_u \leq \mathbb{E}(Y_u)/2) \\
&\leq& \exp \left\{- \frac{1}{8} \cdot \frac{\alpha^4 \xi^3 p^2 n}{32 \cdot 10^8} \right\}
= o(1/n) \,.
\end{eqnarray*}

Since $u \in V \setminus P_2$ was arbitrary, it follows by a union bound argument that a.a.s. Claim~\ref{cl::largeRemainingNeighbourhood}, and thus also $X_u < 10^{-10} \alpha^4 \xi^3 p^2 n$, hold for every $u \in V \setminus P_2$.    
\end{proof}

\subsection{Stages 3 and 4: Closing the path into a cycle and absorbing all remaining vertices} \label{subsec::stage34}

This subsection consists of two parts, namely Stage 3 and Stage 4. In Stage 3 we will close the path $P_2$ which was built in Stage 2 into a cycle. In Stage 4 we will use Lemma~\ref{goodArcs} and Lemma~\ref{lem::matchingStage4} to absorb all of the remaining vertices into $C$ thus creating a Hamilton cycle. 

\textbf{Stage 3:} In this stage we close $P_2$ into a directed cycle $C$, by adding a few more arcs. Throughout this stage, we denote the current path by $P_3$ and let $L_3 = T_1 \cup P_3$. Initially $P_3 = P_2$. 

\begin{algorithm}
\caption{Close $P_2$ into a cycle $C$}
\begin{algorithmic} 
\STATE $x \leftarrow$ last vertex added to $P_2$ in Stage 2
\WHILE{$i(x) \neq r-3$}
\STATE $x \leftarrow$ standard forward step from $x$ with respect to $L_3 \cup A_0$
\ENDWHILE
\STATE make closing step from $x$ to $v_0$ with respect to $L_3$. 
\end{algorithmic}
\end{algorithm}

\bigskip

It is evident that, if it works, this algorithm returns a cycle $C$ of $D'$. It thus remains to prove the correctness of the algorithm. The existence of the required standard forward steps follows from Lemma~\ref{lem::stepsExist} since the last vertex added to $P_2$ in Stage 2 was nice, $T_1 \subseteq L_3$ and $|V_i \setminus L_3| \geq |V_i \setminus L_2| - 2 > 2 \varepsilon' \ell$ holds for every $1 \leq i \leq r$ throughout this stage. Since, moreover, $A_0 \subseteq N_{D'}^-(v_0, V_r \setminus L_3)$ is of size $\lambda \ell p$, the conditions of Lemma~\ref{lem::closingStep} are met as well; this proves the existence of the required closing step.

\textbf{Stage 4:} 
In this final stage, we extend the directed cycle $C$, built in Stage 3, to a Hamilton cycle of $D'$ by absorbing all remaining vertices. Let $t = |V \setminus V(C)|$ and let $H$ denote 
the bipartite graph with bipartition $V(H) = (V \setminus V(C)) \cup E(C)$ in which a vertex $u \in V \setminus V(C)$ is connected by an edge of $H$ to an arc $(x,y) \in E(C)$ if and only 
if $(x,u) \in E(D')$ and $(u,y) \in E(D')$.

\begin{algorithm}[H]
\caption{Extend $C$ to a Hamilton cycle}
\begin{algorithmic} 
\STATE ${\mathcal M} \leftarrow \{\{u_j, (x_j, y_j)\} : 1 \leq j \leq t\}$ a matching of $H$
\FOR{$j=1$ \TO $t$}
\STATE $C \leftarrow (C \setminus \{(x_j, y_j)\}) \cup \{(x_j, u_j), (u_j, y_j)\}$ 
\ENDFOR
\end{algorithmic}
\end{algorithm}

\bigskip

It is evident that, if it works, this algorithm returns a Hamilton cycle of $D'$. Thus, in order to complete the proof of Theorem~\ref{directedHam} it remains to prove that $H$ admits a matching of size $t$. It follows by the description of Stage 2 that $\left||V_i \cap (P_2 \setminus P_1)| - |V_j \cap (P_2 \setminus P_1)|\right| \leq 1$ holds for every $1 \leq i,j \leq r$. 
Hence, at the end of Stage 2 we have $3 \varepsilon' \ell \leq |V_s \setminus L_2| \leq 1 + 3 \varepsilon' \ell + 
\max \{|V_i \cap P_1| : 1 \leq i \leq r\} \leq 4 \varepsilon' \ell$ for every $1 \leq s \leq r$, where the last inequality holds by Part 3 of Proposition~\ref{prop::summary} and since $\rho \ll \varepsilon'$. Therefore 
\begin{eqnarray} \label{eq::smallMatching}
t &\leq& |V \setminus \bigcup_{i=1}^r V_i| + \sum_{i=1}^r (|V_i \setminus L_2| + |V_i \cap T_1| + |V_i \cap A_0|) \leq \varepsilon n + 2 \sqrt{\varepsilon}k \cdot \ell + r (4 \varepsilon' \ell + 2 \rho \ell + \lambda \ell p) \nonumber \\ 
&\leq& 10 \sqrt{\varepsilon'} n \leq 10^{-11} \alpha^4 \xi^3 n \,,
\end{eqnarray}
where the last inequality holds since $\varepsilon' \ll \alpha^8 \xi^6$.

Since every non-empty subset of $E(C)$ spans a digraph with maximum out-degree 1 and maximum in-degree 1, it follows by~\eqref{eq::smallMatching} and by Lemma~\ref{lem::edgeDistribution} (with $\alpha = t/n$ and $\beta = 10^{-10} \alpha^4 \xi^3$) that a.a.s. $e_H(X,Y) < 10^{-10} \alpha^4 \xi^3 p^2 n |X|$ for every $X \subseteq V \setminus V(C)$ and $Y \subseteq E(C)$ such that $|X| = |Y|$. Moreover, it follows by Lemma~\ref{goodArcs} that a.a.s. $\deg_H(u) \geq 10^{-10} \alpha^4 \xi^3 p^2 n$ holds for every $u \in V \setminus V(C)$. We conclude that a.a.s. $H$ satisfies the conditions of Lemma~\ref{lem::matchingStage4} (with $A = V \setminus V(C)$, $B = E(C)$ and $\delta = 10^{-10} \alpha^4 \xi^3 p^2 n$) and thus a.a.s. there exists a matching of $H$ which saturates $V \setminus V(C)$.

\section{Concluding remarks and open problems}
\label{sec::openprob}

We have proved that a.a.s. $(1/2 - \alpha) n p \leq r_{\ell}({\mathcal D}(n,p), {\mathcal H}) \leq (1/2 + \alpha) n p$, where $\alpha > 0$ is an arbitrarily small constant, provided that $p \gg \log n/ \sqrt{n}$. For \emph{undirected} random graphs it was proved in~\cite{LS} that $r_{\ell}(G(n,p), {\mathcal H}) = (1/2 + o(1)) n p$ holds a.a.s. for every $p \gg \log n/n$. This is essentially tight since for $p < \log n/n$ a.a.s. $G(n,p)$ contains no Hamilton cycle. Since it is also known (see~\cite{McDiarmid} and~\cite{Frieze}) that for $p = \Omega(\log n/n)$ directed random graphs are a.a.s. Hamiltonian, it is natural to ask the following question.

\begin{question} 
\label{sparseRandomDigraph} 
Is it true that for $p \gg \log n/n$, a.a.s. every subdigraph of ${\mathcal D}(n,p)$ with minimum out-degree and in-degree at least $(1/2 + o(1)) n p$ contains a directed Hamilton cycle?
\end{question}

Recall that our proof of the upper bound in Theorem~\ref{directedHam} does hold for every $p \gg \log n/n$. On the other hand, since our proof method for the lower bound relies on the existence of linearly many pairwise arc disjoint triangles in ${\mathcal D}(n,p)$, each sharing one arc with a given cycle (recall Stage 4), it cannot be used when $p = o(n^{- 1/2})$, and hence some new ideas are required.

\vspace{0.25cm}
\noindent
{\bf Acknowledgment.} We would like to thank Asaf Ferber for many stimulating discussions.

\end{document}